%% file: main.tex
\documentclass{article}
\usepackage{amsmath, amsfonts, amsthm, amssymb}
\usepackage{enumitem}
\usepackage{graphicx} 
\usepackage{geometry}
\usepackage[english]{babel}
\usepackage[affil-it]{authblk}
\usepackage[T1]{fontenc}
\usepackage[utf8]{inputenc}
\usepackage{lmodern}
\usepackage{tikz}
\usetikzlibrary{arrows.meta,positioning}
\usepackage{booktabs}
\usepackage{tabularx}
\usepackage{caption}
\usepackage{import}
\usepackage{bbm}
\usepackage{hyperref}
\title{Almost Disjointness Principles and $Q$-Space Cardinals}
\date{}
\theoremstyle{definition}
\newtheorem{definition}{Definition}
\newtheorem{prob}[definition]{Problem}
\newtheorem{thm}[definition]{Theorem}
\newtheorem{claim}{Claim}[definition]
\newtheorem{lemma}[definition]{Lemma}
\newtheorem{proposition}[definition]{Proposition}

\DeclareMathOperator{\dom}{dom}
\DeclareMathOperator{\Col}{Col}
\DeclareMathOperator{\htt}{ht}
\author{Vinicius de Oliveira Rodrigues\thanks{Electronic address: \texttt{vinior@ime.usp.br}}}
\affil{University of São Paulo}

\begin{document}\maketitle

\begin{abstract}
    Banakh and Bazylevych introduced separation-axiom variants $\mathfrak q_i$, for $i=1,2,2\frac{1}{2}$, of the cardinal $\mathfrak q$, together with a cardinal $\mathfrak{adp}$ lying between $\mathfrak{dp}$ and $\mathfrak{ap}$.
    They asked whether $\mathfrak{adp}$ coincides with either of these two cardinals.
    We prove in ZFC that $\mathfrak{adp}=\mathfrak{dp}$.
    We define a dual variant $\mathfrak{adp}_2$ and show that $\mathfrak{adp}_2=\mathfrak{ap}$.

    We further study the relation between $\mathfrak{ap}$ and the weakened $Q$-space cardinals.
    We introduce a tree analogue $\mathfrak{at}$ of $\mathfrak{ap}$ and prove $\mathfrak{at}=\mathfrak q_{2}$, concluding that $\mathfrak{ap}\leq\mathfrak q_{2}$, which answers a question of Banakh and Bazylevych.

    Assuming the Generalized Continuum Hypothesis, we construct ccc forcing extensions with $\mathfrak{ap}=\omega_1<\mathfrak{at}=\mathfrak c$, so $\mathfrak{ap}<\mathfrak q_2$ is consistent with ZFC.

    MSC: Primary 03E17, 03E05. Secondary 03E35, 54A35, 54D10.
\end{abstract}

\subimport{sections/}{introduction.tex}
\subimport{sections/}{adp.tex}
\subimport{sections/}{basics.tex}
\subimport{sections/}{diagonal.tex}
\subimport{sections/}{forcing.tex}
\subimport{sections/}{conclusion.tex}
\bibliographystyle{plain}
\bibliography{includes/biblio.bib}
\end{document}

%% file: sections/introduction.tex
\section{Introduction}

Cardinal characteristics of the continuum are cardinals between $\aleph_1$,
the first uncountable cardinal, and $\mathfrak c$, the cardinality of the
continuum. They measure combinatorial properties of the real numbers; see
\cite{blass2009combinatorial} for background.

The symbol $\omega$ denotes the set of natural numbers, which includes $0$ and is identified with the first infinite ordinal.

The cardinal characteristics $\mathfrak{ap}$ and $\mathfrak{dp}$, both related
to the pseudointersection number $\mathfrak p$, were studied by Brendle in
\cite{brendle1999dow}. We recall the relevant definitions.

\begin{itemize}
    \item For a set $S$, $[S]^{<\omega}$ denotes the collection of all finite subsets of $S$, and $[S]^\omega$ denotes the collection of all countably infinite subsets of $S$.
    \item Sets $a,b\in[\omega]^{\aleph_0}$ are orthogonal, written $a\perp b$, if $a\cap b$ is finite.
    \item Collections $\mathcal A,\mathcal B\subseteq[\omega]^{\aleph_0}$ are orthogonal, written $\mathcal A\perp\mathcal B$, if $a\perp b$ for every $a\in\mathcal A$ and $b\in\mathcal B$.
    \item If $\mathcal A,\mathcal B\subseteq[\omega]^{\aleph_0}$, then $\mathcal A$ is weakly separated from $\mathcal B$ if there exists $X\subseteq\omega$ such that $a\not\perp X$ for every $a\in\mathcal A$, and $b\perp X$ for every $b\in\mathcal B$. In this case, we say that $X$ weakly separates $(\mathcal A,\mathcal B)$.
    \item $\mathcal A\subseteq[\omega]^{\aleph_0}$ is an almost disjoint family if $a\perp b$ for every two distinct $a,b\in\mathcal A$.
\end{itemize}

Our convention regarding the order of weak separation follows Banakh and Bazylevych \cite{banakh2023q}.  It is the reverse of Brendle's convention for ordered pairs \cite{brendle1999dow}.

Since weak separation is not symmetric, the order of the two families matters in
the following definitions.

\begin{itemize}
    \item $\mathfrak{dp}$ is the least cardinal $\kappa$ for which there exist $\mathcal A,\mathcal B\subseteq[\omega]^{\aleph_0}$ such that $\mathcal A\perp\mathcal B$, $|\mathcal A\cup\mathcal B|=\kappa$, and $\mathcal A$ is not weakly separated from $\mathcal B$.
    \item $\mathfrak{ap}$ is the least cardinal $\kappa$ for which there exist $\mathcal A,\mathcal B\subseteq[\omega]^{\aleph_0}$ such that $\mathcal A\perp\mathcal B$, $|\mathcal A\cup\mathcal B|=\kappa$, $\mathcal A\cup\mathcal B$ is an almost disjoint family, and $\mathcal A$ is not weakly separated from $\mathcal B$.
\end{itemize}

It is immediate that $\mathfrak{dp}\leq\mathfrak{ap}$.
This inequality can be strict: Brendle showed by forcing that
$\mathfrak{dp}<\mathfrak{ap}$ is consistent \cite{brendle1999dow}.
Both cardinals are greater than or equal to $\mathfrak p$, and consistently
strictly greater than $\mathfrak p$ \cite{brendle1999dow}. Here $\mathfrak p$ is the least
cardinality of a centered family of infinite subsets of $\omega$ with no
pseudointersection; that is, every nonempty finite subfamily has infinite
intersection, but there is no single infinite set $P$ such that
$P\setminus A$ is finite for every member $A$ of the family.

If $X$ is a topological space and $Y\subseteq X$, we say that $Y$ is a $G_\delta$ subset of $X$ if there are open subsets $(U_n)_{n \in \omega}$ of $X$ such that $Y=\bigcap_{n<\omega} U_n$.
Dually, $Y$ is an $F_\sigma$ subset of $X$ if it is a countable union of closed subsets of $X$.
We say that $X$ has a $G_\delta$-diagonal if the set $\Delta_X=\{(x,x):x\in X\}$ is a $G_\delta$ as a subset of $X^2$.

A $Q$-set is classically defined as an uncountable set of reals all of whose subsets are
$G_\delta$ sets in the subspace topology (equivalently, all of its subsets are
$F_\sigma$ sets in the subspace topology) \cite{miller1984special,fleissner1980qsets}. The existence of $Q$-sets is independent of the Zermelo--Fraenkel set theory with the axiom of choice (ZFC) \cite{fleissner1980qsets,miller1984special}. Indeed, if
there is a $Q$-set, then it has at least $2^{\aleph_1}$ subsets, all realized as
traces of $G_\delta$ subsets of the reals. Since there are only $\mathfrak c$
many $G_\delta$ subsets of the reals, no $Q$-set can exist under the Continuum
Hypothesis (CH). On the other hand, every set of reals of cardinality less than
$\mathfrak{ap}$ has all of its subsets $G_\delta$ in the subspace topology
\cite{brendle1999dow}; hence, if
$\aleph_1<\mathfrak{ap}=\mathfrak c$, as happens for example under Martin's
Axiom together with the negation of CH \cite{blass2009combinatorial}, then $Q$-sets
exist.

$Q$-sets are also related to the separable case of the Normal Moore Space
Conjecture: a non-metrizable separable normal Moore space exists if and only if
there is a $Q$-set \cite{heath1964screenability}.

The smallest cardinality of a non-$Q$-set is denoted by $\mathfrak q$
\cite{miller1984special,brendle1999dow}.
More generally, a $Q$-space is a topological space in which every subset is a
$G_\delta$ set. In this terminology, $\mathfrak q$ is the least cardinality of
a metrizable separable non-$Q$-space, equivalently, of a regular second-countable
non-$Q$-space.

These notions have been studied in more general settings. There are two natural
directions to pursue: weaken regularity to weaker separation axioms, or weaken
second countability. In \cite{banakh2023q}, Banakh and Bazylevych followed the
first direction and defined the cardinals $\mathfrak q_1$, $\mathfrak q_2$ and
$\mathfrak q_{2\frac{1}{2}}$ by considering second-countable non-$Q$-spaces
which are, respectively, $T_1$, Hausdorff and Urysohn. Recall that a topological
space is $T_1$ if every singleton is closed; Hausdorff, or $T_2$, if every two
distinct points have disjoint neighborhoods; and Urysohn, or
$T_{2\frac{1}{2}}$, if every two distinct points have disjoint closed
neighborhoods.

Banakh and Bazylevych also introduced a cardinal $\mathfrak{adp}$, which lies
between $\mathfrak{dp}$ and $\mathfrak{ap}$ and is useful in the study of
$Q$-spaces.

\begin{definition}
    $\mathfrak{adp}$ is the least cardinal $\kappa$ for which there exist $\mathcal A,\mathcal B\subseteq[\omega]^{\aleph_0}$ such that $\mathcal A\perp\mathcal B$, $|\mathcal A\cup\mathcal B|=\kappa$, $\mathcal A$ is an almost disjoint family, and $\mathcal A$ is not weakly separated from $\mathcal B$.
\end{definition}

They asked the following.

\begin{prob}[{\cite[Problem 12]{banakh2023q}}] Is $\mathfrak{adp}=\mathfrak{dp}$? Is $\mathfrak{adp}=\mathfrak{ap}$?
\end{prob}

In Section~\ref{sec:adp}, we answer the first part by proving
$\mathfrak{adp}=\mathfrak{dp}$ in ZFC. Together with the consistency of
$\mathfrak{dp}<\mathfrak{ap}$, this shows that
$\mathfrak{adp}=\mathfrak{ap}$ is not provable in ZFC.

The asymmetry of weak separation also suggests asking what happens if the
almost disjointness requirement in the definition of $\mathfrak{adp}$ is imposed
on $\mathcal B$ instead of on $\mathcal A$.

\begin{definition}
    $\mathfrak{adp}_2$ is the least cardinal $\kappa$ for which there exist $\mathcal A,\mathcal B\subseteq[\omega]^{\aleph_0}$ such that $\mathcal A\perp\mathcal B$, $|\mathcal A\cup\mathcal B|=\kappa$, $\mathcal B$ is an almost disjoint family, and $\mathcal A$ is not weakly separated from $\mathcal B$.
\end{definition}

Using the same technique, we show that $\mathfrak{adp}_2=\mathfrak{ap}$.
Thus, in every model in which $\mathfrak{dp}<\mathfrak{ap}$, this modified
cardinal is distinct from $\mathfrak{dp}$.

Banakh and Bazylevych also asked:

\begin{prob}[{\cite[Problem 11(1)]{banakh2023q}}] Is $\mathfrak{ap}\leq\mathfrak q_2$?
\end{prob}

In Section~\ref{sec:hausdorff-q-spaces}, we provide a positive answer.
We introduce a tree analogue of
$\mathfrak{ap}$, denoted by $\mathfrak{at}$. This cardinal is obtained by
replacing almost disjoint families of subsets of $\omega$ with almost
disjoint families of infinite subtrees of an $\omega$-tree. It satisfies
$\mathfrak{ap}\leq\mathfrak{at}$, and we show that $\mathfrak{at}=\mathfrak q_2$ (Theorem~\ref{thm:at-equals-q2}), yielding $\mathfrak{ap}\leq\mathfrak q_2$.

In Section~\ref{sec:small-diagonals}, we define the variants
$\mathfrak q_{i\Delta}$ obtained by additionally requiring a
$G_\delta$-diagonal. We prove
$\mathfrak q_{1\Delta}=\mathfrak{at}=\mathfrak q_2$ and
$\mathfrak q_{2\frac{1}{2}\Delta}=\mathfrak q_{2\frac{1}{2}}$.

Finally, in Section~\ref{sec:ap-below-at-model}, we show that the tree cardinal
$\mathfrak{at}$ is not simply another presentation of $\mathfrak{ap}$.
Assuming the Generalized Continuum Hypothesis (GCH), for every regular cardinal $\lambda>\omega_1$ we construct a ccc
forcing extension in which
\[
    \mathfrak{ap}=\omega_1<
    \mathfrak{at}=\mathfrak q_{2}=\mathfrak c=\lambda.
\]
In particular, $\mathfrak{ap}<\mathfrak q_2$ is consistent.
The construction is based on Brendle's model for $\mathfrak{ap}<\mathfrak q$.

\subsection*{Summary of results}

We end the introduction with a summary of the main results of this paper.

\begin{itemize}
    \item We prove $\mathfrak{adp}=\mathfrak{dp}$ in ZFC (Theorem~\ref{thm:adp-equals-dp}), answering the first question in \cite[Problem~12]{banakh2023q}. We define a dual variant, $\mathfrak{adp}_2$, and we prove $\mathfrak{adp}_2=\mathfrak{ap}$ (Theorem~\ref{thm:adp2}).

    \item We introduce the tree cardinal $\mathfrak{at}$ and prove $\mathfrak{ap}\leq\mathfrak{at}=\mathfrak q_2$ (Theorem~\ref{thm:at-equals-q2}). This gives a combinatorial characterization of $\mathfrak q_2$ and answers \cite[Problem~11(1)]{banakh2023q} positively: $\mathfrak{ap}\leq\mathfrak q_2$ holds in ZFC.

    \item We determine the effect of requiring a $G_\delta$-diagonal in the definition of $Q$-space cardinals. We prove $\mathfrak q_{1\Delta}=\mathfrak{at}=\mathfrak q_2$ (Theorem~\ref{thm:q1delta-equals-at}). We also show that every second-countable Urysohn space has a $G_\delta$-diagonal, yielding $\mathfrak q_{2\frac{1}{2}\Delta}=\mathfrak q_{2\frac{1}{2}}$ (Lemma~\ref{lem:second-countable-urysohn-gdelta-diagonal}).

    \item We show that the inequality $\mathfrak{ap}\leq\mathfrak q_2$ is consistently strict. Assuming GCH, for every regular $\lambda>\omega_1$ we construct a ccc forcing extension with $\mathfrak{ap}=\omega_1<\mathfrak{at}=\mathfrak q_2=\mathfrak q_{2\frac{1}{2}}=\mathfrak c=\lambda$ (Theorem~\ref{thm:consistent-ap-below-at}). Thus the inequality answering \cite[Problem~11(1)]{banakh2023q} cannot be strengthened to an equality in ZFC.
\end{itemize}

%% file: sections/adp.tex
\section{On $\mathfrak{adp}$ and $\mathfrak{adp}_2$}\label{sec:adp}

We will need the following result, which is a particular case of \cite[Theorem 2.1]{zbMATH03494419}.
For completeness, we sketch a direct proof for this specific case. Recall that almost disjoint families of cardinality $\mathfrak c$ exist: for instance, for each $x \in 2^\omega$, consider $a_x=\{x\restriction n: n \in \omega\}$.
Then $\{a_x: x \in 2^{\omega}\}$ is an almost disjoint family over the infinite countable set of all finite binary sequences.

\begin{thm}\label{maintool}
    Let $\lambda<\mathfrak c$ and $(a_\alpha: \alpha<\lambda)$ be a family in $[\omega]^{\aleph_0}$.
    Then there exists a family $(a'_\alpha: \alpha<\lambda)$ such that $a'_\alpha\in[a_\alpha]^{\aleph_0}$ for every $\alpha<\lambda$, and $a'_\alpha\perp a'_\beta$ whenever $\alpha\neq\beta$.
\end{thm}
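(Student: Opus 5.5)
The plan is a transfinite recursion on $\alpha<\lambda$. At stage $\alpha$ we choose $a'_\alpha\in[a_\alpha]^{\aleph_0}$ almost disjoint from every $a'_\beta$ with $\beta<\alpha$. The cardinality hypothesis $\lambda<\mathfrak c$ should enter through a counting argument: at stage $\alpha$ we will have $\mathfrak c$ many pairwise almost disjoint candidate subsets of $a_\alpha$, and only $|\alpha|<\mathfrak c$ earlier sets to avoid. So the real content is to arrange that each earlier set rules out only a few candidates.

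First fix a bijection between $\omega$ and $2^{<\omega}$, and work inside the tree $2^{<\omega}$. Say an infinite $A\subseteq 2^{<\omega}$ \emph{converges} to a branch $y\in 2^\omega$ if for every $n$, almost every element of $A$ extends $y\restriction n$. By a König-type compactness argument, every infinite $A$ has an infinite subset converging to some branch. Two sets converging to distinct branches are almost disjoint.

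For the candidate families I would try to show two things. First, if $a_\alpha$ contains an infinite antichain whose limit points in $2^\omega$ contain a perfect set, then there are $\mathfrak c$ many infinite subsets of $a_\alpha$ converging to pairwise distinct branches. Second, each earlier $a'_\beta$, having been chosen convergent to a single branch $y_\beta$, is almost disjoint from all but at most one of these candidates: a subset converging to $y\neq y_\beta$ meets $a'_\beta$ only finitely. In this case we pick a branch $y_\alpha\notin\{y_\beta:\beta<\alpha\}$, which is possible since $|\alpha|<\mathfrak c$, and let $a'_\alpha$ be a subset of $a_\alpha$ converging to $y_\alpha$. This preserves the invariant that every $a'_\beta$ converges to its own branch $y_\beta$, with the $y_\beta$ pairwise distinct, and that invariant already gives almost disjointness.

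The main obstacle is the degenerate case, where $a_\alpha$ has only countably many limit branches in $2^\omega$. For example, $a_\alpha$ might be contained in a single branch, or be an antichain converging to one branch. Then $a_\alpha$ alone does not supply $\mathfrak c$ many branch-convergent candidates. My first attempt would be to make the coding depend on $\alpha$: re-identify $a_\alpha$ with a fresh copy of $2^{<\omega}$, take the branch family $\{c^\alpha_x:x\in2^\omega\}$ inside $a_\alpha$, and strengthen the invariant to the following. For $\beta<\alpha$, the set $a'_\beta\cap a_\alpha$ is either finite or has infinite intersection with at most one $c^\alpha_x$. Verifying this stronger invariant is where I expect the real difficulty. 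If it cannot be secured directly, I would fall back on the general statement \cite[Theorem 2.1]{zbMATH03494419}. That result says every family of fewer than $\mathfrak c$ infinite sets admits an almost disjoint refinement. I would then only check that its hypotheses specialize to the present countable setting.
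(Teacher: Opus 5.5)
Your proposal has a genuine gap, and it goes beyond an unverified invariant. The core scheme cannot work: a forward recursion in which $a'_\alpha$ is chosen knowing only $a_\alpha$ and the earlier $a'_\beta$ gets stuck.

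First, the invariant from your nondegenerate case (each $a'_\beta$ converges to its own branch $y_\beta$, all distinct) is unattainable for degenerate sets. Take $b=\{y\restriction n:n\in\omega\}$ and $a_0=a_1=b$. Every infinite subset of either set converges to $y$, yet the conclusion is trivial: split $b$ into two infinite pieces.

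Worse, take $a_0=2^{<\omega}$ and $a_1=b$. Your rule allows $y_0=y$ and $a'_0=b$. After that, no infinite subset of $a_1$ is almost disjoint from $a'_0$. In general, whenever a later $a_\gamma$ is almost contained in an earlier $a'_\beta$, stage $\gamma$ has no legal choice. So the choice of $a'_\beta$ must take the later sets $a_\gamma$, $\gamma>\beta$, into account.

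Your per-$\alpha$ recoding (an almost disjoint family $\{c^\alpha_x\}$ inside each $a_\alpha$) is the right raw material. But the invariant ``$a'_\beta\cap a_\alpha$ meets at most one $c^\alpha_x$ infinitely'' constrains $a'_\beta$ against all later $\alpha$ at once, and you give no way to arrange it. If $a_\alpha\supseteq a'_\beta$ and the $\alpha$-coding is unrelated to the $\beta$-coding, it can simply fail. Repairing it by shrinking $a'_\beta$ for uncountably many $\alpha$ simultaneously is a pseudo-intersection-type problem with no evident ZFC solution. Falling back on the general refinement theorem is legitimate as a reference; the paper itself notes the statement is a special case of it. But then your argument contributes no proof of its own.

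The missing idea is a global bookkeeping step that does the look-ahead in advance.
\begin{itemize}
\item The paper fixes, for each $\alpha$, an almost disjoint family $(b^\alpha_\xi:\xi<\lambda^+)$ on $a_\alpha$. Only $\lambda^+\le\mathfrak c$ members are needed.
\item It refines $a_\delta$ not with its own family but with the family of index $F(\delta)$. This is the least $\beta\le\delta$ such that $\lambda^+$ many $b^\beta_\xi$ meet $a_\delta$ infinitely.
\item By minimality, for $\beta<F(\gamma)$ at most $\lambda$ many $b^\beta_\xi$ meet $a_\gamma$ infinitely. Hence the set $Y$ of indices $\xi$ with $b^\beta_\xi\not\perp a_\gamma$ for some $\gamma$ with $F(\gamma)>\beta$ has size at most $\lambda$.
\item For each $\delta$ with $F(\delta)=\beta$, choose $\xi_\delta\notin Y$ with $b^\beta_{\xi_\delta}\not\perp a_\delta$, pairwise distinct. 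A recursion of length $\le\lambda$ suffices, since $\lambda^+$ candidates are available. Then set $a'_\delta=a_\delta\cap b^\beta_{\xi_\delta}$.
\end{itemize}
Two sets with the same $F$-value lie in distinct members of one almost disjoint family. If $F(\delta)<F(\gamma)$, then $\xi_\delta\notin Y$ gives $b^{F(\delta)}_{\xi_\delta}\perp a_\gamma$. The count $|Y|\le\lambda<\lambda^+$ replaces your ``each earlier set excludes at most one candidate''. It is computed from the whole family $(a_\alpha:\alpha<\lambda)$ rather than from earlier choices, which is exactly what your recursion lacks.
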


\begin{proof}
    We may assume that $\lambda$ is infinite.
    
    For each $\alpha<\lambda$, let $\mathcal A_\alpha=(b^\alpha_\xi: \xi<\lambda^+)$ be an injective indexing of an almost disjoint family of cardinality $\lambda^+$ over the infinite countable set $a_\alpha$.

    Let $F:\lambda \rightarrow \lambda$ be such that $F(\alpha)=\min\{\beta\leq \alpha: |\{\xi<\lambda^+: b^\beta_\xi\not\perp a_\alpha\}|=\lambda^+\}$. Let $Y=\bigcup_{\alpha, \beta<\lambda}\{\xi<\lambda^+: b_\xi^\beta\not\perp a_\alpha\,\wedge\, F(\alpha)>\beta\}$. By the minimality of $F(\alpha)$, the regularity of $\lambda^+$, and the fact that $\lambda$ is infinite, $|Y|\leq \lambda$.
    
    For each $\alpha<\lambda$, let $X_\alpha=F^{-1}[\{\alpha\}]$. Recursively define, for $\alpha<\lambda$, an injective $G_\alpha: X_\alpha\rightarrow \lambda^+$ and $(a_\delta': \delta\in X_\alpha)$ so that $G_\alpha(\delta)\in \{\xi<\lambda^+: b_\xi^\alpha\not \perp a_\delta\}\setminus Y$ and $a_\delta'=a_\delta\cap b^{\alpha}_{G_\alpha(\delta)}$.
    This is possible because, for each $\delta\in X_\alpha$, the set $\{\xi<\lambda^+: b_\xi^\alpha\not \perp a_\delta\}$ has size $\lambda^+$, while at each stage at most $\lambda$ indices have been excluded.
    If $\delta,\gamma\in X_\alpha$ are distinct, then the injectivity of $G_\alpha$ and the almost disjointness of $\mathcal A_\alpha$ give $a'_\delta\perp a'_\gamma$.
    If $\delta\in X_\alpha$, $\gamma\in X_\beta$, and $\alpha<\beta$, then $G_\alpha(\delta)\notin Y$ implies $b^\alpha_{G_\alpha(\delta)}\perp a_\gamma$, and hence $a'_\delta\perp a'_\gamma$; the case $\beta<\alpha$ is symmetric.
\end{proof}

Now we are ready to prove the two equalities announced in the introduction.
First, we settle Problem 12 of \cite{banakh2023q}.

\begin{thm}\label{thm:adp-equals-dp}
     In ZFC,
    $\mathfrak{dp}=\mathfrak{adp}$.
\end{thm}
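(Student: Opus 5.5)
Since every witness for $\mathfrak{adp}$ is also a witness for $\mathfrak{dp}$, we immediately get $\mathfrak{dp}\leq\mathfrak{adp}$. The whole task is therefore the reverse inequality. My plan is to take a witness $(\mathcal A,\mathcal B)$ for $\mathfrak{dp}$ and use Theorem~\ref{maintool} to shrink the members of $\mathcal A$ until they form an almost disjoint family, while preserving non-separation.

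First I dispose of the case $\mathfrak{dp}=\mathfrak c$. Here $\mathfrak{adp}\leq\mathfrak c$ trivially, because $\mathfrak{adp}\leq\mathfrak{ap}\leq\mathfrak c$; to check this I would use a MAD family split into two disjoint parts, which cannot be weakly separated. So from now on assume $\kappa=\mathfrak{dp}<\mathfrak c$, and fix $\mathcal A\perp\mathcal B$ with $|\mathcal A\cup\mathcal B|=\kappa$ and $\mathcal A$ not weakly separated from $\mathcal B$. Enumerate $\mathcal A=(a_\alpha:\alpha<\lambda)$ with $\lambda\leq\kappa<\mathfrak c$. Theorem~\ref{maintool} gives pairwise almost disjoint $a'_\alpha\in[a_\alpha]^{\aleph_0}$. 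Let $\mathcal A'=\{a'_\alpha:\alpha<\lambda\}$. It is almost disjoint, it is orthogonal to $\mathcal B$ because $a'_\alpha\subseteq a_\alpha$, and $|\mathcal A'\cup\mathcal B|\leq\kappa$.

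The difficulty is that a single shrinking may destroy non-separation: a set $X$ that meets every $a'_\alpha$ infinitely also meets every $a_\alpha$ infinitely, so separation of $(\mathcal A',\mathcal B)$ implies separation of $(\mathcal A,\mathcal B)$, but not conversely. To handle this, I would shrink many times at once. Replace $\mathcal A$ by the family
\[
\mathcal A^*=\{a\cap Y: a\in\mathcal A,\ Y\in\mathcal Y,\ |a\cap Y|=\aleph_0\}
\]
for a suitable family $\mathcal Y$ of size at most $\kappa$. Alternatively, close $\mathcal A$ under enough infinite subsets that every potential separator is already witnessed. A cleaner route is to note that $X$ weakly separates $(\mathcal A,\mathcal B)$ exactly when $X\cap a$ is infinite for all $a\in\mathcal A$. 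So I would index, for each $a\in\mathcal A$, $\kappa$ many (or countably many) infinite subsets of $a$, chosen so that any $X$ which is orthogonal to $\mathcal B$ and meets all the chosen subsets must meet $a$ infinitely. This is automatic, since each chosen subset lies inside $a$. The real issue runs the other way: I need a set $X$ that meets every $a'$ to yield a contradiction.

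So the key step, and the main obstacle, is this. Given that $X$ weakly separates $(\mathcal A',\mathcal B)$, I must produce a separator for $(\mathcal A,\mathcal B)$. As computed above, this direction is immediate: $X\cap a'_\alpha$ infinite implies $X\cap a_\alpha$ infinite, and $X\perp b$ for all $b\in\mathcal B$. Therefore if $(\mathcal A,\mathcal B)$ is not separated, neither is $(\mathcal A',\mathcal B)$, which shows that $(\mathcal A',\mathcal B)$ witnesses $\mathfrak{adp}\leq\kappa$. The one remaining detail is to pad the family so that $|\mathcal A'\cup\mathcal B|=\kappa$ exactly. To do this I would add almost disjoint subsets of some $a'_\alpha$ to $\mathcal A'$; they are still orthogonal to $\mathcal B$ and still do not help any separator. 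I would double-check the monotonicity direction carefully, since it is the crux of the argument.
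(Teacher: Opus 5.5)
Your final argument is correct and is essentially the paper's proof, run directly on a $\mathfrak{dp}$-witness rather than through the contrapositive ``$\kappa<\mathfrak{adp}\Rightarrow\kappa<\mathfrak{dp}$''. You shrink $\mathcal A$ with Theorem~\ref{maintool} and observe that any separator of $(\mathcal A',\mathcal B)$ already separates $(\mathcal A,\mathcal B)$. So the worry that shrinking ``destroys non-separation'' is unfounded, and the $\mathcal A^*$ detour can simply be deleted.

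Two side remarks need correcting.

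First, no padding is needed. The $a'_\alpha$ are pairwise distinct, being infinite and pairwise almost disjoint. Also $\mathcal A'\cap\mathcal B=\emptyset$ because $\mathcal A'\perp\mathcal B$. Hence $|\mathcal A'\cup\mathcal B|=|\mathcal A\cup\mathcal B|=\kappa$. The padding you describe would in fact break almost disjointness: an infinite $c\subseteq a'_\alpha$ added alongside $a'_\alpha$ meets $a'_\alpha$ in an infinite set.

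Second, not every split of a MAD family is non-separated; a one-element part $\{a\}$ is separated by $X=a$. So $\mathfrak{adp}\leq\mathfrak{ap}\leq\mathfrak c$ needs a counting argument. Each $X$ weakly separates at most one partition $(\mathcal A,\mathcal M\setminus\mathcal A)$ of an almost disjoint family $\mathcal M$ of size $\mathfrak c$, namely the one with $\mathcal A=\{m\in\mathcal M:|m\cap X|=\omega\}$. Since there are $2^{\mathfrak c}>\mathfrak c$ partitions, some partition is not separated.
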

\begin{proof}
    It is clear that $\mathfrak{dp}\leq\mathfrak{adp}$. For the converse, assume $\kappa<\mathfrak{adp}$.
    We show that $\kappa<\mathfrak{dp}$.

    Let $\mathcal A, \mathcal B\subseteq[\omega]^{\aleph_0}$ be such that $\mathcal A\perp\mathcal B$ and $|\mathcal A\cup\mathcal B|\leq\kappa$.
    We show that $\mathcal A$ is weakly separated from $\mathcal B$.
    Let $\lambda=|\mathcal A|\leq \kappa$ and enumerate it as $\mathcal A=\{a_\alpha: \alpha<\lambda\}$.

    Since $\mathfrak{adp}\leq\mathfrak{ap}\leq\mathfrak c$, we have
    $\lambda\leq\kappa<\mathfrak{adp}\leq\mathfrak c$.
    Thus, by Theorem \ref{maintool}, there exists $(a'_\alpha: \alpha<\lambda)$ such that $a'_\alpha\in[a_\alpha]^{\aleph_0}$ and for every two distinct
    $\alpha, \beta<\lambda$, $a'_\alpha\perp a'_\beta$. Let $\mathcal A'=\{a'_\alpha: \alpha<\lambda\}$. 
    Then $\mathcal A'\perp \mathcal B$, $\mathcal A'$ is an almost disjoint family and
    $|\mathcal A'\cup\mathcal B|\leq \kappa<\mathfrak{adp}$, so $\mathcal A'$ is weakly separated from $\mathcal B$.

    Let $X\subseteq \omega$ be such that $a'_\alpha\not\perp X$ for every $\alpha<\lambda$, and $b\perp X$ for every $b\in\mathcal B$.
    Since $a'_\alpha\subseteq a_\alpha$, we have $a'_\alpha\cap X\subseteq a_\alpha\cap X$ for every $\alpha<\lambda$.
    Thus, $a_\alpha\not\perp X$ for every $\alpha<\lambda$.
    Therefore, $X$ weakly separates $\mathcal A$ from $\mathcal B$.
\end{proof}

As discussed in the introduction, it is natural to ask whether the asymmetric version of $\mathfrak{adp}$, which we named $\mathfrak{adp}_2$, is also equal to $\mathfrak{dp}$.
We show that it is equal to the well-studied cardinal $\mathfrak{ap}$.
Consequently, in every model in which $\mathfrak{dp}<\mathfrak{ap}$, this variant is distinct from $\mathfrak{dp}$.
\begin{thm}\label{thm:adp2}
    In ZFC, $\mathfrak{adp}_2=\mathfrak{ap}$.
\end{thm}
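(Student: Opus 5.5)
We prove the two inequalities separately. The inequality $\mathfrak{adp}_2\leq\mathfrak{ap}$ is immediate: any witness $(\mathcal A,\mathcal B)$ for $\mathfrak{ap}$ has $\mathcal A\cup\mathcal B$ almost disjoint. In particular $\mathcal B$ is almost disjoint, so the same pair witnesses $\mathfrak{adp}_2$. For the converse we fix $\kappa<\mathfrak{ap}$ and show $\kappa<\mathfrak{adp}_2$. So let $\mathcal A\perp\mathcal B$ with $|\mathcal A\cup\mathcal B|\leq\kappa$ and $\mathcal B$ almost disjoint; we must weakly separate $\mathcal A$ from $\mathcal B$. As in the proof of Theorem~\ref{thm:adp-equals-dp}, we note that $\kappa<\mathfrak{ap}\leq\mathfrak c$, so Theorem~\ref{maintool} is available for families of size at most $\kappa$.

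Enumerate $\mathcal A=\{a_\alpha:\alpha<\lambda\}$ with $\lambda\leq\kappa$. We want to shrink the members of $\mathcal A$ to infinite subsets that are pairwise almost disjoint, while leaving $\mathcal B$ untouched. Shrinking preserves orthogonality to $\mathcal B$: if $a'_\alpha\subseteq a_\alpha$ and $a_\alpha\perp b$, then $a'_\alpha\perp b$. It also preserves the failure of orthogonality to a separating set, exactly as in the proof of Theorem~\ref{thm:adp-equals-dp}. Concretely, we apply Theorem~\ref{maintool} to $(a_\alpha:\alpha<\lambda)$ and obtain $a'_\alpha\in[a_\alpha]^{\aleph_0}$ pairwise almost disjoint. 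Set $\mathcal A'=\{a'_\alpha:\alpha<\lambda\}$.

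We then check that $\mathcal A'\cup\mathcal B$ is an almost disjoint family. Pairs inside $\mathcal A'$ are handled by Theorem~\ref{maintool}, pairs inside $\mathcal B$ by hypothesis, and mixed pairs by $\mathcal A'\perp\mathcal B$. There is one small subtlety: a set lying in both $\mathcal A'$ and $\mathcal B$ would be almost disjoint from itself, which is impossible for an infinite set. So $\mathcal A'\cap\mathcal B=\emptyset$ automatically, and the union is genuinely almost disjoint. Since $|\mathcal A'\cup\mathcal B|\leq\kappa<\mathfrak{ap}$, there is $X$ with $a'_\alpha\not\perp X$ for all $\alpha$ and $b\perp X$ for all $b\in\mathcal B$. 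Because $a'_\alpha\subseteq a_\alpha$, we also have $a_\alpha\not\perp X$, so $X$ weakly separates $\mathcal A$ from $\mathcal B$.

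The only point needing care is the use of Theorem~\ref{maintool}, and it goes through exactly as before. The point of the dual version is that almost disjointness of $\mathcal B$ is given by hypothesis, while that of $\mathcal A$ can be manufactured by shrinking. This explains the contrast with $\mathfrak{adp}$: there $\mathcal B$ cannot be made almost disjoint by shrinking without breaking $b\perp X$, so $\mathfrak{adp}$ collapses to $\mathfrak{dp}$ rather than to $\mathfrak{ap}$.
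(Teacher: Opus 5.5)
Your proposal is correct and matches the paper's proof essentially step for step. You shrink $\mathcal A$ via Theorem~\ref{maintool} to a pairwise almost disjoint $\mathcal A'$, note that $\mathcal A'\cup\mathcal B$ is almost disjoint of size at most $\kappa<\mathfrak{ap}$, and pull the resulting separator back to $\mathcal A$; your extra check that $\mathcal A'\cap\mathcal B=\emptyset$ is a harmless detail the paper leaves implicit.
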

\begin{proof}
    It is clear that $\mathfrak{adp}_2\leq\mathfrak{ap}$. For the converse, assume $\kappa<\mathfrak{ap}$.
    We show that $\kappa<\mathfrak{adp}_2$.

    Let $\mathcal A, \mathcal B\subseteq[\omega]^{\aleph_0}$ be such that $\mathcal A\perp\mathcal B$, $\mathcal B$ is an almost disjoint family and $|\mathcal A\cup\mathcal B|\leq\kappa$.
    We show that $\mathcal A$ is weakly separated from $\mathcal B$.
    Let $\lambda=|\mathcal A|\leq \kappa$ and enumerate it as $\mathcal A=\{a_\alpha: \alpha<\lambda\}$.

    Since $\mathfrak{ap}\leq\mathfrak c$, we have
    $\lambda\leq\kappa<\mathfrak{ap}\leq\mathfrak c$.
    Thus, by Theorem \ref{maintool}, there exists $(a'_\alpha: \alpha<\lambda)$ such that $a'_\alpha\in[a_\alpha]^{\aleph_0}$ and for every two distinct
    $\alpha, \beta<\lambda$, $a'_\alpha\perp a'_\beta$. Let $\mathcal A'=\{a'_\alpha: \alpha<\lambda\}$. 
    Then $\mathcal A'\perp \mathcal B$, $\mathcal A'\cup\mathcal B$ is an almost disjoint family and
    $|\mathcal A'\cup\mathcal B|\leq \kappa<\mathfrak{ap}$, so $\mathcal A'$ is weakly separated from $\mathcal B$.

    Let $X\subseteq \omega$ be such that $a'_\alpha\not\perp X$ for every $\alpha<\lambda$, and $b\perp X$ for every $b\in\mathcal B$.
    Since $a'_\alpha\subseteq a_\alpha$, we have $a'_\alpha\cap X\subseteq a_\alpha\cap X$ for every $\alpha<\lambda$.
    Thus, $a_\alpha\not\perp X$ for every $\alpha<\lambda$.
    Therefore, $X$ weakly separates $\mathcal A$ from $\mathcal B$.
\end{proof}

%% file: sections/basics.tex
\section{On almost disjointness and $Q$-spaces}\label{sec:hausdorff-q-spaces}
As discussed in the introduction, the cardinal $\mathfrak q$ is the least cardinality of a set of reals that is not a $Q$-space. For $f,g\in\omega^\omega$, write $f\leq^* g$ if $f(n)\leq g(n)$ for all but finitely many $n\in\omega$. It is known that $\mathfrak p\leq \mathfrak q\leq \mathfrak b$, where $\mathfrak b$ is the bounding number, namely the least cardinality of a family in $\omega^\omega$ that is not bounded by any single function with respect to $\leq^*$ \cite{miller1984special}.

In \cite{banakh2023q}, Banakh and Bazylevych introduced the following separation-axiom variants of $\mathfrak q$ (with the classical invariant denoted by $\mathfrak q_0$ in their notation).

\begin{definition}\label{def:q-variants}
    The cardinal $\mathfrak q_1$ is the least cardinal $\kappa$ for which there exists a second-countable, $T_1$, non-$Q$-space of cardinality $\kappa$.

    The cardinal $\mathfrak q_2$ is the least cardinal $\kappa$ for which there exists a second-countable, Hausdorff, non-$Q$-space of cardinality $\kappa$.

    The cardinal $\mathfrak q_{2\frac{1}{2}}$ is the least cardinal $\kappa$ for which there exists a second-countable, Urysohn, non-$Q$-space of cardinality $\kappa$.
\end{definition}

They proved that $\mathfrak{adp}\leq \mathfrak q_1$. It is also immediate from the separation axioms that $\mathfrak q_1\leq \mathfrak q_2\leq \mathfrak q_{2\frac{1}{2}}\leq \mathfrak q$; together with $\mathfrak q\leq\mathfrak b$, this gives, by Theorem~\ref{thm:adp-equals-dp},
\[
\mathfrak p\leq \mathfrak{dp}=\mathfrak{adp}\leq \mathfrak q_1\leq \mathfrak q_2\leq \mathfrak q_{2\frac{1}{2}}\leq \mathfrak q\leq \mathfrak b.
\]
Banakh and Bazylevych asked whether any of these inequalities between different versions of $\mathfrak q$ can be strict.
They also proved that every submetrizable space, that is, every space admitting a coarser metrizable topology, of cardinality less than $\mathfrak q$ is a $Q$-space.
Thus, any separation of these cardinals from $\mathfrak q$ must use non-submetrizable examples \cite{banakh2023q}.

More specifically, they asked whether $\mathfrak{ap}\leq \mathfrak q_2$.
We show in this section that the answer is positive.

We start by defining a cardinal $\mathfrak{at}$ that is a tree analogue of $\mathfrak{ap}$, which will be proven to be equal to $\mathfrak q_2$ in Theorem~\ref{thm:at-equals-q2}.

In this paper, an $\omega$-tree is a rooted tree of height $\omega$ with finite levels, and subtrees are assumed to be nonempty and downward closed.  For $m<\omega$, write
\[
  T_m=\{t\in T:\htt(t)=m\}
  \quad\text{and}\quad
  T_{<m}=\{t\in T:\htt(t)<m\}.
\]
Two subtrees $S,R$ of $T$ are almost disjoint if $|S\cap R|<\omega$.
A family of subtrees of $T$ is almost disjoint if any two distinct members are almost disjoint.
An infinite subtree means a subtree with infinitely many nodes.

Recall that a branch of a tree $T$ is a maximal linearly ordered subset of $T$.

\begin{definition}\label{def:at}
Let $\mathfrak{at}$ be the least cardinal $\kappa$ for which there is an
$\omega$-tree $T$, an almost disjoint family $\mathcal X$ of infinite subtrees of $T$ with $|\mathcal X|\leq \kappa$, and a subcollection
$\mathcal Y\subseteq\mathcal X$ such that $\mathcal Y$ cannot be weakly
separated from $\mathcal X\setminus\mathcal Y$.
Here a set $D\subseteq T$
weakly separates $(\mathcal Y,\mathcal X\setminus\mathcal Y)$ if
\[
  \forall Y\in\mathcal Y\ |D\cap Y|=\omega,
  \qquad \text{and}\qquad
  \forall Z\in\mathcal X\setminus\mathcal Y\ |D\cap Z|<\omega.
\]
\end{definition}

It follows immediately that $\mathfrak{ap}\leq \mathfrak{at}$, since every $\omega$-tree is countably infinite and, in the definition of $\mathfrak{ap}$, $\omega$ can be replaced by any infinite countable set.

\begin{lemma}\label{lem:tree-fsigma-separator}
Let $\mathcal A$ be an almost disjoint family of infinite subtrees of an
$\omega$-tree $T$, let $\mathcal Y\subseteq\mathcal A$, and put
$\mathcal Z=\mathcal A\setminus\mathcal Y$.
For each $Y\in\mathcal Y$, choose an infinite branch $b_Y\subseteq Y$, and put
\[
  H=\{b_Y:Y\in\mathcal Y\},\qquad X=H\cup\mathcal Z.
\]
The sets
\[
  N_F=\{S\in X:S\cap F=\emptyset\}\qquad(F\subseteq T\text{ finite})
\]
form a countable base for a Hausdorff topology $\tau^-$ on $X$.

If $|\mathcal Z|<\mathfrak b$ and $H$ is $F_\sigma$ in $(X,\tau^-)$,
then $\mathcal Y$ can be weakly separated from $\mathcal Z$.
\end{lemma}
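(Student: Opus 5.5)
The plan has three parts: check the base by hand, prove Hausdorffness by a level-cutting argument, and for the main claim turn the $F_\sigma$ hypothesis into countably many functions indexed by $\mathcal Z$, dominate them using $|\mathcal Z|<\mathfrak b$, and read the separator off single levels of the tree.

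\emph{Base and Hausdorffness.} First, $N_\emptyset=X$, $N_F\cap N_G=N_{F\cup G}$, and $[T]^{<\omega}$ is countable, so the sets $N_F$ form a countable base. Next I check that the members of $X$ are pairwise distinct and pairwise almost disjoint infinite subtrees. Each $b_Y$ is downward closed and infinite. Distinct $Y,Y'$ give $b_Y\cap b_{Y'}\subseteq Y\cap Y'$, which is finite. Likewise $b_Y\cap Z\subseteq Y\cap Z$ is finite, and $Z\cap Z'$ is finite by almost disjointness of $\mathcal A$.

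Now take distinct $S,S'\in X$ and pick $m$ with $S\cap S'\subseteq T_{<m}$. Put $F=T_m\setminus S$ and $G=T_m\setminus S'$. Then $S\in N_F$, $S'\in N_G$, and $F\cup G=T_m$ because $S\cap S'\cap T_m=\emptyset$. Every $R\in X$ is an infinite downward closed subtree of a tree with finite levels, so $R$ meets $T_m$. Hence $N_F\cap N_G=N_{T_m}=\emptyset$, and $\tau^-$ is Hausdorff.

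\emph{Separation: extracting functions.} Write $H=\bigcup_n H_n$ with each $H_n$ closed in $\tau^-$. Replacing $H_n$ by $H_0\cup\dots\cup H_n$, I may assume the $H_n$ are increasing. Fix $Z\in\mathcal Z$ and $n$. Since $Z\notin H$ and $H_n$ is closed, there is a finite $F$ with $Z\cap F=\emptyset$ and $N_F\cap H_n=\emptyset$. Choose $k$ with $F\subseteq T_{<k}$; then $F\subseteq T_{<k}\setminus Z$, so $N_{T_{<k}\setminus Z}\cap H_n=\emptyset$. Thus
\[
f_Z(n)=\min\{k:\ \text{every } b\in H_n \text{ meets } T_{<k}\setminus Z\}
\]
is a well-defined element of $\omega^\omega$. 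Since $|\mathcal Z|<\mathfrak b$, I fix a strictly increasing $g$ with $f_Z\leq^* g$ for all $Z\in\mathcal Z$.

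\emph{Separation: the separator.} Let $D_n=\{t\in T_{g(n)}: t\in b \text{ for some } b\in H_n\}$ and $D=\bigcup_n D_n$. Each $D_n$ is finite because $T_{g(n)}$ is finite.

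For $Y\in\mathcal Y$, pick $n_Y$ with $b_Y\in H_{n_Y}$. By monotonicity $b_Y\in H_n$ for every $n\geq n_Y$. So the node of $b_Y$ at level $g(n)$ lies in $D$ for all such $n$, and since $g$ is strictly increasing these are infinitely many distinct nodes of $b_Y\subseteq Y$.

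For $Z\in\mathcal Z$, choose $N_Z$ with $f_Z(n)\leq g(n)$ for $n\geq N_Z$. Let $n\geq N_Z$ and $t\in D_n$, say $t\in b\in H_n$ with $\htt(t)=g(n)$. Then $b$ contains a node $s\notin Z$ of height $<g(n)$, and $s$ lies below $t$ because $b$ is a branch. Since $Z$ is downward closed, $t\in Z$ would force $s\in Z$, so $t\notin Z$. Hence $D\cap Z\subseteq\bigcup_{n<N_Z}D_n$ is finite, and $D$ weakly separates $(\mathcal Y,\mathcal Z)$.

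\emph{Main obstacle.} The delicate step was choosing $D$ so that it is finite on each $Z$ even though $\mathcal Z$ cannot be handled uniformly. The key device is to sample only the single level $g(n)$. This works because downward closedness of $Z$ turns ``$b$ has left $Z$ below level $g(n)$'' into ``$b$'s node at level $g(n)$ is outside $Z$.''
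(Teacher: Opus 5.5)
Your proof is correct and follows essentially the paper's argument. Hausdorffness is obtained from a level $m$ where the two points are disjoint. For each $Z\in\mathcal Z$ and $n$, a basic neighborhood $N_F$ of $Z$ missing $H_n$, together with the downward closure of $Z$, gives a height $f_Z(n)$ above which branches in $H_n$ have left $Z$. The condition $|\mathcal Z|<\mathfrak b$ then dominates the $f_Z$. The only difference is cosmetic: the paper takes $D=\bigcup_n\bigl(S_n\setminus T_{<g(n)}\bigr)$ with $S_n=\bigcup H_n$, which needs neither increasing $H_n$ nor strictly increasing $g$, while you sample only level $g(n)$ and compensate by making both monotone.
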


\begin{proof}
The almost disjointness of $\mathcal A$ implies $b_Y\neq b_{Y'}$ whenever $Y, Y'$ are distinct members of $\mathcal Y$, and that $H\cap \mathcal Z=\emptyset$.
Thus, $X$ is an almost disjoint
family of infinite subtrees of $T$ and $|X|=|\mathcal A|$.

Notice that
$N_\emptyset=X$ and $N_F\cap N_G=N_{F\cup G}$.
Thus, these sets form a countable base for a topology on $X$.

We claim that this topology is Hausdorff.
For each $m \in \omega$, let $T_m$ be the $m$-th level of $T$.

Let $A,B\in X$ be distinct, and choose $m<\omega$ such that
$A\cap B\cap T_m=\emptyset$.
Then $N_{T_m\setminus A}$ and $N_{T_m\setminus B}$ are open
neighborhoods of $A$ and $B$, respectively.
If $C$ belonged to both neighborhoods, we would have
\[
  C\cap T_m\subseteq A\cap B\cap T_m=\emptyset.
\]
This is impossible by König's lemma.

For the second part, write
\[
  H=\bigcup_{n<\omega}H_n,\qquad S_n=\bigcup_{b\in H_n}b,
\]
where each $H_n$ is closed in $X$.
\begin{claim}
  For every $n<\omega$ and every $Z\in\mathcal Z$, the set $S_n\cap Z$ is finite.
\end{claim}
\begin{proof}[Proof of the claim]
Since $Z\notin H_n$ and $H_n$ is closed, there exists a finite $F\subseteq T$ such that
\[
  Z\in N_F\subseteq X\setminus H_n.
\]

If $F=\emptyset$, then $H_n=\emptyset$ and the claim is immediate.
Otherwise, let $M=\max\{\htt(t):t\in F\}+1$.

Given $t\in S_n\cap Z$, $\htt(t)<M$, for if $\htt(t)\geq M$, choose, as $t\in S_n$, an element $b\in H_n$
with $t\in b$.
As $b \in H_n$, it follows that $b\notin N_F$, so there exists $u\in b\cap F$.

Since $b$ is a branch, since $u, t\in b$, and since $\htt(t)\geq M>\htt(u)$, we have $u<t$.
As $t\in Z$ and $Z$ is a subtree, we have $u\in Z$, contradicting $Z\in N_F$.
Consequently,
\[
  S_n\cap Z\subseteq T_{<M},
\]
which is finite.
\end{proof}

For each $Z\in\mathcal Z$, choose $f_Z\in\omega^\omega$ such that
\[
  S_n\cap Z\subseteq T_{<f_Z(n)}
  \qquad(n<\omega).
\]
As $|\mathcal Z|<\mathfrak b$, there exists
$g\in\omega^\omega$ such that $f_Z\leq^*g$ for every
$Z\in\mathcal Z$. Define
\[
  D=\bigcup_{n<\omega}\bigl(S_n\setminus T_{<g(n)}\bigr).
\]

Given $Y\in\mathcal Y$, there exists $n<\omega$ such that $b_Y\in H_n$.
Hence $b_Y\setminus T_{<g(n)}\subseteq S_n\setminus T_{<g(n)}$, so $|D\cap Y|=\omega$.

Given $Z\in\mathcal Z$, choose $N<\omega$ such that
$f_Z(n)\leq g(n)$ for all $n\geq N$.
For such numbers $n$, the set $S_n\setminus T_{<g(n)}$ is disjoint from $Z$.
Therefore
\[
  D\cap Z\subseteq\bigcup_{n<N}(S_n\cap Z),
\]
a finite union of finite sets. Thus $|D\cap Z|<\omega$.
It follows that $D$ weakly separates $\mathcal Y$ from $\mathcal Z$, completing the proof.
\end{proof}

\begin{proposition}\label{prop:q2-below-at}
$\mathfrak q_2\leq \mathfrak{at}$.
\end{proposition}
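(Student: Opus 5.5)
We prove $\mathfrak q_2\leq\mathfrak{at}$ by turning a witness for $\mathfrak{at}$ into a second-countable Hausdorff non-$Q$-space of no larger cardinality, using Lemma~\ref{lem:tree-fsigma-separator}. Let $\kappa=\mathfrak{at}$. Fix an $\omega$-tree $T$, an almost disjoint family $\mathcal X$ of infinite subtrees of $T$ with $|\mathcal X|\leq\kappa$, and $\mathcal Y\subseteq\mathcal X$ that cannot be weakly separated from $\mathcal Z=\mathcal X\setminus\mathcal Y$.

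First we split into two cases according to the size of $\kappa$ relative to $\mathfrak b$. If $\kappa\geq\mathfrak b$, then the chain $\mathfrak q_2\leq\mathfrak q\leq\mathfrak b$ recalled at the start of this section gives $\mathfrak q_2\leq\kappa$ directly, and we are done. So the substantive case is $\kappa<\mathfrak b$. There $|\mathcal Z|\leq\kappa<\mathfrak b$, which is exactly the cardinality hypothesis of Lemma~\ref{lem:tree-fsigma-separator}.

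In that case we apply Lemma~\ref{lem:tree-fsigma-separator} with $\mathcal A=\mathcal X$. For each $Y\in\mathcal Y$ we pick an infinite branch $b_Y\subseteq Y$; one exists by König's lemma, since $Y$ is an infinite finitely branching rooted tree. We then form $H=\{b_Y:Y\in\mathcal Y\}$ and the space $X=H\cup\mathcal Z$ with the topology $\tau^-$. The lemma tells us that $(X,\tau^-)$ is second countable and Hausdorff. Its proof also shows that $Y\mapsto b_Y$ is injective and that $H\cap\mathcal Z=\emptyset$, so $|X|=|\mathcal X|\leq\kappa$.

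It remains to check that $(X,\tau^-)$ is not a $Q$-space. Suppose it were. Then every subset of $X$ is $G_\delta$; in particular $\mathcal Z$ is $G_\delta$, so its complement $H$ is $F_\sigma$ in $(X,\tau^-)$. Since also $|\mathcal Z|<\mathfrak b$, the second part of Lemma~\ref{lem:tree-fsigma-separator} yields that $\mathcal Y$ can be weakly separated from $\mathcal Z$, contradicting the choice of $\mathcal Y$. Hence $X$ is a second-countable Hausdorff non-$Q$-space of cardinality at most $\kappa$, and so $\mathfrak q_2\leq\kappa$.

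Nearly all of the work is already contained in Lemma~\ref{lem:tree-fsigma-separator}. The one point needing care is its hypothesis $|\mathcal Z|<\mathfrak b$, and the case split via $\mathfrak q_2\leq\mathfrak b$ is what handles it.
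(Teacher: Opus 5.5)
Your proof is correct and is essentially the paper's argument run contrapositively. The paper fixes $\kappa<\mathfrak q_2$, notes that $(X,\tau^-)$ is then a $Q$-space (so $H$ is $F_\sigma$), and uses $\kappa<\mathfrak q_2\leq\mathfrak b$ to apply Lemma~\ref{lem:tree-fsigma-separator}; your case split on $\mathfrak b$ does exactly the job of that inequality.
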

\begin{proof}
Fix $\kappa<\mathfrak q_2$, an almost disjoint family $\mathcal A$ of
infinite subtrees of an $\omega$-tree $T$ with $|\mathcal A|\leq\kappa$,
and $\mathcal Y\subseteq\mathcal A$.
Choose branches by König's lemma and form $H$, $X$, and $\tau^-$ as in
Lemma~\ref{lem:tree-fsigma-separator}.
Since $(X,\tau^-)$ is second-countable and Hausdorff and
$|X|\leq\kappa<\mathfrak q_2$, it is a $Q$-space. In particular, $H$
is $F_\sigma$ in $\tau^-$.
Also $|\mathcal A\setminus\mathcal Y|\leq\kappa<\mathfrak q_2\leq\mathfrak b$,
so the lemma gives a weak separator of $\mathcal Y$ from
$\mathcal A\setminus\mathcal Y$. Thus $\kappa<\mathfrak{at}$.
\end{proof}

We will need the following auxiliary lemma.
\begin{lemma}\label{prop:at-le-b}
  $\mathfrak{at}\leq \mathfrak q$.
  Thus, $\mathfrak{at}\leq \mathfrak b$.
\end{lemma}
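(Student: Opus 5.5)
To prove $\mathfrak{at}\leq\mathfrak q$, we take a set of reals that is not a $Q$-set, of size $\mathfrak q$, and turn it into a tree-based family that cannot be weakly separated. The second clause then follows immediately from $\mathfrak q\leq\mathfrak b$.

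The natural model is the full binary tree $T=2^{<\omega}$, which is an $\omega$-tree. Points of the Cantor space $2^\omega$ correspond to branches $a_x=\{x\restriction n:n\in\omega\}$, and each branch is an infinite subtree. Distinct branches meet in a finite set, so $\{a_x:x\in 2^\omega\}$ is an almost disjoint family of infinite subtrees. Since $\mathfrak q$ can be computed inside $2^\omega$ (every separable metrizable space embeds in $[0,1]^\omega$, and a standard argument moves a non-$Q$-set into the Cantor set, or one may simply use the usual fact that $\mathfrak q$ is witnessed by a subset of $2^\omega$), fix $P\subseteq 2^\omega$ with $|P|=\mathfrak q$ and a subset $Q\subseteq P$ that is not $G_\delta$ in $P$. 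Equivalently, $P\setminus Q$ is not $F_\sigma$ in $P$.

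Put $\mathcal X=\{a_x:x\in P\}$ and $\mathcal Y=\{a_x:x\in Q\}$. Suppose, toward a contradiction, that $D\subseteq T$ weakly separates $(\mathcal Y,\mathcal X\setminus\mathcal Y)$. Then $Q=\{x\in P:|D\cap a_x|=\omega\}$. For each $k$, the set
\[
U_k=\{x\in 2^\omega:\exists n\geq k\ (x\restriction n\in D)\}
\]
is open, and $\{x:|D\cap a_x|=\omega\}=\bigcap_k U_k$ is a $G_\delta$ subset of $2^\omega$. Therefore $Q=P\cap\bigcap_k U_k$ is $G_\delta$ in $P$, which is a contradiction. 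Hence $\mathfrak{at}\leq|\mathcal X|=\mathfrak q$.

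The main point to check is the reduction to a non-$Q$-set inside $2^\omega$. I would argue it as follows. A non-$Q$-set $P\subseteq\mathbb R$ of size $\mathfrak q<\mathfrak c$ does not contain a copy of the Cantor set. It therefore misses some countable dense set, and a set of reals avoiding a countable dense set $C$ embeds into $\mathbb R\setminus C\cong\omega^\omega$, which embeds into $2^\omega$. The $Q$-set property is hereditary under homeomorphic copies, so this suffices. Alternatively, one can replace $2^{<\omega}$ by the tree $\omega^{<\omega}$ pruned to finite levels. That is awkward, so the embedding route is the one I would take.
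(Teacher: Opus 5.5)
Your proof is correct, but it takes a more self-contained route than the paper. The paper quotes Brendle's characterization of $\mathfrak q$ (his Proposition~3.4): $\mathfrak q$ is the least size of a family of branches of $2^{<\omega}$ that has a subfamily which cannot be weakly separated from the rest. It then observes that branches are infinite subtrees, so such a family is a witness for $\mathfrak{at}$.

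You instead prove directly the half of that characterization that is actually needed. Under $x\mapsto a_x=\{x\restriction n:n\in\omega\}$, the set $\{x\in 2^\omega:|D\cap a_x|=\omega\}=\bigcap_k U_k$ is $G_\delta$ in $2^\omega$. So a weak separator for $(\{a_x:x\in Q\},\{a_x:x\in P\setminus Q\})$ would make $Q$ a $G_\delta$ subset of $P$. This exposes the mechanism behind Brendle's result and removes any dependence on how the cited characterization is formulated, including the order convention for weak separation. The price is that you must move a witness for $\mathfrak q$ into $2^\omega$ yourself.

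There is one small omission in that reduction: your embedding argument assumes $\mathfrak q<\mathfrak c$, which can fail (for instance under CH, or under MA, where $\mathfrak p=\mathfrak c$). In the case $\mathfrak q=\mathfrak c$, simply take $P=2^\omega$. It has $2^{\mathfrak c}$ subsets but only $\mathfrak c$ many $G_\delta$ subsets, so some $Q\subseteq 2^\omega$ is not $G_\delta$, and your argument then applies verbatim. In the case $\mathfrak q<\mathfrak c$, your reasoning works: a set of reals of size less than $\mathfrak c$ contains no interval, so its complement is dense and contains a countable dense set $C$. With the case $\mathfrak q=\mathfrak c$ added, the proof is complete, and $\mathfrak{at}\leq\mathfrak b$ follows from $\mathfrak q\leq\mathfrak b$ exactly as in the paper.
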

\begin{proof}
  Proposition~3.4 of \cite{brendle1999dow} says that:
  \[
    \mathfrak q = \min\left\{|\mathcal A|:
    \begin{array}{l}
      \mathcal A\subseteq [2^{<\omega}]^\omega\text{ is a family of branches, and}\\
      \exists\mathcal B\subseteq\mathcal A\text{ such that }(\mathcal A\setminus\mathcal B,\mathcal B)\\
      \text{is not weakly separated}
    \end{array}\right\}.
  \]

  As $2^{<\omega}$ is an $\omega$-tree and every branch is a subtree of $2^{<\omega}$, it follows that $\mathfrak{at}\leq \mathfrak q$.
  As $\mathfrak q\leq \mathfrak b$, we have $\mathfrak{at}\leq \mathfrak b$.
\end{proof}

The following bounding argument reduces $F_{\sigma\delta}$ sets of
cardinality less than $\mathfrak b$ to $F_\sigma$ sets.

\begin{lemma}\label{lem:diagonal-collapse}
Let $A$ be an $F_{\sigma\delta}$ subset of a topological space $X$.
If $|A|<\mathfrak b$, then $A$ is $F_\sigma$ in $X$.
\end{lemma}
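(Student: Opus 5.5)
Write $A=\bigcap_{n<\omega}\bigcup_{m<\omega}F_{n,m}$, where each $F_{n,m}$ is closed in $X$. Replacing $F_{n,m}$ by $F_{n,0}\cup\dots\cup F_{n,m}$, we may assume that for each fixed $n$ the sequence $(F_{n,m})_{m<\omega}$ is increasing in $m$. Finite unions of closed sets are closed, and the union over $m$ is unchanged. The plan is to use $\mathfrak b$ to pick a single function $g$ uniformly bounding the ``witness indices'' of all points of $A$, and then to show that $A$ equals a countable union of closed sets built from $g$.

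First, for each $a\in A$ define $f_a\in\omega^\omega$ by letting $f_a(n)$ be the least $m$ with $a\in F_{n,m}$. This exists because $a\in\bigcup_m F_{n,m}$ for every $n$. Since $|\{f_a:a\in A\}|\leq|A|<\mathfrak b$, there is $g\in\omega^\omega$ with $f_a\leq^* g$ for all $a\in A$. For each $k<\omega$ put
\[
  C_k=\bigcap_{n\geq k}F_{n,g(n)}\cap\bigcap_{n<k}\bigcup_{m<\omega}F_{n,m}.
\]
This set is not obviously closed, so instead I will use
\[
  C'_{k,h}=\bigcap_{n\geq k}F_{n,g(n)}\cap\bigcap_{n<k}F_{n,h(n)}
\]
for finite sequences $h\in\omega^k$. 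Each $C'_{k,h}$ is an intersection of closed sets and hence closed, and there are only countably many pairs $(k,h)$. Set $A'=\bigcup_{k,h}C'_{k,h}$, which is $F_\sigma$.

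Next I verify $A=A'$. If $a\in A$, choose $k$ with $f_a(n)\leq g(n)$ for all $n\geq k$. By monotonicity, $a\in F_{n,f_a(n)}\subseteq F_{n,g(n)}$ for $n\geq k$. Letting $h=f_a\restriction k$, we get $a\in F_{n,h(n)}$ for $n<k$, so $a\in C'_{k,h}$. Conversely, each point of any $C'_{k,h}$ lies in $\bigcup_m F_{n,m}$ for every $n$, hence lies in $A$. So $A$ is $F_\sigma$.

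I expect no real obstacle. The only subtle points are the monotonization step, which makes ``$f_a\leq^* g$'' imply membership in $F_{n,g(n)}$, and the need to handle the finitely many coordinates below $k$ with countably many finite patterns $h$ rather than a single bound. If $A$ is finite or empty the argument still works, since $\mathfrak b$ bounds any family of size less than $\mathfrak b$.
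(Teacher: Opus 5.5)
Your proof is correct and follows the same idea as the paper: monotonize in $m$, use $|A|<\mathfrak b$ to bound all the witness functions $f_a$ by a single $g$, and write $A$ as a countable union of closed tail intersections. The only difference is how the coordinates below the domination threshold are handled. The paper passes to the cumulative sets $E_{n,m}=\bigcap_{i\leq n}F_{i,m}$, which gives $A=\bigcup_N\bigcap_{n\geq N}E_{n,g(n)}$ with no extra indexing, whereas you index over the countably many finite patterns $h\in\omega^k$; both approaches work.
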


\begin{proof}
Write
\[
  A=\bigcap_{n\in\omega}\bigcup_{m<\omega}F_{n,m},
\]
where the sets $F_{n,m}$ are closed and increasing in $m$.
Put $E_{n,m}=\bigcap_{i\leq n}F_{i,m}$ and
$H_n=\bigcup_{m\in\omega}E_{n,m}$.
Then $H_n=\bigcap_{i\leq n}\bigcup_{m\in\omega}F_{i,m}$, so the
sets $H_n$ are decreasing and $A=\bigcap_{n\in \omega} H_n$.
For each $a\in A$, choose $f_a\in\omega^\omega$ with
$a\in E_{n,f_a(n)}$ for all $n$.
Since $|A|<\mathfrak b$, there is $g\in\omega^\omega$ such that
$f_a\leq^*g$ for every $a\in A$. We have
\[
  A=\bigcup_{N\in\omega}\bigcap_{n\geq N}E_{n,g(n)}.
\]
The forward inclusion follows from eventual domination and monotonicity
in $m$. Conversely, a point in $\bigcap_{n\geq N}E_{n,g(n)}$ belongs
to every $H_n$ for $n\geq N$, and hence to $A$ because the $H_n$ are
decreasing. Each intersection on the right is closed, so $A$ is $F_\sigma$.
\end{proof}

Now we prove the main result of this section.

\begin{thm}\label{thm:at-equals-q2}
$\mathfrak{ap}\leq \mathfrak{at}=\mathfrak q_{2}$.
\end{thm}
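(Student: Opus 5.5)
By Proposition~\ref{prop:q2-below-at} we already have $\mathfrak q_2\leq\mathfrak{at}$, and the inequality $\mathfrak{ap}\leq\mathfrak{at}$ was observed right after Definition~\ref{def:at}. So it remains to prove $\mathfrak{at}\leq\mathfrak q_2$. I will show that every second-countable Hausdorff space $X$ with $|X|<\mathfrak{at}$ is a $Q$-space. The idea is to code the points of $X$ as an almost disjoint family of subtrees of $2^{<\omega}$, and to turn a weak separator back into an $F_{\sigma\delta}$ description of the given set. Lemma~\ref{lem:diagonal-collapse} then applies, because $|X|<\mathfrak{at}\leq\mathfrak b$ by Lemma~\ref{prop:at-le-b}.

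Fix a countable base $(U_i)_{i<\omega}$ of $X$ and work in the $\omega$-tree $T=2^{<\omega}$. For $x\in X$ put
\[
  S_x=\{\sigma\in 2^{<\omega}:\ \forall i<|\sigma|\ \bigl(x\in U_i\Rightarrow\sigma(i)=1\bigr)\ \wedge\ \bigl(x\notin \overline{U_i}\Rightarrow\sigma(i)=0\bigr)\}.
\]
The defining condition is coordinatewise, so $S_x$ is downward closed. It contains at least the characteristic sequence of $\{i:x\in U_i\}$ restricted to each length, so it is an infinite subtree.

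Next I check almost disjointness. Let $x\neq y$. Since $X$ is Hausdorff, there are disjoint basic sets $U_i\ni x$ and $U_j\ni y$; then $y\notin\overline{U_i}$. Take $\sigma\in S_x\cap S_y$ with $|\sigma|>i$. Membership in $S_x$ forces $\sigma(i)=1$, while membership in $S_y$ forces $\sigma(i)=0$, a contradiction. Hence $S_x\cap S_y\subseteq T_{<i+1}$ is finite. The same argument shows $x\mapsto S_x$ is injective, so $\mathcal X=\{S_x:x\in X\}$ has size $|X|<\mathfrak{at}$. This step is where the Hausdorff property is used, and it is the point I expected to be delicate: using only the traces $\{i:x\in U_i\}$ gives branches and recovers only $\mathfrak q$, while allowing the free coordinates between $U_i$ and $\overline{U_i}$ enlarges $S_x$ without destroying almost disjointness.

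Now let $A\subseteq X$. By the definition of $\mathfrak{at}$ there is $D\subseteq T$ with $|D\cap S_x|=\omega$ for $x\in A$ and $|D\cap S_x|<\omega$ for $x\notin A$. For $\sigma\in T$ let
\[
  C_\sigma=\{x:\sigma\in S_x\}=\bigcap_{\sigma(i)=0}(X\setminus U_i)\ \cap\ \bigcap_{\sigma(i)=1}\overline{U_i},
\]
which is closed. Since levels of $T$ are finite, $|D\cap S_x|=\omega$ holds iff $D\cap S_x$ contains nodes of arbitrarily large height. Therefore
\[
  A=\bigcap_{N<\omega}\ \bigcup_{\sigma\in D,\ |\sigma|\geq N} C_\sigma,
\]
an $F_{\sigma\delta}$ set. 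Because $|A|\leq|X|<\mathfrak b$, Lemma~\ref{lem:diagonal-collapse} shows that $A$ is $F_\sigma$. Applying this to $X\setminus A$ as well shows that $X\setminus A$ is $F_\sigma$, so $A$ is $G_\delta$. Since $A$ was arbitrary, $X$ is a $Q$-space, which gives $\mathfrak{at}\leq\mathfrak q_2$ and completes the proof.
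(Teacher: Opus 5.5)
Your proof is correct and is essentially the paper's argument: both code each point $x$ as the subtree $\{\sigma\in 2^{<\omega}: x\in\bigcap_{i<|\sigma|}C_i^{\sigma(i)}\}$ for a sequence of two-piece closed covers $\{C_i^0,C_i^1\}$ that separate distinct points, read off an $F_{\sigma\delta}$ description of $A$ from a weak separator, and collapse it to $F_\sigma$ by Lemma~\ref{lem:diagonal-collapse} using $\mathfrak{at}\leq\mathfrak b$. The only cosmetic difference is the choice of covers: you take $C_i^0=X\setminus U_i$ and $C_i^1=\overline{U_i}$, indexed by single basic sets, whereas the paper takes $C_k^0=X\setminus U_k$ and $C_k^1=X\setminus V_k$, indexed by ordered pairs of disjoint basic sets.
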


\begin{proof}
We have already argued that $\mathfrak{ap}\leq \mathfrak{at}$.
By Proposition~\ref{prop:q2-below-at}, it suffices to show that $\mathfrak{at}\leq \mathfrak q_{2}$.
Let $\kappa<\mathfrak{at}$.
We show that $\kappa<\mathfrak q_{2}$.

Let $X$ be a second-countable Hausdorff space with $|X|\leq\kappa$.
We prove that $X$ is a $Q$-space.
We may assume that $X$ has at least two points.

Fix a countable base $\mathcal B$ and enumerate all ordered
pairs of disjoint members of $\mathcal B$ as $(U_k,V_k)_{k<\omega}$,
repeating pairs if necessary.
For each $k<\omega$, let
\[
  C_k^0=X\setminus U_k,\qquad C_k^1=X\setminus V_k.
\]
These sets are closed and satisfy $C_k^0\cup C_k^1=X$.
For each $s\in 2^{<\omega}$, define
\[
  F_s=\bigcap_{k<|s|}C_k^{s(k)},\qquad F_\emptyset=X,
\]
and, for each $x\in X$, put
\[
  T_x=\{s\in 2^{<\omega}:x\in F_s\}.
\]
\begin{claim}
  The collection $\mathcal T=\{T_x:x\in X\}$ is an almost disjoint family of infinite subtrees of $2^{<\omega}$.
\end{claim}
\begin{proof}[Proof of the claim]
  Every $F_s$ is closed, and $F_t\subseteq F_s$ whenever $s\subseteq t$.
  Thus each $T_x$ is a subtree of $2^{<\omega}$.
  Since each pair $\{C_k^0,C_k^1\}$ covers $X$, the tree $T_x$ meets
  every level of $2^{<\omega}$, so it is infinite.

  Now fix two distinct points $x,y\in X$.
  We show that $T_x\cap T_y$ is finite.

  As $X$ is Hausdorff, there exists $k\in \omega$ such that $x\in U_k$, $y\in V_k$, and $U_k\cap V_k=\emptyset$.
  Thus, if $s\in 2^{\geq k+1}$, then $s(k)=0$ implies $x\notin F_s$, and $s(k)=1$ implies $y\notin F_s$.
  Consequently, $T_x\cap T_y\subseteq2^{\leq k}$, which is finite.
\end{proof}

Notice that $|\mathcal T|=|X|\leq\kappa<\mathfrak{at}$.
Now fix $A\subseteq X$.
We will show that $A$ is an $F_\sigma$ subset of $X$.

By the definition of $\mathfrak{at}$, there exists
$D\subseteq 2^{<\omega}$ such that, for every $x\in A$ and every $y\in X\setminus A$,
\[
  |D\cap T_x|=\omega \qquad \text{and} \qquad
  |D\cap T_y|<\omega.
\]
For $(n,m)\in\omega^2$, define the closed set
\[
  E_{n,m}=\bigcup\{F_s:s\in D,\ n\leq |s|\leq m\}.
\]
Since the levels of $2^{<\omega}$ are finite, each $E_{n,m}$ is a finite
union of closed sets. Moreover,
\[
  A=\bigcap_{n<\omega}\bigcup_{m<\omega}E_{n,m}.
\]
Indeed, for $x\in A$, the infinite set $D\cap T_x$ contains nodes of
arbitrarily large length, so for every $n$ there is $m$ with
$x\in E_{n,m}$. Conversely, if $y\notin A$, the set $D\cap T_y$ is
finite. Choose $n=\sup\{|s|:s\in D\cap T_y\}+1$.
Then
$y\notin E_{n,m}$ for every $m\in \omega$.
Thus $A$ is $F_{\sigma\delta}$ in $X$.
Since $|A|\leq\kappa<\mathfrak{at}\leq\mathfrak b$ by
Lemma~\ref{prop:at-le-b}, Lemma~\ref{lem:diagonal-collapse} implies
that $A$ is $F_\sigma$ in $X$.

As $A$ is an arbitrary subset of $X$, the space $X$ is a $Q$-space.
As $X$ was arbitrary, $\kappa<\mathfrak q_2$.
Therefore $\mathfrak{at}\leq\mathfrak q_2$, completing the proof.
\end{proof}

%% file: sections/diagonal.tex
\section{$G_\delta$ diagonals}
\label{sec:small-diagonals}

Theorem~\ref{thm:at-equals-q2} establishes
$\mathfrak{ap}\leq\mathfrak{at}=\mathfrak q_2$.
We now show that the same cardinal is obtained by replacing the Hausdorff
condition with the $T_1$ axiom together with a $G_\delta$-diagonal.

\begin{definition}\label{def:q-diagonal-variants}
For $i\in\{1,2,2\frac{1}{2}\}$, let $\mathfrak q_{i\Delta}$ be the least
cardinality of a second-countable $T_i$ non-$Q$-space with a
$G_\delta$-diagonal. Thus $\mathfrak q_{1\Delta}$,
$\mathfrak q_{2\Delta}$ and $\mathfrak q_{2\frac{1}{2}\Delta}$ require,
respectively, the $T_1$, Hausdorff and Urysohn separation axioms.
\end{definition}

Clearly, for each $i\in\{1,2,2\frac{1}{2}\}$, we have $\mathfrak q_i\leq\mathfrak q_{i\Delta}$.
Moreover, we have
\[
  \mathfrak q_{1\Delta}\leq\mathfrak q_{2\Delta}
  \leq\mathfrak q_{2\frac{1}{2}\Delta}\leq\mathfrak q\leq\mathfrak b.
\]

In this section we will show that $\mathfrak q_{1\Delta}=\mathfrak{at}=\mathfrak q_2$ and that $\mathfrak q_{2\frac{1}{2}\Delta}=\mathfrak q_{2\frac{1}{2}}$.

\begin{lemma}\label{lem:q2-leq-q1delta}
Every second-countable space $X$ with a $G_\delta$-diagonal and
$|X|<\mathfrak q_2$ is a $Q$-space. Consequently,
$\mathfrak q_2\leq\mathfrak q_{1\Delta}$.
\end{lemma}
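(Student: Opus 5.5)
We reuse the strategy of the proof of Theorem~\ref{thm:at-equals-q2}. The point is that the Hausdorff axiom was used there only to make the trees $T_x$ pairwise almost disjoint, and a $G_\delta$-diagonal gives the same thing. No separation axiom is needed in the statement, because the diagonal is closed under the intersection that defines $G_\delta$, and that suffices.

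Let $X$ be second-countable with a $G_\delta$-diagonal and $|X|<\mathfrak q_2$. Write $\Delta_X=\bigcap_{n}W_n$, where each $W_n$ is open in $X^2$. Fix a countable base $\mathcal B$. Each $X^2\setminus W_n$ is covered by products $U\times V$ with $U,V\in\mathcal B$ and $U\times V\cap W_n=\emptyset$. Every such box has $U\cap V=\emptyset$, since a point in $U\cap V$ would give a diagonal point in $U\times V$ and hence in $W_n$. So for distinct $x,y$ there are some $n$ with $(x,y)\notin W_n$, and then basic open $U\ni x$ and $V\ni y$ with $U\cap V=\emptyset$. In other words, any two distinct points are separated by disjoint basic sets. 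This is exactly Hausdorffness, with no $T_1$ hypothesis needed. If this holds, $X$ is already second-countable Hausdorff of size $<\mathfrak q_2$, hence a $Q$-space, and we are done.

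I should check whether this is really correct. The space $X^2\setminus\Delta_X=\bigcup_n (X^2\setminus W_n)$ consists of closed sets, not open ones. So a point $(x,y)\notin W_n$ need not have a box neighborhood missing $W_n$, and the step above fails. The honest version is this: a $G_\delta$-diagonal gives open $W_n$ with $\bigcap W_n=\Delta$. For distinct $x,y$, pick $n$ with $(x,y)\notin W_n$. Since $(x,x)\in W_n$, there is a basic $U\ni x$ with $U\times U\subseteq W_n$, so $y\notin U$ or at least $(x,y)\notin U\times U$. This is weaker than Hausdorff. Instead, I will run the tree construction with closed sets $C^0_k=X\setminus U_k$ and $C^1_k=X\setminus U_k'$ indexed by pairs of basic sets, requiring only that $C_k^0\cup C_k^1=X$ (that is, $U_k\cap U'_k=\emptyset$). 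This again needs disjoint basic sets.

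So the main obstacle is extracting enough separation to make $\{T_x\}$ almost disjoint, and I expect to fall back on a more flexible coding. I would enumerate pairs $(n,U)$ with $U\in\mathcal B$ and $U\times U\subseteq W_n$. I would then code $x$ by the tree of finite sequences that record, at each level, which basic sets $U$ (with $U\times U\subseteq W_n$) contain $x$, with branching over covers of $X$ by such $U$'s (countable subcovers by Lindelöfness, truncated to finite pieces along levels). Two distinct points can then share only nodes below the level where a $W_n$ separating them is handled, because no single $U$ with $U\times U\subseteq W_n$ contains both. Having almost disjoint infinite subtrees of an $\omega$-tree, the remainder proceeds verbatim as in Theorem~\ref{thm:at-equals-q2}. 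Using $|X|<\mathfrak q_2=\mathfrak{at}$, we obtain a weak separator $D$, show that $A$ is $F_{\sigma\delta}$ via unions of the closed sets $F_s$ over $s\in D$, and apply Lemma~\ref{lem:diagonal-collapse} with $\mathfrak{at}\le\mathfrak b$ from Lemma~\ref{prop:at-le-b}. Two points need care: ensuring the coding sets $F_s$ are closed (taking closures of $U$ possibly breaks $U\times U\subseteq W_n$, so one should use complements as before), and keeping the levels finite. The consequence $\mathfrak q_2\le\mathfrak q_{1\Delta}$ is then immediate.
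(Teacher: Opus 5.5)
You were right to retract the first paragraph. A $G_\delta$-diagonal does not give Hausdorffness. What it does give is that each $\mathcal C_n=\{U\in\mathcal B:U\times U\subseteq W_n\}$ covers $X$, and that no member of $\mathcal C_n$ contains two points $x,y$ with $(x,y)\notin W_n$.

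The genuine gap is the step ``truncated to finite pieces along levels''. In general $\mathcal C_n$ is an infinite cover with no finite subcover, and Lindel\"ofness only returns countable subcovers, which you already have. If you branch over all of $\mathcal C_n$, the levels are infinite, so the tree is not an $\omega$-tree. If you branch over a finite subfamily $\{U_{n,i}:i\le k_n\}$, then $T_x$ is finite for every $x$ outside $\bigcup_{i\le k_n}U_{n,i}$, so the family is not admissible in the definition of $\mathfrak{at}$. No fixed choice of finite truncations handles all points at once; this is exactly where the paper uses the bounding number. Choose $f_x(n)$ with $x\in U_{n,f_x(n)}$. Use $|X|<\mathfrak q_2\le\mathfrak b$ to get $g$ with $f_x\le^* g$ for all $x$. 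Then split $X=\bigcup_N X_N$, where $X_N=\bigcap_{n\ge N}\bigcup_{i\le g(n)}U_{n,i}$. Only on each piece $X_N$, which is merely $G_\delta$ in $X$, do you have finite covers at every level $n\ge N$. So the argument must be run piece by piece and then reassembled.

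Your worry about closedness is misplaced, and the fix you suggest would fail. If $x$ is coded by complements of single members of $\mathcal C_n$, two distinct points share every node built from sets containing neither of them, so the trees need not be almost disjoint. Keep the open sets instead. Take the $W_n$ decreasing, and for $x\in X_N$ put $T^N_x=\{s\in\omega^{<\omega}: s(k)\le g(N+k)\text{ and }x\in U_{N+k,s(k)}\text{ for all }k<|s|\}$. These are infinite, pairwise almost disjoint subtrees of a finitely branching tree. Running the computation of Theorem~\ref{thm:at-equals-q2} with the now open sets $F_s$ shows that $A\cap X_N$ is $G_\delta$ in $X$. Hence every subset of $X$ is a countable union of $G_\delta$ sets. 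Applying this to $X\setminus A$ makes $A$ an $F_{\sigma\delta}$ set, and Lemma~\ref{lem:diagonal-collapse} finishes.

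This repaired argument is a legitimate alternative to the paper's proof. The paper instead equips $X_N$ with the coarser topology $\sigma_N$ generated by the closed sets $X_N\setminus (X_N\cap U_{n,i})$. It uses the finite covers to show $\sigma_N$ is Hausdorff, and then applies the definition of $\mathfrak q_2$ directly rather than going through $\mathfrak{at}$. In both versions the bounding step is what makes finite levels possible, and it is the ingredient missing from your proposal.
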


\begin{proof}
Write
$\Delta_X=\bigcap_{n<\omega}W_n$, where each $W_n$ is open in $X^2$
and $W_{n+1}\subseteq W_n$.
Fix a countable base $\mathcal B$ for $X$. For each $n\in \omega$, let
\[
  \mathcal C_n=\{U\in \mathcal B:U\times U\subseteq W_n\}.
\]
Each $\mathcal C_n$ is an open cover of $X$.
Enumerate $\mathcal C_n=\{U_{n,i}:i<\omega\}$, repeating members
if necessary.
For each $x\in X$, choose $f_x(n)$ with $x\in U_{n,f_x(n)}$.
Since $|X|<\mathfrak q_2\leq\mathfrak q\leq\mathfrak b$,
choose $g\in\omega^\omega$ such that for every $x \in X$, $f_x\leq^*g$. For $N<\omega$, put
\begin{equation}\label{eq:diagonal-finite-cover-pieces}
  X_N=\bigcap_{n\geq N}\bigcup_{i\leq g(n)}U_{n,i}.
\end{equation}
Each $X_N$ is $G_\delta$ in $X$, and $X=\bigcup_{N\in \omega} X_N$.

Fix $N\in\omega$ and write
$V_{n,i}=X_N\cap U_{n,i}$ for $n\geq N$ and $i\leq g(n)$.
Let $\tau_N$ be the original subspace topology on $X_N$, and let
$\sigma_N$ be the topology generated by the sets $X_N\setminus V_{n,i}$.
Their finite intersections form a countable base of $\tau_N$-closed sets.

We claim that $\sigma_N$ is Hausdorff. Given distinct $x,y\in X_N$,
choose $n\geq N$ with $(x,y)\notin W_n$.
Since $U_{n,i}\times U_{n,i}\subseteq W_n$, no $V_{n,i}$ contains both points.
The sets
\[
  H_x=\bigcap_{\substack{i\leq g(n)\\x\notin V_{n,i}}}
       (X_N\setminus V_{n,i}),
  \qquad
  H_y=\bigcap_{\substack{i\leq g(n)\\y\notin V_{n,i}}}
       (X_N\setminus V_{n,i})
\]
are $\sigma_N$-open neighborhoods of $x$ and $y$, respectively.
If $z\in H_x\cap H_y$, choose $i\leq g(n)$ with $z\in V_{n,i}$,
using \eqref{eq:diagonal-finite-cover-pieces}.
The definitions of $H_x$ and $H_y$ then imply $x,y\in V_{n,i}$,
a contradiction. Thus $H_x\cap H_y=\emptyset$.

Fix $A\subseteq X$. Since $|X_N|<\mathfrak q_2$, the space
$(X_N,\sigma_N)$ is a $Q$-space, so $A\cap X_N$ is $G_\delta$
in $\sigma_N$. Every $\sigma_N$-open set is a countable union of
$\tau_N$-closed basic sets. Hence $A\cap X_N$ is $F_{\sigma\delta}$
in $\tau_N$, and we may choose an $F_{\sigma\delta}$ set $B_N$ in $X$
such that $A\cap X_N=B_N\cap X_N$. Since the sets $X_N$ cover $X$, we have
\[
  A=\bigcap_{N\in\omega}\bigl((X\setminus X_N)\cup B_N\bigr).
\]
Each $X\setminus X_N$ is $F_\sigma$, so this expression shows directly
that $A$ is $F_{\sigma\delta}$ in $X$. As $|A|<\mathfrak b$,
Lemma~\ref{lem:diagonal-collapse} implies that $A$ is $F_\sigma$.
Since $A$ was arbitrary, $X$ is a $Q$-space.
\end{proof}

\begin{thm}\label{thm:q1delta-equals-at}
$\mathfrak q_{1\Delta}=\mathfrak{at}=\mathfrak q_2$.
\end{thm}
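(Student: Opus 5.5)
We need $\mathfrak q_{1\Delta}=\mathfrak{at}=\mathfrak q_2$. Theorem~\ref{thm:at-equals-q2} already gives $\mathfrak{at}=\mathfrak q_2$, and Lemma~\ref{lem:q2-leq-q1delta} gives $\mathfrak q_2\leq\mathfrak q_{1\Delta}$. So the remaining task is $\mathfrak q_{1\Delta}\leq\mathfrak q_2$, or equivalently $\mathfrak q_{1\Delta}\leq\mathfrak{at}$.

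For this we must produce, for each witness to $\mathfrak{at}$, a second-countable $T_1$ non-$Q$-space with a $G_\delta$-diagonal of the same cardinality. The shortest route uses a witness to $\mathfrak q_2$ directly. Let $X$ be a second-countable Hausdorff non-$Q$-space with $|X|=\mathfrak q_2$. It is certainly $T_1$, so it suffices to show that every second-countable Hausdorff space has a $G_\delta$-diagonal.

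Fix a countable base $\mathcal B$ and enumerate the pairs of disjoint members of $\mathcal B$ as $(U_k,V_k)_{k<\omega}$. The complement of the diagonal is then
\[
X^2\setminus\Delta_X=\bigcup_{k<\omega}U_k\times V_k .
\]
This holds because Hausdorffness separates any two distinct points by disjoint basic sets, and each $U_k\times V_k$ misses $\Delta_X$ since $U_k\cap V_k=\emptyset$. The right-hand side is only a countable union of open sets, so this identity alone does not make $\Delta_X$ a $G_\delta$. To fix this, rewrite each rectangle via closed sets. In a second-countable space, an open rectangle $U\times V$ is a countable union of sets $\overline{U'}\times\overline{V'}$ with $U',V'\in\mathcal B$, provided we can choose basic $U'$ with $\overline{U'}\subseteq U$. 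That choice requires regularity, which we do not have.

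So we instead work directly with the tree representation from Theorem~\ref{thm:at-equals-q2}. Using the closed sets $C_k^0,C_k^1$ defined there, Hausdorffness gives
\[
X^2\setminus\Delta_X=\bigcup_k\bigl(U_k\times V_k\bigr).
\]
Equivalently,
\[
\Delta_X=\bigcap_k\bigl(C_k^0\times X\cup X\times C_k^1\bigr),
\]
and this is an intersection of closed sets, so it only shows that $\Delta_X$ is closed. That is again insufficient.

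Since the Hausdorff approach does not directly give a $G_\delta$-diagonal, we modify the space instead. Take the space $(X,\tau^-)$ of Lemma~\ref{lem:tree-fsigma-separator}, built from an $\mathfrak{at}$-witness $\mathcal A,\mathcal Y$ with $|\mathcal A|=\mathfrak{at}$ and $\mathcal Y$ not weakly separated. Lemma~\ref{prop:at-le-b} gives $|\mathcal A\setminus\mathcal Y|<\mathfrak b$ unless $\mathfrak{at}=\mathfrak b$; in the contrapositive form, the lemma shows that $H$ is not $F_\sigma$, so $(X,\tau^-)$ is a non-$Q$-space. We expect the case $\mathfrak{at}=\mathfrak b$ to need separate handling via $\mathfrak{at}\leq\mathfrak q\leq\mathfrak b$, which forces $\mathfrak{at}=\mathfrak q$, together with a classical $\mathfrak q$-witness, a set of reals that has a $G_\delta$-diagonal since it is metrizable.

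It then remains to check that $\tau^-$ has a $G_\delta$-diagonal. Set
\[
W_m=\bigcup_{s\in T_m}N_{T_m\setminus\{s\}}^2 .
\]
Each $W_m$ is open. A pair $(S,R)$ lies in $N_{T_m\setminus\{s\}}^2$ exactly when $S\cap T_m=R\cap T_m=\{s\}$, so $W_m$ consists of the pairs of subtrees that both have exactly the single node $s$ at level $m$, for some $s$. Since members of $X$ are branches or general subtrees, not all of them have a single node per level, so this $W_m$ does not cover the diagonal. The fix we would try first is
\[
W_m=\bigcup_{F\subseteq T_m}\bigl(N_{T_m\setminus F}\times N_{T_m\setminus F}\bigr)\cap\{\text{pairs with the same level-}m\text{ trace}\}.
\]
The difficulty is that "trace exactly $F$" is not open in $\tau^-$. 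This openness problem is the main obstacle. We expect to resolve it by refining the topology, for instance by also adding the sets $\{S: t\in S\}$ for some nodes $t$ while keeping the space $T_1$, and then rechecking that Lemma~\ref{lem:tree-fsigma-separator}'s argument still forces $H$ to be non-$F_\sigma$.
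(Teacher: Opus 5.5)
Your reduction to $\mathfrak q_{1\Delta}\leq\mathfrak{at}$ is correct. So is your treatment of the case $\mathfrak{at}=\mathfrak b$: then $\mathfrak{at}=\mathfrak q$, and a separable metrizable non-$Q$-space of size $\mathfrak q$ has a $G_\delta$-diagonal. But the proposal never supplies what the remaining inequality needs, namely a second-countable $T_1$ space with a $G_\delta$-diagonal that is not a $Q$-space. You rightly drop the idea that every second-countable Hausdorff space has a $G_\delta$-diagonal; the paper proves this only for Urysohn spaces. Your search for a $G_\delta$-diagonal in $\tau^-$ also fails. Your final suggestion is to refine $\tau^-$ by adding sets $P_t=\{S\in X:t\in S\}$ and then recheck that $H$ is not $F_\sigma$. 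This goes in the wrong direction. A finer topology has more $F_\sigma$ sets, while Lemma~\ref{lem:tree-fsigma-separator} only tells you that $H$ is not $F_\sigma$ in $\tau^-$ itself. Moreover, any refinement of $\tau^-$ is still second-countable and Hausdorff. So if such a refinement with a $G_\delta$-diagonal, in which $H$ stays non-$F_\sigma$, existed for every witness, you would obtain $\mathfrak q_{2\Delta}=\mathfrak q_2$. The paper leaves that open in Problem~\ref{prob:remaining-q-cardinals}(4).

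The missing idea is to give up Hausdorffness. On the same set $X$, use only the ``positive'' topology $\tau^+$ generated by the sets $P_t$, rather than any refinement of $\tau^-$. This topology is $T_1$, and $\Delta_X=\bigcap_{n<\omega}\bigcup_{t\in T_n}(P_t\times P_t)$. The identity holds because every member of $X$ meets every level, while distinct members have finite intersection and hence disjoint traces on some level. The bridge back to $\tau^-$ is that each $P_t$ is $\tau^-$-closed, since $X\setminus P_t=N_{\{t\}}$. Hence every $\tau^+$-open set is $F_\sigma$ in $\tau^-$, and every $G_\delta$ subset of $(X,\tau^+)$ is $F_{\sigma\delta}$ in $\tau^-$. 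Applying Lemma~\ref{lem:diagonal-collapse} to $H$, with $|H|<\mathfrak b$, turns this into $F_\sigma$ in $\tau^-$, and Lemma~\ref{lem:tree-fsigma-separator} then yields a weak separator. In your direct formulation, with $\mathfrak{at}<\mathfrak b$, this says $H$ cannot be $G_\delta$ in $\tau^+$, so $(X,\tau^+)$ witnesses $\mathfrak q_{1\Delta}\leq\mathfrak{at}$. The paper runs the same argument contrapositively, fixing $\kappa<\mathfrak q_{1\Delta}$. Since $\mathfrak q_{1\Delta}\leq\mathfrak b$, this gives $|H|,|\mathcal Z|\leq\kappa<\mathfrak b$ automatically and makes your case split at $\mathfrak{at}=\mathfrak b$ unnecessary.
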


\begin{proof}
By Lemma~\ref{lem:q2-leq-q1delta} and Theorem~\ref{thm:at-equals-q2},
it remains only to show that $\mathfrak q_{1\Delta}\leq\mathfrak{at}$.

Fix $\kappa<\mathfrak q_{1\Delta}$, an almost disjoint family
$\mathcal A$ of infinite subtrees of an $\omega$-tree $T$ with
$|\mathcal A|\leq\kappa$, and a subfamily $\mathcal Y\subseteq\mathcal A$.
Put $\mathcal Z=\mathcal A\setminus\mathcal Y$.

Choose branches by König's lemma and form $H$, $X$, and $\tau^-$ as in
Lemma~\ref{lem:tree-fsigma-separator}. On the same set $X$, consider
the topology $\tau^+$ generated by
\[
  P_t=\{S\in X:t\in S\}\qquad(t\in T).
\]
Their finite intersections form a countable base. If $S, R\in X$ are distinct,
let $t\in S\setminus R$.
It follows that $S\in P_t$ and $R\notin P_t$, so $(X,\tau^+)$ is $T_1$.
Moreover,
\begin{equation}\label{eq:positive-tree-diagonal}
  \Delta_X
  =\bigcap_{n<\omega}\bigcup_{t\in T_n}(P_t\times P_t).
\end{equation}
Indeed, given $S \in X$, for every $n \in \omega$ there exists $t \in S \cap T_n$,
and then $S \in P_t$, so $(S,S) \in P_t \times P_t$.
Conversely, if $S$ and $R$ are distinct elements of $X$, let $n$ be such that $S\cap R\cap T_n=\emptyset$.
Then for no $t\in T_n$ do we have $(S,R)\in P_t\times P_t$.
Thus, $(X,\tau^+)$ has a $G_\delta$-diagonal.

Since $|X|\leq\kappa<\mathfrak q_{1\Delta}$, the space $(X,\tau^+)$
is a $Q$-space, so $H$ is $G_\delta$ in $\tau^+$.
Each $P_t$ is $\tau^-$-closed, because $X\setminus P_t=N_{\{t\}}$.
Hence every $\tau^+$-open set is $F_\sigma$ in $\tau^-$, and $H$ is
$F_{\sigma\delta}$ in $\tau^-$.
As $|H|\leq\kappa<\mathfrak q_{1\Delta}\leq\mathfrak b$,
Lemma~\ref{lem:diagonal-collapse} implies that $H$ is an $F_\sigma$ set in $\tau^-$.
Since also $|\mathcal Z|<\mathfrak b$, Lemma~\ref{lem:tree-fsigma-separator}
grants a weak separator of $\mathcal Y$ from $\mathcal Z$.
Thus $\kappa<\mathfrak{at}$, proving the remaining inequality.
\end{proof}

We next identify the Urysohn $G_\delta$-diagonal cardinal with the Urysohn cardinal.
The first part of the following lemma is a folklore result, but we include a proof for completeness.

\begin{lemma}\label{lem:second-countable-urysohn-gdelta-diagonal}
Every second-countable Urysohn space has a $G_\delta$-diagonal.
Consequently,
$\mathfrak q_{2\frac{1}{2}\Delta}=\mathfrak q_{2\frac{1}{2}}$.
\end{lemma}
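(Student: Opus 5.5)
The plan is to build the diagonal directly from pairs of basic sets with disjoint closures, and then get the cardinal equality from the already-noted inequality $\mathfrak q_{2\frac{1}{2}}\leq\mathfrak q_{2\frac{1}{2}\Delta}$.

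Fix a countable base $\mathcal B$ of a second-countable Urysohn space $X$. Enumerate as $(U_k,V_k)_{k<\omega}$ all ordered pairs of members of $\mathcal B$ with $\overline{U_k}\cap\overline{V_k}=\emptyset$; there are only countably many such pairs. I claim that
\[
  X^2\setminus\Delta_X=\bigcup_{k<\omega}\bigl(\overline{U_k}\times\overline{V_k}\bigr).
\]
This shows the complement of the diagonal is $F_\sigma$ in $X^2$, since each $\overline{U_k}\times\overline{V_k}$ is closed in $X^2$. Hence $\Delta_X$ is $G_\delta$, being the intersection of the open sets $X^2\setminus(\overline{U_k}\times\overline{V_k})$.

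For the inclusion $\supseteq$, each $\overline{U_k}\times\overline{V_k}$ misses the diagonal because the two closures are disjoint. For $\subseteq$, let $x\neq y$. The Urysohn property gives open sets $O_x\ni x$ and $O_y\ni y$ with $\overline{O_x}\cap\overline{O_y}=\emptyset$. Choose basic sets $U\in\mathcal B$ and $V\in\mathcal B$ with $x\in U\subseteq O_x$ and $y\in V\subseteq O_y$. Then $\overline U\subseteq\overline{O_x}$ and $\overline V\subseteq\overline{O_y}$, so $\overline U\cap\overline V=\emptyset$. Thus $(U,V)$ is one of the pairs $(U_k,V_k)$, and $(x,y)\in\overline{U_k}\times\overline{V_k}$.

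The only point needing care is this last step. We pass to basic sets so that the union is countable, and we must check that closures of the smaller basic sets remain disjoint, which holds by monotonicity of closure. The rest is routine.

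For the consequence: every second-countable Urysohn non-$Q$-space automatically has a $G_\delta$-diagonal by the first part. So the class of spaces defining $\mathfrak q_{2\frac{1}{2}\Delta}$ coincides with the class defining $\mathfrak q_{2\frac{1}{2}}$, which gives $\mathfrak q_{2\frac{1}{2}\Delta}\leq\mathfrak q_{2\frac{1}{2}}$. Together with the inequality $\mathfrak q_{2\frac{1}{2}}\leq\mathfrak q_{2\frac{1}{2}\Delta}$ noted after Definition~\ref{def:q-diagonal-variants}, equality follows.
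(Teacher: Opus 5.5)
Your proof is correct and is essentially the paper's argument. The paper writes $\Delta_X$ as the intersection of $\bigl(X\setminus\overline U\bigr)^2\cup\bigl(X\setminus\overline V\bigr)^2$ over the countably many pairs $U,V\in\mathcal B$ with $\overline U\cap\overline V=\emptyset$, which is just the symmetrized complement of your closed rectangles $\overline U\times\overline V$, and the consequence follows exactly as you describe. One small point: your enumeration $(U_k,V_k)_{k<\omega}$ presupposes that at least one such pair exists, i.e. $|X|\geq 2$, which the paper also assumes explicitly; the one-point case is trivial.
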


\begin{proof}
We may assume that the space $X$ has at least two points.

Fix a countable base $\mathcal B$ for a second-countable Urysohn
space $X$. For every pair $U,V\in\mathcal B$ with
$\overline U\cap\overline V=\emptyset$, the set
\[
  \mathcal U_{U,V}=\{X\setminus\overline U,\ X\setminus\overline V\}
\]
is an open cover of $X$.
Let $\mathcal V=\{\mathcal U_{U,V}: U,V\in\mathcal B, \overline U\cap\overline V=\emptyset\}$, which is countable.
As $X$ is Urysohn and has at least two points, $\mathcal V\neq\emptyset$.

We show that:
\begin{equation}\label{eq:urysohn-gdelta-diagonal}
  \Delta_X=\bigcap_{\mathcal U\in\mathcal V}
    \bigcup_{O\in\mathcal U}(O\times O).
\end{equation}

If $(x,x)\in\Delta_X$, then for every $\mathcal U\in\mathcal V$ there is $O\in\mathcal U$ with $x\in O$, so $(x,x)\in O\times O$.

Conversely, given distinct $x,y\in X$, by $T_{2\frac{1}{2}}$, there exist $U,V\in\mathcal B$ such that $x\in U$, $y\in V$, and $\overline U\cap\overline V=\emptyset$. No member of $\mathcal U_{U,V}$ contains both $x$ and $y$.
Thus $(x, y)$ is not in the right side of \eqref{eq:urysohn-gdelta-diagonal}, and the equality is established.
\end{proof}

%% file: sections/forcing.tex
\section{A model with $\mathfrak{ap}<\mathfrak{at}$}
\label{sec:ap-below-at-model}

In this section we prove a consistency result separating $\mathfrak{ap}$ from
$\mathfrak{at}$, and therefore from $\mathfrak q_{2}$.
The construction is based on Brendle's model for
$\mathfrak{ap}<\mathfrak q$ \cite{brendle1999dow}.

We use the forcing convention that $q\leq p$ means that $q$ is stronger than
$p$.  A subset of a forcing preorder is centered if every finite subfamily has
a common extension; a forcing is $\sigma$-centered if it is the union of
countably many centered sets.  The countable chain condition (ccc) means that
every antichain is countable.  If $\mathbb P\subseteq\mathbb Q$ are forcing
preorders, then $\mathbb P$ is a complete suborder of $\mathbb Q$ if every
maximal antichain of $\mathbb P$ remains maximal in $\mathbb Q$.

Following \cite{kunen2011set}, if $\mathbb P$ is a forcing preorder and $\tau$ is a $\mathbb P$-name, a $\mathbb P$-nice name for a subset of $\tau$ is a $\mathbb P$-name of the form $\bigcup_{\sigma \in \dom \tau}\{\sigma\}\times A_\sigma$ for some family $(A_\sigma:\sigma\in\dom \tau)$ of antichains of $\mathbb P$.

For undefined forcing concepts and standard facts about forcing, we refer the reader to \cite{kunen2011set}.

\subsection{Intertwined families}
\label{subsec:intertwined-families}

The following notion, due to
Brendle \cite{brendle1999dow}, is the small almost disjoint configuration that will be preserved
throughout the iteration.
\begin{definition}\label{def:intertwined}
A pair $\langle\mathcal B,\mathcal C\rangle$ of disjoint subcollections of
$[\omega]^\omega$, each of cardinality $\aleph_1$, is \emph{intertwined} if
$\mathcal B\cup\mathcal C$ is an almost disjoint family and, whenever
$E\in[\omega]^\omega$ has infinite intersection with uncountably many members
of $\mathcal C$, then $E$ has infinite intersection with all but countably many
members of $\mathcal B$.
\end{definition}

The role of intertwined families is to keep $\mathfrak{ap}$ small throughout the forcing
iteration.  Indeed, if such a family survives to the final extension, then
$\mathfrak{ap}=\omega_1$: the family $\mathcal B\cup\mathcal C$ is an almost
disjoint family of size $\omega_1$, and the defining implication for intertwinedness
prevents $\mathcal C$ from being weakly separated from $\mathcal B$.

\subsection{Forcing blocks}
\label{subsec:forcing-blocks}

In this section, we define the forcing posets that will be used as building blocks for the iteration.

The forcing used here is the tree analogue of Brendle's forcing
$Q(\mathcal A)$ from \cite{brendle1999dow}.  In Brendle's argument, conditions
choose a finite increasing sequence of natural numbers while avoiding finitely
many members of an almost disjoint family $\mathcal A$ of branches of $2^{<\omega}$.  Here the
ambient object is not the fixed tree $2^{<\omega}$ and the members of the almost
disjoint family are not necessarily branches; instead, it is an arbitrary
finitely branching $\omega$-tree $T$, and $\mathcal A$ is an almost disjoint family of
infinite subtrees of $T$. The generic object is a subset of $T$
which is almost disjoint from every member of $\mathcal A$, but still meets
infinitely every ground-model set outside the ideal generated by $\mathcal A$.

For an $\omega$-tree $T$, write
\[
  T^{\uparrow<\omega}
  =
  \{\sigma\in T^{<\omega}:
    i<j<|\sigma|\Rightarrow
    \htt(\sigma(i))<\htt(\sigma(j))\}.
\]
Thus $T^{\uparrow<\omega}$ consists of finite sequences of nodes whose heights
strictly increase; no compatibility in the tree order is required.  The
following forcing is a tree version of Brendle's forcing.

\begin{definition}\label{def:QT}
Let $T$ be an $\omega$-tree and let $\mathcal X$ be an almost disjoint family of subtrees of $T$.
The forcing poset $Q_T(\mathcal X)$ consists of triples $p=(\sigma_p,h_p,\mathcal X_p)$ with
\[
  \sigma_p\in T^{\uparrow<\omega},\qquad
  h_p:T^{\uparrow<\omega}\to\omega,
  \qquad
  \mathcal X_p\in[\mathcal X]^{<\omega},
\]
such that, for every $i<|\sigma_p|$, $h_p(\sigma_p\restriction i)\leq\htt(\sigma_p(i))$.

For $p, q \in Q_T(\mathcal X)$, we define
$q\leq p$ if, and only if:
\begin{enumerate}[label=(\arabic*)]
    \item $\sigma_p\subseteq \sigma_q$;
    \item $h_q\geq h_p$ pointwise;
    \item $\mathcal X_q\supseteq\mathcal X_p$;
    \item $\sigma_q(i)\notin\bigcup \mathcal X_p$ for all
    $|\sigma_p|\leq i<|\sigma_q|$.
\end{enumerate}
Given $p\in Q_T(\mathcal X)$, the \emph{stem} of $p$ is $\sigma_p$ and the \emph{side condition} of $p$ is $\mathcal X_p$.
\end{definition}

In the notation above, the stem $\sigma_p$ is a finite initial approximation to a generic set of nodes, the
function $h_p$ imposes height lower bounds for future extensions, and the side
condition $\mathcal X_p$ lists the subtrees that future nodes must avoid.

We first record that these forcing posets are $\sigma$-centered.  In particular,
they have the countable chain condition and preserve cardinals and cofinalities.
\begin{lemma}\label{lem:QT-sigma-centered}
Let $T$ be an $\omega$-tree and let $\mathcal X$ be an almost disjoint family of subtrees of $T$. Then:
\begin{enumerate}[label=(\arabic*)]
    \item if $p,q\in Q_T(\mathcal X)$ have the same stem, then there exists
    $r\in Q_T(\mathcal X)$ such that $r\leq p,q$ and $r$ has the same stem as
    $p$ and $q$;
    \item $Q_T(\mathcal X)$ is $\sigma$-centered.
\end{enumerate}
\end{lemma}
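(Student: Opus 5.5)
For (1), given $p,q$ with $\sigma_p=\sigma_q=\sigma$, I will take $r=(\sigma,\max(h_p,h_q),\mathcal X_p\cup\mathcal X_q)$, where the maximum is computed pointwise. The first check is that $r$ is a condition. The function $\max(h_p,h_q)$ maps $T^{\uparrow<\omega}$ to $\omega$, and $\mathcal X_p\cup\mathcal X_q$ is a finite subset of $\mathcal X$. For $i<|\sigma|$ we have both $h_p(\sigma\restriction i)\leq\htt(\sigma(i))$ and $h_q(\sigma\restriction i)\leq\htt(\sigma(i))$, so the maximum also satisfies this bound. Next I verify $r\leq p$; the case $r\leq q$ is symmetric. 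Clauses (1)--(3) of the ordering are immediate. Clause (4) holds vacuously, because $|\sigma_r|=|\sigma_p|$ and so there are no new indices $i$ to check.

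For (2), I will partition $Q_T(\mathcal X)$ according to stems, setting $P_\sigma=\{p:\sigma_p=\sigma\}$ for each $\sigma\in T^{\uparrow<\omega}$. Since $T$ is countable, $T^{\uparrow<\omega}$ is countable, so there are only countably many pieces. Each $P_\sigma$ is centered: given finitely many members, apply (1) repeatedly, each time combining the lower bound obtained so far with the next member. At every stage the lower bound still has stem $\sigma$, so (1) remains applicable. Transitivity of $\leq$ then gives a common extension of the whole finite set. To justify transitivity, note that if $r\leq q\leq p$ with all three having the same stem, clause (4) is vacuous and the other clauses are transitive.

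No step here is a real obstacle. The only point needing care is that the new bound $\max(h_p,h_q)$ must still satisfy the condition requirement along the stem, and this works precisely because the two stems coincide. Note that $h_p$ is a total function on a countable set, so the conditions with a given stem need not be countably many. That is why the argument groups conditions by stem alone and does not try to count conditions.
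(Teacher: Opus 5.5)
Your proposal is correct and follows the paper's proof: for (1) you use the same common extension $(\sigma,\max(h_p,h_q),\mathcal X_p\cup\mathcal X_q)$, and for (2) you use the same partition by stem into countably many pieces, each centered by repeated use of (1). You also spell out details the paper leaves implicit: that $r$ is a condition, that clause (4) is vacuous, and that transitivity holds for same-stem conditions.
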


\begin{proof}
For (1), let $p=(\sigma,h_p,\mathcal X_p)$ and
$q=(\sigma,h_q,\mathcal X_q)$.
Define $r=(\sigma,h_r,\mathcal X_r)$, where
$\mathcal X_r=\mathcal X_p\cup\mathcal X_q$ and $h_r(\tau)=\max\{h_p(\tau),h_q(\tau)\}$ for every $\tau\in T^{\uparrow<\omega}$.
Then $r\leq p,q$ and $r$ has stem $\sigma$.

For (2), for each $\sigma\in T^{\uparrow<\omega}$ let $C_\sigma=\{p\in Q_T(\mathcal X):\text{the stem of }p\text{ is }\sigma\}$.
By (1) and a straightforward induction, each $C_\sigma$ is centered.
Since $T$ is countable, $T^{\uparrow<\omega}$ is countable, and $Q_T(\mathcal X)=\bigcup_{\sigma\in T^{\uparrow<\omega}}C_\sigma$.
Thus, $Q_T(\mathcal X)$ is $\sigma$-centered.
\end{proof}

Given an almost disjoint family $\mathcal X$ of subsets of a countable set $T$,
the \emph{ideal generated by $\mathcal X$ on $T$} is defined as follows.
\[
  \mathcal I_T(\mathcal X)
  =
  \{X\subseteq T:
    \exists\mathcal F\in[\mathcal X]^{<\omega}
    |X\setminus\bigcup\mathcal F|<\omega\}.
\]

We say that the ideal $\mathcal I_T(\mathcal X)$ is \emph{proper} if $T\notin\mathcal I_T(\mathcal X)$.
Notice that if $\mathcal X$ is infinite, then $\mathcal I_T(\mathcal X)$ is proper.

The following lemma records the basic separation property of $Q_T(\mathcal X)$.
Provided the ideal generated by $\mathcal X$ is proper, the generic set is
almost disjoint from every member of $\mathcal X$ and meets every
$\mathcal I_T(\mathcal X)$-positive set infinitely often.

\begin{lemma}\label{lem:QT-adds-separator}
Let $T$ be an $\omega$-tree and let
$\mathcal X$ be an almost disjoint family of subtrees of $T$.
Assume that $\mathcal I_T(\mathcal X)$ is proper.
Let $\dot D$ be a $Q_T(\mathcal X)$-name such that
\[
  Q_T(\mathcal X)\Vdash\dot D=\{t\in \check T:\exists p\in\dot G\ \exists i<|\sigma_p|\ (t=\sigma_p(i))\}.
\]
Then:
\begin{enumerate}[label=(\arabic*)]
    \item for every $A \in \mathcal X$, $Q_T(\mathcal X)\Vdash |\dot D\cap \check A|<\omega$;
    \item for every $B \in [T]^\omega\setminus \mathcal I_T(\mathcal X)$, $Q_T(\mathcal X)\Vdash |\dot D\cap \check B|=\omega$.
\end{enumerate}
\end{lemma}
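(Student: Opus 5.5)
Both parts will be proved by density arguments in $Q_T(\mathcal X)$, using the ordering clause (4) for part (1) and the height function $h$ together with properness for part (2).

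\emph{Part (1).} Fix $A\in\mathcal X$. The set of conditions $q$ with $A\in\mathcal X_q$ is dense. Given $p$, the triple $(\sigma_p,h_p,\mathcal X_p\cup\{A\})$ is a condition extending $p$. Suppose $q\in\dot G$ with $A\in\mathcal X_q$. Every $r\in \dot G$ is compatible with $q$, so they have a common extension $s\le q,r$. By clause (4), every node of $\sigma_s$ beyond position $|\sigma_q|$ avoids $A$. Since $\sigma_r\subseteq\sigma_s$, every entry of $\sigma_r$ lying in $A$ is among $\sigma_q(i)$ for $i<|\sigma_q|$. Hence $\dot D\cap A\subseteq\operatorname{ran}\sigma_q$, which is finite.

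\emph{Part (2).} Fix $B\in[T]^\omega\setminus\mathcal I_T(\mathcal X)$ and $n<\omega$. I will show that the conditions whose stem contains a node of $B$ of height $\ge n$ are dense. Take any $p$. Since $B\notin\mathcal I_T(\mathcal X)$, the set $B\setminus\bigcup\mathcal X_p$ is infinite. Because $T$ has finite levels, this set contains nodes of arbitrarily large height. Choose $t$ in it with
\[\htt(t)\ge\max\{n,\ h_p(\sigma_p),\ \htt(\sigma_p(|\sigma_p|-1))+1\}.\]
Let $q=(\sigma_p{}^\frown t,h_p,\mathcal X_p)$. This is a condition: the only new requirement is $h_p(\sigma_p)\le\htt(t)$, and $\sigma_p{}^\frown t\in T^{\uparrow<\omega}$ since heights increase. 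Moreover $q\le p$ by clause (4), because $t\notin\bigcup\mathcal X_p$.

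By genericity, $\dot D\cap B$ then contains nodes of arbitrarily large height and is therefore infinite. Since $\dot D$ is exactly the union of the stems of conditions in $\dot G$, and all these stems are compatible and hence linearly extend one another, membership of $t$ in $\dot D$ follows directly.

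The argument does not actually need properness of $\mathcal I_T(\mathcal X)$ beyond the positivity of $B$. Properness only guarantees that such $B$ exist, for example $B=T$. Both parts are routine, and the only point needing care is that the choice of $t$ in part (2) respects the constraint imposed by $h_p$, which the height bound above ensures.
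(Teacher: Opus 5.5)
Your proof is correct and follows the paper's argument essentially step for step. Part (1) uses density of $\{q: A\in\mathcal X_q\}$ together with clause (4) applied to a common extension. Part (2) uses density of conditions whose stem contains a node of $B$ of height at least $n$, obtained by appending a node of $B\setminus\bigcup\mathcal X_p$ of height at least $h_p(\sigma_p)$ and above the current stem. The only nit is that the term $\htt(\sigma_p(|\sigma_p|-1))+1$ is undefined when $\sigma_p=\emptyset$ and should then be dropped; the paper avoids this by maximizing over $\{\htt(\sigma_p(i)):i<|\sigma_p|\}$.
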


\begin{proof}
For (1), fix $A \in \mathcal X$.  The set
$\mathcal D_A=\{p\in Q_T(\mathcal X):A\in\mathcal X_p\}$ is dense in
$Q_T(\mathcal X)$.
Working in a countable transitive model $M$, let $G$ be $Q_T(\mathcal X)$-generic over $M$.
Let $p \in G\cap \mathcal D_A$.
We claim that $\dot D_G\cap A\subseteq \{\sigma_p(i):i<|\sigma_p|\}$.

Given $t \in \dot D_G\cap A$, there are $q \in G$ and $i<|\sigma_q|$ such that $t=\sigma_q(i)$.
There exists $r \in G$ such that $r \leq p,q$.
If $i\geq|\sigma_p|$, then
$t=\sigma_q(i)=\sigma_r(i)\notin \bigcup \mathcal X_p$, contradicting
$t\in A\subseteq\bigcup\mathcal X_p$.  Hence $i<|\sigma_p|$.

For (2), fix $B\in[T]^\omega\setminus \mathcal I_T(\mathcal X)$ and $m\in\omega$, and define
\[
\mathcal D_{B,m}=\{p\in Q_T(\mathcal X):\exists i<|\sigma_p|(\sigma_p(i)\in B \land \htt(\sigma_p(i))\geq m)\}.
\]

Then $\mathcal D_{B,m}$ is dense in $Q_T(\mathcal X)$: given $p \in Q_T(\mathcal X)$, let $\bar m\in \omega$ be such that
\[
\bar m\geq \max(\{m, h_p(\sigma_p)\}\cup\{\htt(\sigma_p(i)):i<|\sigma_p|\})+1.
\]
As $B\notin\mathcal I_T(\mathcal X)$, $B\setminus \bigcup\mathcal X_p$ is infinite.
Since every level of $T$ is finite, $T_{<\bar m}$ is finite.
Thus, there is $t\in B\setminus \bigcup\mathcal X_p$ such that $\htt(t)\geq \bar m$.
Then $q=(\sigma_p^\frown\langle t\rangle,h_p,\mathcal X_p)$ is a condition extending $p$ and belonging to $\mathcal D_{B,m}$.

Now, given $B$ and $m$, every $p \in \mathcal D_{B,m}$ forces
$\dot D\cap \check B\not\subseteq\check{T_{<m}}$.
As $\mathcal D_{B,m}$ is dense, this means that $Q_T(\mathcal X)\Vdash \dot D\cap \check B\not\subseteq \check{T_{<m}}$.
Since this holds for every $m \in \omega$ and each $T_{<m}$ is finite, we have $Q_T(\mathcal X)\Vdash |\dot D\cap \check B|=\omega$.
\end{proof}

If $T$ has height $\omega$, then the infinite branches of $T$ are exactly the
chains that meet every level of $T$.
Such a branch meets each level in exactly one node.

We will use the following notation.

\begin{definition}\label{def:QT-extensible}
  Let $T$ be an $\omega$-tree, and let $\mathcal X$ be an almost disjoint
  family of subtrees of $T$.
  Let $p\in Q_T(\mathcal X)$ and let
  $\tau\in T^{\uparrow<\omega}$.
  We say that $p$ is extensible to $\tau$ if $\sigma_p\subseteq\tau$ and, for every
  $|\sigma_p|\leq i<|\tau|$,
  \[
    h_p(\tau\restriction i)\leq\htt(\tau(i))
    \quad\text{and}\quad
    \tau(i)\notin\bigcup \mathcal X_p.
  \]
\end{definition}
Thus, $p=(\sigma_p,h_p,\mathcal X_p)$ is extensible to $\tau$ exactly when
one can replace the stem of $p$ by $\tau$, keeping the same height function and
side condition, and obtain an extension of $p$.

\begin{lemma}\label{lem:QT-preparation}
Let $T$ be an $\omega$-tree, let $\mathcal X$ be an almost disjoint family of
subtrees of $T$, and let
\[
  q_\beta=(\sigma,h_\beta,\mathcal R\cup\mathcal S_\beta)
  \qquad(\beta\in I)
\]
be an uncountable family of conditions in $Q_T(\mathcal X)$ with the same stem $\sigma$.
Assume that the finite sets $\mathcal S_\beta$ are pairwise
disjoint and disjoint from $\mathcal R$.
Let $h':T^{\uparrow<\omega}\to\omega$ be such that $(\sigma,h',\mathcal R)$ is a condition and $h_\beta\geq h'$ for every $\beta\in I$.
Then there is $h\geq h'$ such that $q=(\sigma,h,\mathcal R)$ is a condition
and, for every $\tau\in T^{\uparrow<\omega}$ to which $q$ is extensible, the set
\[
  \{\beta\in I:q_\beta\text{ is extensible to }\tau\}
\]
is uncountable.
\end{lemma}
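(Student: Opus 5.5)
The plan is to define $h$ by recursion on $\tau\in T^{\uparrow<\omega}$ and then to prove the conclusion by induction on $|\tau|$. For $\tau$ with $\sigma\not\subsetneq\tau$, and in particular for the proper initial segments $\sigma\restriction i$, put $h(\tau)=h'(\tau)$. Since $(\sigma,h',\mathcal R)$ is a condition, this makes $q=(\sigma,h,\mathcal R)$ a condition as well. For $\tau\supseteq\sigma$ let
\[
I_\tau=\{\beta\in I:q_\beta\text{ is extensible to }\tau\}.
\]
If $I_\tau$ is countable, put $h(\tau)=h'(\tau)$. The inductive claim is that whenever $q$ is extensible to $\tau$, the set $I_\tau$ is uncountable. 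The base case $\tau=\sigma$ is trivial, since $I_\sigma=I$.

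Now suppose $I_\tau$ is uncountable. Since $I_\tau=\bigcup_{n,k}\{\beta\in I_\tau: h_\beta(\tau)\leq n,\ |\mathcal S_\beta|=k\}$ is a countable union, we may choose $n_\tau,k$ and an uncountable $J_\tau\subseteq I_\tau$ such that $h_\beta(\tau)\leq n_\tau$ and $|\mathcal S_\beta|=k$ for all $\beta\in J_\tau$. The key auxiliary claim is the following: there is $m_\tau$ such that for every $t\in T$ with $\htt(t)\geq m_\tau$, the set $\{\beta\in J_\tau:t\notin\bigcup\mathcal S_\beta\}$ is uncountable. Granting this, set
\[
h(\tau)=\max\{h'(\tau),n_\tau,m_\tau\}.
\]

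For the inductive step, let $q$ be extensible to $\tau^\frown\langle t\rangle$. Then $q$ is extensible to $\tau$, so $I_\tau$ is uncountable by induction, and moreover $\htt(t)\geq h(\tau)$ and $t\notin\bigcup\mathcal R$. By the auxiliary claim there are uncountably many $\beta\in J_\tau$ with $t\notin\bigcup\mathcal S_\beta$. Each such $\beta$ also satisfies $h_\beta(\tau)\leq n_\tau\leq\htt(t)$. Since $q_\beta$ is already extensible to $\tau$, it is therefore extensible to $\tau^\frown\langle t\rangle$, so $I_{\tau^\frown\langle t\rangle}$ is uncountable.

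The main obstacle is the auxiliary claim, because a single node, such as one near the root, may lie in uncountably many members of $\mathcal X$. I would argue by contradiction. Suppose the claim fails. Then for each $m$ there is $t_m$ with $\htt(t_m)\geq m$ such that all but countably many $\beta\in J_\tau$ satisfy $t_m\in\bigcup\mathcal S_\beta$. Intersecting these countably many co-countable subsets of $J_\tau$, we can pick distinct $\beta,\beta'\in J_\tau$ such that every $t_m$ lies in both $\bigcup\mathcal S_\beta$ and $\bigcup\mathcal S_{\beta'}$.

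The nodes $t_m$ have unbounded heights, and the levels of $T$ are finite, so infinitely many of them are distinct. Because $\mathcal S_\beta$ and $\mathcal S_{\beta'}$ are finite, the pigeonhole principle gives $A\in\mathcal S_\beta$ and $A'\in\mathcal S_{\beta'}$ that both contain infinitely many of these $t_m$. Since $\mathcal S_\beta\cap\mathcal S_{\beta'}=\emptyset$, we have $A\neq A'$, while $A\cap A'$ is infinite. This contradicts the almost disjointness of $\mathcal X$, which proves the claim and completes the construction of $h$. Finally, $h\geq h'$ holds by construction.
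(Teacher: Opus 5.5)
Your proof is correct. Its key step matches the paper's: your auxiliary claim is the paper's assertion that the set $E_\tau$ of nodes lying in $\bigcup\mathcal S_\beta$ for all but countably many $\beta\in J_\tau$ is finite. You prove it the same way, by intersecting countably many co-countable sets, picking two distinct indices and using that $\bigcup\mathcal S_\beta\cap\bigcup\mathcal S_{\beta'}$ is finite. So your $m_\tau$ is simply $\max\{\htt(t):t\in E_\tau\}+1$.

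Where you differ is the organization.
\begin{itemize}
\item The paper builds $h$ through a recursion $(g_n:n<\omega)$ with six bookkeeping clauses. It raises $g_{n+1}(\tau)$ only when $(\sigma,g_n,\mathcal R)$ is already extensible to $\tau$, so it must prove that $I_\tau$ is uncountable inside the recursion before it can define $g_{n+1}(\tau)$.
\item You observe that $I_\tau$ depends only on the conditions $q_\beta$, not on $h$. So $h(\tau)$ can be defined outright: raise it whenever $I_\tau$ is uncountable, and otherwise keep $h'(\tau)$. The fact that extensibility of $q$ to $\tau$ forces $I_\tau$ to be uncountable then becomes a separate induction on $|\tau|$, run after $h$ is fixed.
\end{itemize}
Your $h$ may be larger than the paper's at nodes to which $q$ is not extensible. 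This is harmless, because the condition requirement only involves proper initial segments of $\sigma$, where you keep $h=h'$. What you gain is a shorter proof without the $g_n$ machinery.

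Three small points:
\begin{itemize}
\item The first clause of your definition should read $\sigma\not\subseteq\tau$ rather than $\sigma\not\subsetneq\tau$. Read literally, it pins $h(\sigma)=h'(\sigma)$, and then the inductive step from $\sigma$ to $\sigma^\frown\langle t\rangle$ fails.
\item In the pigeonhole step, choose the pair $(A,A')\in\mathcal S_\beta\times\mathcal S_{\beta'}$ simultaneously, so that infinitely many of the nodes $t_m$ lie in $A\cap A'$ itself.
\item The refinement $|\mathcal S_\beta|=k$ is never used and can be dropped.
\end{itemize}
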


\begin{proof}
For each $n<\omega$, put
$L_n=\{\tau\in T^{\uparrow<\omega}:|\tau|=n\}$ and
$L_{<n}=\bigcup_{m<n}L_m$.
We define a sequence of functions $(g_n:n<\omega)$ such that, for every
$n<\omega$:
\begin{enumerate}[label=(\arabic*)]
    \item $g_n:T^{\uparrow<\omega}\to\omega$;
    \item $g_n|(T^{\uparrow<\omega}\setminus L_{<n})=h'|(T^{\uparrow<\omega}\setminus L_{<n})$;
    \item for all $m<n$, $g_n|L_{<m}=g_m|L_{<m}$ and $g_m\leq g_n$ pointwise;
    \item $(\sigma, g_n,\mathcal R)\in Q_T(\mathcal X)$;
    \item for all $\tau \in L_n$, if $(\sigma,g_n,\mathcal R)$ is not extensible to $\tau$, then $g_{n+1}(\tau)=h'(\tau)$;
    \item for all $\tau \in L_n$, if $(\sigma,g_n,\mathcal R)$ is extensible to $\tau$, then for every $t\in T$ such that
    $\tau^\frown\langle t\rangle\in T^{\uparrow<\omega}$,
    $t\notin\bigcup\mathcal R$, and
    $\htt(t)\geq g_{n+1}(\tau)$, the set
    $\{\beta\in I:q_\beta\text{ is extensible to }\tau^\frown\langle t\rangle\}$ is uncountable.
\end{enumerate}

We proceed by recursion on $n$.
For $n=0$, define $g_0=h'$.  Then (1)--(4) hold, and there is nothing to check for (5) and (6).

Assume that $(g_m:m\leq n)$ has been defined so that conditions (1)--(6) hold up to this point.
We define $g_{n+1}$ as follows.
For $\tau \in L_{<n}$, put $g_{n+1}(\tau)=g_n(\tau)$.
For $\tau \in T^{\uparrow<\omega}\setminus L_{<n+1}$, put $g_{n+1}(\tau)=h'(\tau)$.

Now fix $\tau\in L_n$.  If $(\sigma,g_n,\mathcal R)$ is not extensible to
$\tau$, put $g_{n+1}(\tau)=h'(\tau)$.  Otherwise, we first show that
\[
  I_\tau=\{\beta\in I:q_\beta\text{ is extensible to }\tau\}
\]
is uncountable.  If $\tau=\sigma$, this is immediate.  If
$\tau\supsetneq\sigma$, write $\tau=\eta^\frown\langle t\rangle$, where
$\eta\in L_{n-1}$.  Since $(\sigma,g_n,\mathcal R)$ is extensible to $\tau$,
we have
\[
  t\notin\bigcup\mathcal R
  \qquad\text{and}\qquad
  \htt(t)\geq g_n(\eta).
\]
Moreover, $(\sigma,g_{n-1},\mathcal R)$ is extensible to $\eta$, because
$g_{n-1}$ and $g_n$ agree on all proper initial segments of $\eta$.
By the induction hypothesis, applied to $\eta\in L_{n-1}$, the set
$\{\beta\in I:q_\beta\text{ is extensible to }\eta^\frown\langle t\rangle\}$
is uncountable. 
Hence, $I_\tau$ is uncountable.

Since the map $\beta\in I_\tau\longmapsto h_\beta(\tau)\in\omega$
has countable range, there is an uncountable set $J_\tau\subseteq I_\tau$ and a number
$j_0<\omega$ such that $h_\beta(\tau)\leq j_0$ for every $\beta\in J_\tau$.

For $\beta\in J_\tau$, put
\[
  U_\beta=\bigcup\mathcal S_\beta.
\]
The family $(U_\beta:\beta\in J_\tau)$ is pairwise almost disjoint, since the
$\mathcal S_\beta$'s are pairwise disjoint finite subsets of the almost
disjoint family $\mathcal X$.
Let
\[
  E_\tau=
  \{t\in T:\{\beta\in J_\tau:t\notin U_\beta\}\text{ is countable}\}.
\]

Thus, $E_\tau$ is the set of nodes which are forbidden by all but countably many
of the side conditions $\mathcal S_\beta$.

We claim that $E_\tau$ is finite.
For each $t\in E_\tau$, the set $\{\beta\in J_\tau:t\in U_\beta\}$
is cocountable in $J_\tau$.
Since $E_\tau$ is countable, $\bigcap_{t\in E_\tau}\{\beta\in J_\tau:t\in U_\beta\}$ is cocountable in $J_\tau$.
Choose distinct
$\beta,\gamma$ in this intersection.
Then $E_\tau\subseteq U_\beta\cap U_\gamma$.
Since $U_\beta$ and $U_\gamma$ are almost disjoint, $E_\tau$ is finite, as claimed.
Let
\[
  g_{n+1}(\tau)=\max(\{h'(\tau), j_0\}\cup\{\htt(t):t\in E_\tau\})+1.
\]
This completes the definition of $g_{n+1}$.

We now check the requirements.  Requirements (1)--(3) are immediate from the
definition of $g_{n+1}$ and the induction hypothesis.  Requirement (4) also
holds.
Indeed, let $i<|\sigma|$ and put $\eta=\sigma\restriction i$.
We must show that
\[
  g_{n+1}(\eta)\leq \htt(\sigma(i)).
\]
If $|\eta|<n$, then $g_{n+1}(\eta)=g_n(\eta)$, and the inequality follows from
$(\sigma,g_n,\mathcal R)\in Q_T(\mathcal X)$.
If $|\eta|>n$, then $g_{n+1}(\eta)=h'(\eta)$, and the inequality follows from
$(\sigma,h',\mathcal R)\in Q_T(\mathcal X)$.
Finally, if $|\eta|=n$, then $\eta$ is a proper initial segment of $\sigma$,
so $(\sigma,g_n,\mathcal R)$ is not extensible to $\eta$. Hence
$g_{n+1}(\eta)=h'(\eta)$, and the inequality again follows from
$(\sigma,h',\mathcal R)\in Q_T(\mathcal X)$.
Thus, $(\sigma,g_{n+1},\mathcal R)\in Q_T(\mathcal X)$.

Requirement (5) was arranged by definition.  For requirement (6), let
$\tau\in L_n$ be such that $(\sigma,g_n,\mathcal R)$ is extensible to $\tau$,
and let $t\in T$ satisfy
\[
  \tau^\frown\langle t\rangle\in T^{\uparrow<\omega},
  \qquad
  t\notin\bigcup\mathcal R,
  \qquad
  \htt(t)\geq g_{n+1}(\tau).
\]
Then $t\notin E_\tau$, and therefore $K_t=\{\beta\in J_\tau:t\notin U_\beta\}$ is uncountable.
For every $\beta\in K_t\subseteq I_\tau$, we have $q_\beta$ extensible to
$\tau$,
\[
  h_\beta(\tau)\leq j_0\leq g_{n+1}(\tau)\leq\htt(t) \qquad\text{and}\qquad
    t\notin\bigcup(\mathcal R\cup\mathcal S_\beta).
\]
Hence, $q_\beta$ is extensible to $\tau^\frown\langle t\rangle$.
Therefore, $K_t\subseteq\{\beta\in I:q_\beta\text{ is extensible to }\tau^\frown\langle t\rangle\}$, and the latter set is uncountable, concluding the verification of (6).
This completes the recursive construction.

Finally, define
\[
  h(\tau)=g_{n+1}(\tau)
  \qquad\text{whenever }\tau\in L_n.
\]

By construction, for every $\tau\in T^{\uparrow<\omega}$, $h(\tau)=g_{n+1}(\tau)\geq g_0(\tau)=h'(\tau)$, where $n=|\tau|$.

We claim that $q=(\sigma,h,\mathcal R)\in Q_T(\mathcal X)$.
Let $i<|\sigma|$.
Since $\sigma\restriction i$ is a proper initial segment of $\sigma$,
$(\sigma,g_i,\mathcal R)$ is not extensible to $\sigma\restriction i$.
Hence, by requirement (5),
\[
  h(\sigma\restriction i)=g_{i+1}(\sigma\restriction i)
  =h'(\sigma\restriction i)
  \leq \htt(\sigma(i)).
\]
Therefore, $q\in Q_T(\mathcal X)$.

It remains to verify the desired property.  Let $\tau\in T^{\uparrow<\omega}$
be such that $q$ is extensible to $\tau$.  If $\tau=\sigma$, then every
$q_\beta$ is extensible to $\tau$.  Otherwise, write
$\tau=\eta^\frown\langle t\rangle$ and let $|\eta|=n$.  Since $q$ is
extensible to $\tau$, we have
\[
  t\notin\bigcup\mathcal R
  \qquad\text{and}\qquad
  \htt(t)\geq h(\eta)=g_{n+1}(\eta).
\]
Also, $(\sigma,g_n,\mathcal R)$ is extensible to $\eta$, because $g_n$ agrees
with $h$ on all proper initial segments of $\eta$.  By requirement (6), the
set
\[
  \{\beta\in I:q_\beta\text{ is extensible to }
      \eta^\frown\langle t\rangle\}
\]
is uncountable.  Since $\tau=\eta^\frown\langle t\rangle$, this concludes the
proof.
\end{proof}

\subsection{Ranks and ground-model approximations}
\label{subsec:ranks}

We now introduce the rank machinery used to analyze names for subsets of $\omega$.
This is the point at which the proof most closely follows Brendle's rank argument,
with natural numbers replaced by nodes of high enough height in $T$.

\begin{definition}\label{def:QT-ranks}
Let $T$ be an $\omega$-tree, let $\mathcal X$ be an almost disjoint family of
subtrees of $T$, let $\mathcal R\in[\mathcal X]^{<\omega}$, and let $\dot D$
be a $Q_T(\mathcal X)$-name for a subset of $\omega$.
We define:
\begin{enumerate}[label=(\arabic*)]
    \item for each $\eta\in T^{\uparrow<\omega}$,
    $\operatorname{Succ}_{\mathcal R}(\eta) =
    \{\eta^\frown\langle t\rangle:
    t\in T\setminus\bigcup\mathcal R
    \text{ and }
    \eta^\frown\langle t\rangle\in T^{\uparrow<\omega}\}$.
    \item for each $\ell<\omega$, $B_\ell=
    \{\eta\in T^{\uparrow<\omega}:\exists r\in Q_T(\mathcal X)\,
    (\sigma_r=\eta\text{ and }r\Vdash\ell\in\dot D)\}$.
\end{enumerate}
For each $\ell<\omega$, define, by recursion on $\alpha<\omega_1$, sets
$W^\ell_\alpha\subseteq T^{\uparrow<\omega}$ as follows:
\begin{itemize}
    \item $W^\ell_0=B_\ell$;
    \item for $0<\alpha<\omega_1$,
    $W^\ell_\alpha
    =
    \bigcup_{\xi<\alpha}W^\ell_\xi\cup
    \{\eta\in T^{\uparrow<\omega}:
      |\operatorname{Succ}_{\mathcal R}(\eta)
        \cap\bigcup_{\xi<\alpha}W^\ell_\xi|=\omega\}$.
\end{itemize}
Let $W^\ell=\bigcup_{\alpha<\omega_1}W^\ell_\alpha$.
For $\eta\in W^\ell$, the rank of $\eta$ with respect to $(\ell,\mathcal R,\dot D)$ is
\[
  \rho_\ell(\eta)
  =
  \min\{\alpha<\omega_1:\eta\in W^\ell_\alpha\}.
\]
If $\eta\in T^{\uparrow<\omega}\setminus W^\ell$, write $\rho_\ell(\eta)=\infty$.
Finally, for $\eta\in T^{\uparrow<\omega}$ and $i<\omega$, put
\[
D_{\eta,i}
=
\{\ell<\omega:
  \exists t\in T\,
  (
    \eta^\frown\langle t\rangle\in
    \operatorname{Succ}_{\mathcal R}(\eta)\cap W^\ell
    \text{ and }
    \htt(t)\geq i
  )\}.
\]
\end{definition}

The sets $W^\ell_\alpha$ measure how close a stem is to being able to force
$\ell\in\dot D$.
At level zero, we put all stems from which $\ell\in\dot D$
can already be forced.
At later stages, a stem is admitted once it has infinitely many one-step
extensions that avoid the fixed finite set $\mathcal R$ and were admitted earlier.
Thus $\rho_\ell(\eta)$ is the
first stage at which $\eta$ enters this inductive closure, while
$\rho_\ell(\eta)=\infty$ means that this process never reaches $\eta$.

The sets $D_{\eta,i}$ are ground-model approximations to the name
$\dot D$ above the stem $\eta$.
An integer $\ell$ belongs to $D_{\eta,i}$ if some allowed one-step extension
of $\eta$ by a node of height at least $i$ has countable $\ell$-rank.
These approximations are designed to reflect the possible decisions about
$\dot D$ made by extensions of the forcing condition in the ground model, and will be used in the proof of Lemma~\ref{lem:tree-preserves-intertwined}.

\begin{lemma}\label{lem:QT-rank-facts}
Let $T$ be an $\omega$-tree, let $\mathcal X$ be an almost disjoint family of
subtrees of $T$, and assume that $\mathcal I_T(\mathcal X)$ is proper.
Let $\mathcal R\in[\mathcal X]^{<\omega}$ and let $\dot D$ be a
$Q_T(\mathcal X)$-name for a subset of $\omega$.
Let $\rho_\ell=\rho_{\ell,\mathcal R,\dot D}$, $D_{\eta,i}$ and $W^\ell$ be as in Definition~\ref{def:QT-ranks}.
Then, for every $\eta\in T^{\uparrow<\omega}$ and every $\ell<\omega$,
\begin{enumerate}[label=(\arabic*)]
    \item if $\eta\in W^\ell$, then $\ell\in D_{\eta,i}$ for every
    $i<\omega$;

    \item if $(\mathcal S_\beta:\beta\in I)$ is a pairwise disjoint family of
    finite subfamilies of $\mathcal X$, then, for every
    $\eta\in T^{\uparrow<\omega}$ with $0<\rho_\ell(\eta)<\omega_1$, for all
    but at most one $\beta\in I$ the set
    \[
      \{t\in T\setminus\bigcup(\mathcal R\cup\mathcal S_\beta):
        \eta^\frown\langle t\rangle\in T^{\uparrow<\omega}
        \text{ and }
        \rho_\ell(\eta^\frown\langle t\rangle)<\rho_\ell(\eta)
      \}
    \]
    is infinite.
\end{enumerate}
\end{lemma}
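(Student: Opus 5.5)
The proof handles the two items separately. Both use only Definition~\ref{def:QT-ranks}, the finiteness of the levels of $T$, properness of $\mathcal I_T(\mathcal X)$, and almost disjointness of $\mathcal X$.

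\emph{Item (1).} Fix $\eta\in W^\ell$ with $\alpha=\rho_\ell(\eta)$ and fix $i<\omega$. We must find $t\notin\bigcup\mathcal R$ with $\htt(t)\geq i$ such that $\eta^\frown\langle t\rangle\in T^{\uparrow<\omega}\cap W^\ell$. We split into two cases.
\begin{itemize}
\item If $\alpha>0$, then $\eta\notin\bigcup_{\xi<\alpha}W^\ell_\xi$. Hence $\eta$ enters $W^\ell_\alpha$ through the second clause, so $\operatorname{Succ}_{\mathcal R}(\eta)\cap\bigcup_{\xi<\alpha}W^\ell_\xi$ is infinite. Since $T_{<i}$ is finite, only finitely many of these successors $\eta^\frown\langle t\rangle$ have $\htt(t)<i$. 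Any other one witnesses $\ell\in D_{\eta,i}$.
\item If $\alpha=0$, then $\eta\in B_\ell$. Choose $r$ with $\sigma_r=\eta$ and $r\Vdash\ell\in\dot D$. Since $\mathcal I_T(\mathcal X)$ is proper, the set $T\setminus\bigcup(\mathcal R\cup\mathcal X_r)$ is infinite. As the levels of $T$ are finite, this set contains some $t$ with
\[
  \htt(t)\geq\max\bigl(\{i,h_r(\eta)\}\cup\{\htt(\eta(j)):j<|\eta|\}\bigr)+1 .
\]
Then $r'=(\eta^\frown\langle t\rangle,h_r,\mathcal X_r)$ is a condition. Clauses (1)--(4) of Definition~\ref{def:QT} show $r'\leq r$, so $r'\Vdash\ell\in\dot D$. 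Therefore $\eta^\frown\langle t\rangle\in B_\ell=W^\ell_0\subseteq W^\ell$, and it lies in $\operatorname{Succ}_{\mathcal R}(\eta)$.
\end{itemize}
The only point needing care is checking that $r'$ really is a condition below $r$; this is routine.

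\emph{Item (2).} Let $0<\alpha=\rho_\ell(\eta)<\omega_1$. As in the first case of item (1), the set
\[
  F=\{t\in T\setminus\textstyle\bigcup\mathcal R:\eta^\frown\langle t\rangle\in T^{\uparrow<\omega},\ \rho_\ell(\eta^\frown\langle t\rangle)<\alpha\}
\]
is infinite. The set displayed in the statement is exactly $F\setminus\bigcup\mathcal S_\beta$.

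Suppose, for a contradiction, that there are distinct $\beta,\gamma\in I$ with both $F\setminus\bigcup\mathcal S_\beta$ and $F\setminus\bigcup\mathcal S_\gamma$ finite. Then
\[
  F\subseteq^*\bigl(\textstyle\bigcup\mathcal S_\beta\bigr)\cap\bigl(\textstyle\bigcup\mathcal S_\gamma\bigr)=\bigcup_{A\in\mathcal S_\beta,\,B\in\mathcal S_\gamma}(A\cap B).
\]
Because $\mathcal S_\beta\cap\mathcal S_\gamma=\emptyset$, each pair $A,B$ consists of distinct members of the almost disjoint family $\mathcal X$, so each $A\cap B$ is finite. The right-hand side is therefore a finite union of finite sets, which forces $F$ to be finite. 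This contradicts the infiniteness of $F$, so at most one $\beta$ can fail.

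I expect no step to be a genuine obstacle. The base case of item (1) is the most delicate part, since it requires verifying that $r'$ is a condition extending $r$.
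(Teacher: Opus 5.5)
Your proposal is correct and follows the paper's proof essentially step for step. In item (1) you split on $\rho_\ell(\eta)>0$ versus $\rho_\ell(\eta)=0$. In the first case you use the infinitely many lower-rank successors outside $\bigcup\mathcal R$; in the base case you extend the witness $r$ by a sufficiently high node outside $\bigcup(\mathcal R\cup\mathcal X_r)$, which exists by properness. In item (2), two distinct bad indices $\beta,\gamma$ would make the infinite set $F$ almost contained in the finite set $\bigcup\mathcal S_\beta\cap\bigcup\mathcal S_\gamma$, exactly as in the paper.
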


\begin{proof}
For (1), fix $i<\omega$.

First assume that $0<\rho_\ell(\eta)<\omega_1$, and put
$\alpha=\rho_\ell(\eta)$.  Since $\alpha>0$ is the first stage at which
$\eta$ belongs to $W^\ell_\alpha$, the definition of $W^\ell_\alpha$ says that
\[
  L=
  \{t\in T\setminus\bigcup\mathcal R:
      \eta^\frown\langle t\rangle\in T^{\uparrow<\omega}
      \text{ and }
      \eta^\frown\langle t\rangle\in\bigcup_{\xi<\alpha}W^\ell_\xi\}
\]
is infinite.  Since the levels of $T$ are finite, $L$ contains nodes of
arbitrarily large height.  Choose $t\in L$ with
$\htt(t)\geq i$.  Then
$\eta^\frown\langle t\rangle\in W^\ell$, $t\notin\bigcup\mathcal R$, and
$\htt(t)\geq i$, so $\ell\in D_{\eta,i}$.

Now assume that $\rho_\ell(\eta)=0$.  Then $\eta\in B_\ell$, so there is a
condition $r\in Q_T(\mathcal X)$ such that $\sigma_r=\eta$ and
$r\Vdash \ell\in\dot D$.
Let
\[
  \bar m>\max\bigl(\{i,h_r(\eta)\}\cup
      \{\htt(\eta(j)):j<|\eta|\}\bigr).
\]
Since $\mathcal R\cup\mathcal X_r$ is a finite subset of $\mathcal X$ and
$\mathcal I_T(\mathcal X)$ is proper,
$T\setminus\bigcup(\mathcal R\cup\mathcal X_r)$ is infinite, and, because the
levels of $T$ are finite, it has nodes of arbitrarily large height.
Choose $t\in T\setminus\bigcup(\mathcal R\cup\mathcal X_r)$ with
$\htt(t)\geq \bar m$.  Then
$\eta^\frown\langle t\rangle\in T^{\uparrow<\omega}$ and
\[
  r^+=(\eta^\frown\langle t\rangle,h_r,\mathcal X_r)
\]
is a condition extending $r$.  Therefore $r^+\Vdash \ell\in\dot D$.
Hence $\eta^\frown\langle t\rangle\in B_\ell\subseteq W^\ell$.  Since also
$t\notin\bigcup\mathcal R$ and $\htt(t)\geq i$, we conclude again
that $\ell\in D_{\eta,i}$.

For (2), fix $\eta$ with $0<\rho_\ell(\eta)<\omega_1$, and put
$\alpha=\rho_\ell(\eta)$.  Let
\[
  L=
  \{t\in T\setminus\bigcup\mathcal R:
      \eta^\frown\langle t\rangle\in T^{\uparrow<\omega}
      \text{ and }
      \rho_\ell(\eta^\frown\langle t\rangle)<\alpha\}.
\]
As in the proof of (1), $|L|=\omega$.
For $\beta\in I$, let
\[
  U_\beta=\bigcup\mathcal S_\beta.
\]

Call $\beta$ bad if $L\setminus U_\beta$ is finite.  We show that at most one
$\beta$ is bad.  Suppose, towards a contradiction, that $\beta\neq\gamma$ are
both bad.  Then $L\setminus U_\beta$ and $L\setminus U_\gamma$ are finite, so
$L\cap U_\beta\cap U_\gamma$ is cofinite in $L$, and therefore infinite.

On the other hand, $U_\beta\cap U_\gamma$ is finite, as $\mathcal S_\beta$ and
$\mathcal S_\gamma$ are disjoint finite subfamilies of the almost disjoint
family $\mathcal X$.  This contradicts the infinitude of
$L\cap U_\beta\cap U_\gamma$.

Thus at most one $\beta\in I$ is bad.  Equivalently, for all but at most one
$\beta\in I$,
\[
  \{t\in T\setminus\bigcup(\mathcal R\cup\mathcal S_\beta):
        \eta^\frown\langle t\rangle\in T^{\uparrow<\omega}
        \text{ and }
        \rho_\ell(\eta^\frown\langle t\rangle)<\rho_\ell(\eta)
  \}
\]
is infinite.  This proves (2).
\end{proof}

\subsection{Preservation of intertwined families}
\label{subsec:preservation-intertwined}

\begin{lemma}\label{lem:QT-preservation-reduction}
Let $T$ be an $\omega$-tree, let $\mathcal X$ be an almost disjoint family of
subtrees of $T$, and assume that $\mathcal I_T(\mathcal X)$ is proper.  Let
$\langle\mathcal B,\mathcal C\rangle=\langle\{B_\alpha:\alpha<\omega_1\},
\{C_\alpha:\alpha<\omega_1\}\rangle$
be an intertwined family.  Let $\dot D$ be a $Q_T(\mathcal X)$-name for a
subset of $\omega$, and let $p\in Q_T(\mathcal X)$ be such that
\[
  p\Vdash \dot D\cap B_\beta\text{ is finite for uncountably many }\beta<\omega_1.
\]
Then there are $k<\omega$, an uncountable $I\subseteq\omega_1$, a condition
$q=(\sigma,h,\mathcal R)\leq p$, and conditions
\[
  q_\beta=(\sigma,h_\beta,\mathcal R\cup\mathcal S_\beta)
  \qquad(\beta\in I)
\]
such that:
\begin{enumerate}[label=(\alph*)]
    \item $q_\beta\leq p$ and
    $q_\beta\Vdash \dot D\cap B_\beta\subseteq k$ for every $\beta\in I$;
    \item the finite sets $\mathcal S_\beta$ are pairwise disjoint and
    disjoint from $\mathcal R$;
    \item for every $\tau\in T^{\uparrow<\omega}$ to which $q$ is extensible,
    the set
    \[
      \{\beta\in I:q_\beta\text{ is extensible to }\tau\}
    \]
    is uncountable;
    \item for every $\beta\in I$, every $\ell<\omega$, and every
    $\eta\in T^{\uparrow<\omega}$ with $0<\rho_\ell(\eta)<\omega_1$, the set
    \[
      \{t\in T\setminus\bigcup(\mathcal R\cup\mathcal S_\beta):
        \eta^\frown\langle t\rangle\in T^{\uparrow<\omega}
        \text{ and }
        \rho_\ell(\eta^\frown\langle t\rangle)<\rho_\ell(\eta)
      \}
    \]
    is infinite, where the ranks are computed from $\mathcal R$ and $\dot D$.
\end{enumerate}
\end{lemma}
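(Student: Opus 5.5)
The plan is a $\Delta$-system and pigeonhole argument that produces the uncountable family $q_\beta$. After that, Lemma~\ref{lem:QT-preparation} supplies $q$ and gives (c), and Lemma~\ref{lem:QT-rank-facts}(2) gives (d) after discarding countably many indices.

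\textbf{Choosing the $q_\beta$.} Since $p$ forces that uncountably many $\beta$ have $\dot D\cap B_\beta$ finite, I will find an uncountable $I_0\subseteq\omega_1$ with the following property: for each $\beta\in I_0$ there are a condition $p_\beta\leq p$ and some $k_\beta<\omega$ with $p_\beta\Vdash \dot D\cap B_\beta\subseteq k_\beta$.

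To see this, suppose the set $I^*$ of such $\beta$ were countable. Pick $\beta_0$ above $\sup I^*$. Then $p$ forces that some $\beta>\beta_0$ has $\dot D\cap B_\beta$ finite. Hence some $r\leq p$, some specific $\beta>\beta_0$ and some $k$ satisfy $r\Vdash\dot D\cap B_\beta\subseteq k$. This $\beta$ lies in $I^*$, which is a contradiction.

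Next I apply pigeonhole on the countably many possible stems and values of $k_\beta$. This gives an uncountable $I_1$, a stem $\sigma$ and a number $k$ such that $\sigma_{p_\beta}=\sigma$ and $k_\beta=k$ for all $\beta\in I_1$. By the $\Delta$-system lemma applied to the finite sets $\mathcal X_{p_\beta}$, I pass to an uncountable $I_2$ on which these sets form a $\Delta$-system with root $\mathcal R$. I set $\mathcal S_\beta=\mathcal X_{p_\beta}\setminus\mathcal R$ and $h_\beta=h_{p_\beta}$, and put $q_\beta=p_\beta$. Then (a) and (b) hold on $I_2$.

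\textbf{Choosing $q$.} Take $h'=h_p$. This is a legitimate choice for Lemma~\ref{lem:QT-preparation}. First, $h_\beta\geq h_p$ holds because $q_\beta\leq p$. Second, $(\sigma,h_p,\mathcal R)$ is a condition, because the stem requirement for indices below $|\sigma_p|$ is inherited from $p$. For $|\sigma_p|\leq i<|\sigma|$, the requirement $h_p(\sigma\restriction i)\leq\htt(\sigma(i))$ follows from $q_\beta$ being a condition with $h_\beta\geq h_p$.

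Lemma~\ref{lem:QT-preparation} then yields $h\geq h_p$ and a condition $q=(\sigma,h,\mathcal R)$ satisfying (c) with respect to $I_2$.

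To check $q\leq p$:
\begin{itemize}
\item $\sigma_p\subseteq\sigma$ and $h\geq h_p$.
\item $\mathcal X_p\subseteq\mathcal R$, since $\mathcal X_p\subseteq\mathcal X_{p_\beta}$ for every $\beta$ and so lies in the root.
\item The new stem nodes $\sigma(i)$ with $i\geq|\sigma_p|$ avoid $\bigcup\mathcal X_p$, because $q_\beta\leq p$.
\end{itemize}

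\textbf{Obtaining (d).} Here I fix the ranks computed from $\mathcal R$ and $\dot D$. For each of the countably many pairs $(\ell,\eta)$ with $0<\rho_\ell(\eta)<\omega_1$, Lemma~\ref{lem:QT-rank-facts}(2) applied to the disjoint family $(\mathcal S_\beta:\beta\in I_2)$ excludes at most one $\beta$. I remove these countably many bad indices and let $I\subseteq I_2$ be what remains; it is uncountable.

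The main subtlety is that (c) must hold for the final $I$ and not just for $I_2$. There are two ways to handle this. The first is that removing countably many indices from an uncountable set leaves it uncountable, so (c) survives the passage from $I_2$ to $I$. The second is to remove the bad indices before applying Lemma~\ref{lem:QT-preparation}, which is possible because the ranks depend only on $\mathcal R$ and $\dot D$, not on $h$. Either way (a)--(d) hold. I expect the first step, extracting the $p_\beta$ below $p$ for uncountably many $\beta$, to be the only place needing real care.
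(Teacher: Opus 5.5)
Your proposal is correct and follows essentially the same route as the paper. You choose $q_\beta\le p$ with a uniform $k$ and stem, apply the $\Delta$-system lemma to get $\mathcal R$ and the $\mathcal S_\beta$, invoke Lemma~\ref{lem:QT-preparation} with $h'=h_p$ to obtain $q$ and (c), and then discard countably many indices via Lemma~\ref{lem:QT-rank-facts}(2) to get (d) while keeping (c). Your extra checks are correct, and the paper leaves them implicit: that uncountably many $\beta$ admit such $p_\beta$, that $(\sigma,h_p,\mathcal R)$ is a condition, and that $q\le p$.
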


\begin{proof}
For uncountably many $\beta<\omega_1$, choose $q_\beta\leq p$ and
$k_\beta<\omega$ such that $q_\beta\Vdash \dot D\cap B_\beta\subseteq k_\beta$.

Thinning the index set to an uncountable set $I_1$, we may assume that $k_\beta=k$ for all $\beta$ under
consideration and that all $q_\beta$ have the same stem $\sigma$.  By the
$\Delta$-system lemma, thin further to an uncountable set $I_2$ so that
\[
  q_\beta=(\sigma,h_\beta,\mathcal R\cup\mathcal S_\beta),
\]
where the finite sets $\mathcal S_\beta$ are pairwise disjoint and disjoint
from $\mathcal R$.

Write $p=(\bar\sigma,h_p,\mathcal X_p)$.  Since $q_\beta\leq p$ for every
$\beta$, we have $\mathcal X_p\subseteq\mathcal R$, and
$(\sigma,h_p,\mathcal R)\leq p$.  Applying
Lemma~\ref{lem:QT-preparation}, choose $h\geq h_p$ such that
\[
  q=(\sigma,h,\mathcal R)
\]
extends $p$ and, for every $\tau$ to which $q$ is extensible,
\[
  I_3=\{\beta:q_\beta\text{ is extensible to }\tau\}
\]
is uncountable.

For each fixed pair $(\eta,\ell)$ with $0<\rho_\ell(\eta)<\omega_1$,
Lemma~\ref{lem:QT-rank-facts} says that at most one $\beta$ fails clause (d).
Since $T^{\uparrow<\omega}\times\omega$ is countable, we may discard countably
many indices and obtain an uncountable set $I\subseteq\omega_1$ for which
clause (d) holds for every $\beta\in I$.  Clause (c) is preserved after this
countable thinning.
\end{proof}

\begin{lemma}\label{lem:QT-case-one-impossible}
Let $T$, $\mathcal X$, $\langle\mathcal B,\mathcal C\rangle$, $\dot D$,
$k$, $I$, $q=(\sigma,h,\mathcal R)$, and
$q_\beta=(\sigma,h_\beta,\mathcal R\cup\mathcal S_\beta)$ be as in
Lemma~\ref{lem:QT-preservation-reduction}.  Define the ranks and the sets
$D_{\eta,i}$ using $\mathcal R$ and $\dot D$.  Then there is no
$\tau\in T^{\uparrow<\omega}$ to which $q$ is extensible such that
\[
  \{\alpha<\omega_1:
    \forall i<\omega\ |D_{\tau,i}\cap C_\alpha|=\omega\}
\]
is uncountable.
\end{lemma}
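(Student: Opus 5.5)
The plan is to argue by contradiction, following Brendle's rank argument. Suppose $\tau\in T^{\uparrow<\omega}$ is such that $q$ is extensible to $\tau$ and the set
\[
  \Gamma=\{\alpha<\omega_1:\forall i<\omega\ |D_{\tau,i}\cap C_\alpha|=\omega\}
\]
is uncountable. The goal is to produce $\beta\in I$ and an extension of $q_\beta$ forcing some $\ell\geq k$ with $\ell\in B_\beta$ into $\dot D$. This contradicts clause (a) of Lemma~\ref{lem:QT-preservation-reduction}.

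\textbf{Step 1: choosing $\beta$ by intertwinedness.} For each fixed $i$, the set $D_{\tau,i}\subseteq\omega$ meets uncountably many members of $\mathcal C$ (all $C_\alpha$, $\alpha\in\Gamma$) in an infinite set. By Definition~\ref{def:intertwined}, $D_{\tau,i}$ then meets all but countably many $B_\beta$ in an infinite set. Since there are only countably many $i$, there is a countable $N\subseteq\omega_1$ such that $|D_{\tau,i}\cap B_\beta|=\omega$ for every $\beta\notin N$ and every $i<\omega$. By clause (c), the set of $\beta\in I$ with $q_\beta$ extensible to $\tau$ is uncountable. So fix such a $\beta\in I\setminus N$.

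\textbf{Step 2: choosing $\ell$ and the first node.} Choose $i$ larger than $h_\beta(\tau)$ and than the heights of the nodes of $\tau$, and pick $\ell\in D_{\tau,i}\cap B_\beta$ with $\ell\geq k$. This gives a node $t$ with $\tau^\frown\langle t\rangle\in\operatorname{Succ}_{\mathcal R}(\tau)\cap W^\ell$ and $\htt(t)\geq i$. I would then like $q_\beta$ to be extensible to $\tau^\frown\langle t\rangle$. The height requirement holds by the choice of $i$, and $t\notin\bigcup\mathcal R$ holds by definition of $\operatorname{Succ}_{\mathcal R}$. What remains is $t\notin\bigcup\mathcal S_\beta$.

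This is the step I expect to be the main obstacle, since $D_{\tau,i}$ only records avoidance of $\mathcal R$. My first attempt is to exploit the freedom in $\ell$ and $i$. For all $i$, the set $D_{\tau,i}\cap B_\beta$ is infinite, and the candidate witnesses have heights going to infinity. If every such witness lay in $U_\beta=\bigcup\mathcal S_\beta$, I would try to show that $\tau$ itself has countable positive $\ell$-rank for some suitable $\ell\geq k$ in $B_\beta$. This would follow from infinitely many successors in $W^\ell$, since an $\ell$ lying in all $D_{\tau,i}$ has infinitely many witnesses. Then clause (d) would supply a successor avoiding $\bigcup(\mathcal R\cup\mathcal S_\beta)$ of smaller rank, of arbitrarily large height. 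To get an $\ell$ in $\bigcap_i D_{\tau,i}\cap B_\beta$, I would apply intertwinedness to a diagonal set $E=\{\ell: \ell\in D_{\tau,g(\ell)}\}$ for a suitable growing $g$, chosen so that $E$ still meets uncountably many $C_\alpha$. If that fails, the fallback is to re-run Step 1 with sets $D^{\beta}_{\tau,i}$ whose witnesses avoid $U_\beta$. The aim would then be to show, using the pairwise almost disjointness of the $U_\beta$ as in Lemma~\ref{lem:QT-preparation}, that only countably many $\beta$ can spoil the infinite intersections.

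\textbf{Step 3: descending the ranks.} Starting from a node $\eta_0\supseteq\tau$ with $\eta_0\in W^\ell$ to which $q_\beta$ is extensible, repeatedly apply clause (d) of Lemma~\ref{lem:QT-preservation-reduction} while the rank is positive. It gives infinitely many $t\notin\bigcup(\mathcal R\cup\mathcal S_\beta)$ with strictly smaller rank. Choose one of height at least $h_\beta$ of the current stem, which is possible because the levels of $T$ are finite. Since ordinals are well founded, after finitely many steps we reach $\eta$ with $\rho_\ell(\eta)=0$, that is, $\eta\in B_\ell$, and $q_\beta$ is still extensible to $\eta$. Let $q_\beta'=(\eta,h_\beta,\mathcal R\cup\mathcal S_\beta)\leq q_\beta$, and let $r$ be a condition with stem $\eta$ and $r\Vdash\ell\in\dot D$. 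By Lemma~\ref{lem:QT-sigma-centered}(1), $r$ and $q'_\beta$ have a common extension. That extension forces $\ell\in\dot D\cap B_\beta$ with $\ell\geq k$, contradicting $q_\beta\Vdash\dot D\cap B_\beta\subseteq k$.
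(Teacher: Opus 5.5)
Your Steps 1 and 3 are sound; Step 3 is exactly the paper's rank descent. There is a genuine gap in Step 2, though. The obstacle you flag there, getting a witness $t$ that also avoids $U_\beta=\bigcup\mathcal S_\beta$, is the whole content of the lemma, and neither remedy closes it.

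Remedy (a) needs some $\ell\geq k$ in $B_\beta\cap\bigcap_i D_{\tau,i}$. But $\bigcap_i D_{\tau,i}=\{\ell:\tau\in W^\ell\}$, and the hypothesis gives no control over it. If each $\ell$ has a single witness $t_\ell$ with $\htt(t_\ell)\to\infty$, every $D_{\tau,i}$ is cofinite while the intersection is empty. Your diagonal set $E$ does not help, since $\ell\in E$ only gives $\ell\in D_{\tau,g(\ell)}$.

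Remedy (b) appeals only to the pairwise almost disjointness of the $U_\beta$, and no argument of that kind can work. Lemmas~\ref{lem:QT-adds-separator}, \ref{lem:QT-preparation}, \ref{lem:QT-rank-facts}, \ref{lem:QT-preservation-reduction} and~\ref{lem:QT-case-two-forces-finite} never use that members of $\mathcal X$ are downward closed, and the definition of $Q_T(\mathcal X)$ makes sense verbatim for arbitrary almost disjoint families on $T$. If this lemma did not use downward closure either, $Q_T(\mathcal X)$ would preserve intertwinedness for, e.g., a copy of $\mathcal B$ on $T$. Yet that forcing adds a set almost disjoint from every $B_\beta$ and meeting every $C_\alpha$ infinitely, which is absurd. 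In that example $B_\beta\in\mathcal S_\beta$ for all but finitely many $\beta\in I$, and essentially the only witness for $\ell$ is the node coding $\ell$, which lies in $U_\beta$ when $\ell\in B_\beta$. So every $\beta$ ``spoils''. Your proposal never invokes downward closure, so it cannot be completed as it stands.

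The missing idea is to combine downward closure with the finite branching of $T$.

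\begin{enumerate}
\item For $u\in T$, let $D^u_i$ be defined like $D_{\tau,i}$ but with witnesses required to satisfy $u\preceq t$. Call $E\subseteq\omega$ $\mathcal C$-large if it meets uncountably many $C_\alpha$ infinitely.
\item The root has every $D^u_i$ $\mathcal C$-large. Any such $u$ has an immediate successor with the same property. Otherwise pick, for each immediate successor $v$, an $i_v$ with $D^v_{i_v}$ not $\mathcal C$-large. For $i$ above $\htt(u)$ and all $i_v$, $D^u_i$ lies in the finite union of the $D^v_{i_v}$, which is not $\mathcal C$-large. This yields a branch $b=\{b_n:n<\omega\}$ with every $D^{b_n}_0$ $\mathcal C$-large.
\item At most one $\beta$ has a member of $\mathcal S_\beta$ containing $b$. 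For every other $\beta$, downward closure gives $n_\beta$ such that the cone above $b_{n_\beta}$ misses $U_\beta$. Indeed, a member meeting the cones above infinitely many $b_n$ would contain $b$.
\item Apply intertwinedness to the countably many fixed sets $D^{b_n}_0$. Choose $\beta\in I$ with $q_\beta$ extensible to $\tau$, outside the countably many exceptions, and take $n\geq\max\{n_\beta,h_\beta(\tau)\}$.
\item Any witness $t$ for some $\ell\in D^{b_n}_0\cap B_\beta$ with $\ell\geq k$ avoids $\bigcup(\mathcal R\cup\mathcal S_\beta)$ and has $\htt(t)\geq n\geq h_\beta(\tau)$. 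So $q_\beta$ is extensible to $\tau^\frown\langle t\rangle$, and your Step 3 finishes the proof.
\end{enumerate}

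The paper does the same, except that it first thins to an uncountable set on which $n_\beta$ and the bound on $h_\beta(\tau)$ are constant.
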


\begin{proof}
Suppose that such a $\tau$ exists, and let $\Theta=\{\alpha<\omega_1:
\forall i<\omega\ |D_{\tau,i}\cap C_\alpha|=\omega\}$.
Call a set $E\subseteq\omega$ $\mathcal C$-large if
\[
  \{\alpha<\omega_1: |E\cap C_\alpha|=\omega\}
\]
is uncountable.
For $u\in T$ and $i<\omega$, define
\[
  D^u_i=
  \{\ell<\omega:
    \exists t\in T\setminus\bigcup\mathcal R\ 
    (
      \tau^\frown\langle t\rangle\in W^\ell,\ u\preceq t,
      \text{ and }\htt(t)\geq i
    )\}.
\]

Let $S$ be the set of all nodes $u\in T$ such that $D^u_i$ is
$\mathcal C$-large for every $i<\omega$.  The root of $T$ belongs to $S$.
We claim that every $u\in S$ has an immediate successor in $S$.  Otherwise,
for each immediate successor $v$ of $u$ choose $i_v<\omega$ such that
$D^v_{i_v}$ is not $\mathcal C$-large.  Taking $i$ larger than $\htt(u)$ and
all the $i_v$'s, every witness for membership in $D^u_i$ lies above one of
these immediate successors.  Thus $D^u_i$ is contained in a finite union of
sets that are not $\mathcal C$-large, contradicting $u\in S$.  Hence $S$
contains an infinite branch.  Let
$b=\{b_n:n<\omega\}$ be such a branch, where $b_n$ is the unique node of $b$ at level $n$.

For $n<\omega$ define
\[
  E^0_n=
  \{\ell<\omega:\exists j\geq n\ b_j \in T\setminus\bigcup\mathcal R\text{ and }
    \tau^\frown\langle b_j\rangle\in W^\ell\}
\]
and
\[
  E^1_n=
  \{\ell<\omega: \exists t\in T\setminus\bigcup\mathcal R\ 
    \tau^\frown\langle t\rangle\in W^\ell,\ b_n\preceq t
    \text{ and } t\notin b\}.
\]

For every $n<\omega$ and $i<\omega$, notice that
$D^{b_n}_i\subseteq E^0_n\cup E^1_n$. Since $b_n\in S$, at least one of the
sets $E^0_n,E^1_n$ is $\mathcal C$-large.
Since both sequences $(E^0_n:n<\omega)$ and $(E^1_n:n<\omega)$ are decreasing,
there is $\varepsilon\in\{0,1\}$ such that, for every $n<\omega$,
$E^\varepsilon_n$ is $\mathcal C$-large.

Let
\[
  I_\tau=\{\beta\in I:q_\beta\text{ is extensible to }\tau\}.
\]
This set is uncountable by Lemma~\ref{lem:QT-preservation-reduction}.
Since $\mathcal X$ is almost disjoint and $b$ is infinite, at most one member of
$\mathcal X$ contains $b$.
Since the sets $\mathcal S_\beta$ are pairwise disjoint, at most one
$\beta\in I_\tau$ has some member of $\mathcal S_\beta$ containing $b$.
Shrinking $I_\tau$ by removing this possible exception, we may assume that no
member of any $\mathcal S_\beta$, $\beta\in I_\tau$, contains $b$.
We claim that, for every $\beta\in I_\tau$, there is $n_\beta<\omega$ such that
\[
  \{t\in T:b_{n_\beta}\preceq t\}\cap\bigcup\mathcal S_\beta=\emptyset.
\]
Indeed, if this failed for some $\beta\in I_\tau$, then for every
$n<\omega$ there would be $t_n\in\bigcup\mathcal S_\beta$ with
$b_n\preceq t_n$.  Since $\mathcal S_\beta$ is finite, some
$A\in\mathcal S_\beta$ contains $t_n$ for infinitely many $n$.  As $A$ is
downward closed, $b_n\in A$ for infinitely many $n$.  Thus, for every
$m<\omega$, we can choose $n\geq m$ with $b_n\in A$; since
$b_m\preceq b_n$ and $A$ is downward closed, $b_m\in A$.  Hence
$b\subseteq A$, contradicting the choice of $I_\tau$.
Thinning $I_\tau$, choose an uncountable $I'\subseteq I_\tau$, a level
$n_0<\omega$, and $j_0<\omega$ such that, for every $\beta\in I'$,
\[
  \{t\in T:b_{n_0}\preceq t\}\cap\bigcup\mathcal S_\beta=\emptyset
  \qquad\text{and}\qquad
  h_\beta(\tau)\leq j_0.
\]

Choose $n\geq\max\{n_0,j_0\}$.
By the choice of $n$, whenever $\ell\in E^\varepsilon_n$ and $t$ witnesses
this membership, the node $t$ lies above $b_{n_0}$, avoids
$\bigcup(\mathcal R\cup\mathcal S_\beta)$, and has height at least
$h_\beta(\tau)$, for every $\beta\in I'$.  Hence $q_\beta$ is extensible to
$\tau^\frown\langle t\rangle$ for every $\beta\in I'$.

By intertwinedness, since $E^\varepsilon_n$ is $\mathcal C$-large,
$E^\varepsilon_n$ has infinite intersection with
$B_\beta$ for all but countably many $\beta<\omega_1$.  Choose
$\beta\in I'$ such that $|E^\varepsilon_n\cap B_\beta|=\omega$, and pick
$\ell\in E^\varepsilon_n\cap B_\beta$ with $\ell\geq k$.  Let $t$ witness
that $\ell\in E^\varepsilon_n$, and put
$\eta_0=\tau^\frown\langle t\rangle$.  Then $q_\beta$ is extensible to
$\eta_0$ and $\rho_\ell(\eta_0)<\omega_1$.

Starting from $\eta_0$, we construct a finite descending sequence of
$\ell$-ranks, keeping each stem extensible from $q_\beta$, as follows.  Suppose that
$\eta_j$ has been constructed, $q_\beta$ is extensible to $\eta_j$, and
$\rho_\ell(\eta_j)>0$.  By clause (d) of
Lemma~\ref{lem:QT-preservation-reduction}, there are rank-lowering nodes
outside $\bigcup(\mathcal R\cup\mathcal S_\beta)$ of arbitrarily large height.
Choose such a node $s$ with $\htt(s)\geq h_\beta(\eta_j)$.
Then $q_\beta$ is extensible to
$\eta_{j+1}=\eta_j^\frown\langle s\rangle$, and $\rho_\ell(\eta_{j+1})<\rho_\ell(\eta_j)$.

Since there is no infinite strictly decreasing sequence of ordinals, this
process stops after finitely many steps.  Thus we obtain a stem $\eta$ such
that $q_\beta$ is extensible to $\eta$ and $\rho_\ell(\eta)=0$.

By the definition of rank zero, there is a condition $r$ with stem $\eta$ such
that $r\Vdash \ell\in\dot D$.
Since $q_\beta$ is extensible to $\eta$, the condition
\[
  q_\beta^\eta=(\eta,h_\beta,\mathcal R\cup\mathcal S_\beta)
\]
is an extension of $q_\beta$.  The conditions $q_\beta^\eta$ and $r$ have the
same stem, so by Lemma~\ref{lem:QT-sigma-centered} they are compatible.  Let
$s^*$ be a common extension.  Then $s^*\leq r$, so $s^*\Vdash \check \ell\in\dot D$.
On the other hand, $s^*\leq q_\beta$ and
\[
  q_\beta\Vdash \dot D\cap \check B_\beta\subseteq \check k.
\]
Since $\ell\in B_\beta$ and $\ell\geq k$, this implies $s^*\Vdash \check \ell\notin\dot D$, a contradiction.
\end{proof}

\begin{lemma}\label{lem:QT-case-two-forces-finite}
Let $T$, $\mathcal X$, $\langle\mathcal B,\mathcal C\rangle$, $\dot D$,
$k$, $I$, $q=(\sigma,h,\mathcal R)$, and
$q_\beta=(\sigma,h_\beta,\mathcal R\cup\mathcal S_\beta)$ be as in
Lemma~\ref{lem:QT-preservation-reduction}.  Define the ranks and the sets
$D_{\eta,i}$ using $\mathcal R$ and $\dot D$.  Suppose that, for every
$\eta\in T^{\uparrow<\omega}$ to which $q$ is extensible, the set
\[
  \Theta_\eta=
  \{\alpha<\omega_1:
    \forall i<\omega\ |D_{\eta,i}\cap C_\alpha|=\omega\}
\]
is countable.  Then
\[
  q\Vdash \{\alpha<\omega_1:|\dot D\cap \check C_\alpha|=\omega\}\text{ is countable}.
\]
\end{lemma}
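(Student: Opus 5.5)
Let $\Theta=\bigcup\{\Theta_\eta:\eta\in T^{\uparrow<\omega},\ q\text{ extensible to }\eta\}$. This is a countable union of countable sets, because $T^{\uparrow<\omega}$ is countable, so $\Theta$ is countable. It then suffices to show, for each fixed $\alpha\notin\Theta$, that $q\Vdash|\dot D\cap\check C_\alpha|<\omega$. I would prove this by a density argument below $q$.

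Fix $\alpha\notin\Theta$ and $r\le q$. Since $\mathcal R\subseteq\mathcal X_r$ and $h_r\ge h$, every stem extending $\sigma_r$ that is admissible for $r$ is also one to which $q$ is extensible. So for every such $\eta$ we have $\alpha\notin\Theta_\eta$, and we can pick $i_\eta<\omega$ with $D_{\eta,i_\eta}\cap C_\alpha$ finite. The sets $D_{\eta,i}$ decrease in $i$, so raising $i_\eta$ preserves this. Define $r'=(\sigma_r,h',\mathcal X_r)$ with $h'(\eta)=\max\{h_r(\eta),i_\eta\}$. This is still a condition below $r$, because $h'$ only changes values on stems extending $\sigma_r$, which do not constrain $\sigma_r$ itself.

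The key combinatorial step is to analyse what forces $\ell\in\dot D$ below $r'$. Suppose $s\le r'$ and $s\Vdash\ell\in\dot D$. Then $\sigma_s\in B_\ell\subseteq W^\ell$, and every node of $\sigma_s$ beyond $\sigma_r$ avoids $\bigcup\mathcal R$ and respects $h'$. Walk down the initial segments $\sigma_s\restriction j$ for $|\sigma_r|\le j\le|\sigma_s|$. There are two cases.
\begin{enumerate}[label=(\roman*)]
\item $\sigma_r\in W^\ell$. Then $\ell\in D_{\sigma_r,i}$ for all $i$ by Lemma~\ref{lem:QT-rank-facts}(1).
\item Otherwise, take the last $j$ with $\eta=\sigma_s\restriction j\notin W^\ell$. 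Then $\eta^\frown\langle\sigma_s(j)\rangle\in\operatorname{Succ}_{\mathcal R}(\eta)\cap W^\ell$ and $\htt(\sigma_s(j))\ge h'(\eta)\ge i_\eta$, so $\ell\in D_{\eta,i_\eta}$.
\end{enumerate}
Hence in the generic extension every $\ell\in\dot D\cap C_\alpha$ lies in $D_{x\restriction j,i_{x\restriction j}}\cap C_\alpha$ for some initial segment $x\restriction j$ of the generic stem $x$. In that case the point $x\restriction j$ is a ``transition point'' for $\ell$. Each transition point contributes only finitely many $\ell$.

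The main obstacle is that $x$ has infinitely many initial segments, so this bound alone does not give finiteness. I would first try to choose the $i_\eta$ more cleverly by recursion on $|\eta|$, as in the proof of Lemma~\ref{lem:QT-preparation}. The aim is to make $D_{\eta,i_\eta}\cap C_\alpha$ disjoint from the finitely many values already ``used'' at shorter stems. Separately, I would show that a stem $\eta\notin W^\ell$ has only finitely many allowed successors in $W^\ell$. This follows from the definition of the $W^\ell_\alpha$: otherwise $\eta$ would itself enter $W^\ell$. It allows enlarging $h'(\eta)$ past all such successors for the finitely many relevant $\ell$.

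Combining these two devices, the goal is to get every new $\ell\in C_\alpha$ either excluded outright or forced to have its transition point at $\sigma_r$. That would give $r'\Vdash\dot D\cap C_\alpha\subseteq (D_{\sigma_r,i_{\sigma_r}}\cap C_\alpha)\cup F$ for a finite ground-model set $F$, and density finishes the proof. If this fusion fails, the fallback is to use the uncountability of the $\mathcal C$-large witnesses, as in Lemma~\ref{lem:QT-case-one-impossible}. The point would be that infinitely many transition levels for a single $\alpha$ produce a stem $\eta$ with $\alpha\in\Theta_\eta$.
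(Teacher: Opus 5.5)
Your reduction is exactly the paper's: the countable set $\Theta$, a fixed $\alpha\notin\Theta$, density below $q$, and the choice of $i_\eta$ from $\alpha\notin\Theta_\eta$ at every stem admissible for $r$. So is your second device. A stem $\eta\notin W^\ell$ has only finitely many successors in $\operatorname{Succ}_{\mathcal R}(\eta)\cap W^\ell$, since otherwise their ranks, countably many countable ordinals, would be bounded by some $\gamma<\omega_1$, putting $\eta$ into $W^\ell_\gamma$.

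The problem is that the proposal stops before the step that finishes the proof, and what it puts there does not work. Your first device, making $D_{\eta,i_\eta}\cap C_\alpha$ disjoint from values used at shorter stems, cannot be arranged in general. Suppose $\ell\in C_\alpha$ and some condition with stem $\sigma_r$ forces $\ell\in\dot D$. Since conditions with the same stem are compatible, every refinement of $r$ with stem $\sigma_r$ admits admissible one-step extensions $\eta=\sigma_r^\frown\langle t\rangle\in B_\ell$. At such $\eta$ we have $\ell\in D_{\eta,i}$ for every $i$, by Lemma~\ref{lem:QT-rank-facts}(1). The fallback via $\mathcal C$-largeness is also beside the point. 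Once $\alpha\notin\Theta$ is fixed, everything concerns this single $\alpha$, and the uncountability has already been spent in the hypothesis on the sets $\Theta_\eta$.

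The missing idea is to propagate non-membership in $W^\ell$ instead of tracking transition points; your second device alone suffices for this.
\begin{itemize}
\item For each $\eta\supseteq\sigma_r$ to which $q$ is extensible, the set $G_\eta=\{\ell\in D_{\eta,i_\eta}\cap C_\alpha:\eta\notin W^\ell\}$ is finite.
\item Let $h^*(\eta)$ exceed $h_r(\eta)$, $i_\eta$, and the heights of the finitely many $t\notin\bigcup\mathcal R$ with $\eta^\frown\langle t\rangle\in W^\ell$ for some $\ell\in G_\eta$. Let $h^*=h_r$ elsewhere, so that $r^*=(\sigma_r,h^*,\mathcal X_r)\leq r$.
\item Now take any $\ell\in C_\alpha$, any such $\eta\notin W^\ell$, and any successor $\eta^\frown\langle t\rangle$ with $t\notin\bigcup\mathcal X_r$ and $\htt(t)\geq h^*(\eta)$. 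This successor is again outside $W^\ell$. If $\ell\in G_\eta$, this holds by the choice of $h^*(\eta)$. If $\ell\notin G_\eta$, it holds because then $\ell\notin D_{\eta,i_\eta}$ and $h^*(\eta)\geq i_\eta$.
\item By induction on length, if $\sigma_r\notin W^\ell$ then no stem admissible for $r^*$ lies in $W^\ell\supseteq B_\ell$. Hence no extension of $r^*$ forces $\ell\in\dot D$.
\item Every $\ell\in C_\alpha$ with $\sigma_r\in W^\ell$ lies in $E=D_{\sigma_r,i_{\sigma_r}}\cap C_\alpha$.
\end{itemize}
Hence $r^*\Vdash\dot D\cap\check C_\alpha\subseteq\check E$, with no extra set $F$ needed.

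The paper carries the corresponding invariant, ``$\rho_\ell(\nu)=\infty$ for all $\ell\in C_\alpha\setminus E$'', through a level-by-level recursion modelled on Lemma~\ref{lem:QT-preparation}. In the per-$\ell$ form above, $h^*(\eta)$ can be chosen independently at each stem, so no recursion is needed.
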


\begin{proof}
  Let $\Theta=\bigcup\{\Theta_\eta:\eta\in T^{\uparrow<\omega}\text{ and }q\text{ is extensible to }\eta\}$.
  Since $T^{\uparrow<\omega}$ is countable and each $\Theta_\eta$ is countable,
  $\Theta$ is countable.
  We shall prove that for every $\alpha\notin\Theta$, $q\Vdash \dot D\cap \check C_\alpha\text{ is finite}$.

  It is enough to show that, for every $\alpha\notin\Theta$ and every
  $r\leq q$, there is $r^*\leq r$ and a finite $E\subseteq C_\alpha$ such that $r^*\Vdash \dot D\cap \check C_\alpha\subseteq \check E$.

  Fix $\alpha\notin\Theta$ and $r=(\eta,h_r,\mathcal X_r)\leq q$.  In particular,
  $q$ is extensible to $\eta$.

  Since $\alpha\notin\Theta_\eta$, choose $i_\eta<\omega$ such that
  \[
    E=D_{\eta,i_\eta}\cap C_\alpha
  \]
  is finite.  If $\ell\in C_\alpha\setminus E$, then
  $\rho_\ell(\eta)=\infty$.  Indeed, if $\rho_\ell(\eta)<\omega_1$, then
  $\eta\in W^\ell$, and Lemma~\ref{lem:QT-rank-facts} gives
  $\ell\in D_{\eta,i_\eta}$, contradicting $\ell\notin E$.
  Thus:
  \begin{equation}\label{eq:QT-case-two-initial-ranks}\tag{$\star$}
    \forall \ell\in C_\alpha\setminus E\, \rho_\ell(\eta)=\infty.
  \end{equation}

  For each $n<\omega$, put $L_n=\{\nu\in T^{\uparrow<\omega}:|\nu|=n\}$ and $L_{<n}=\bigcup_{m<n}L_m$.
  We define a sequence of functions $(g_n:n<\omega)$ such that, for every $n<\omega$:
  \begin{enumerate}[label=(\arabic*)]
    \item $g_n:T^{\uparrow<\omega}\to\omega$; \item $g_n|(T^{\uparrow<\omega}\setminus L_{<n})=h_r|(T^{\uparrow<\omega}\setminus L_{<n})$;
    \item for all $m<n$, $g_n|L_{<m}=g_m|L_{<m}$ and $g_m\leq g_n$ pointwise;
    \item $g_n\geq h_r$ pointwise and $(\eta, g_n,\mathcal X_r)\in Q_T(\mathcal X)$;
    \item for all $\nu \in L_n$, if $(\eta,g_n,\mathcal X_r)$ is not extensible to $\nu$, then $g_{n+1}(\nu)=h_r(\nu)$;
    \item for all $\nu\in L_n$, if $(\eta,g_n,\mathcal X_r)$ is
      extensible to $\nu$, then
      \begin{enumerate}[label=(\roman*)]
        \item $\forall \ell\in C_\alpha\setminus E\, \rho_\ell(\nu)=\infty$; and
        \item for every $t\in T$ such that $\nu^\frown\langle t\rangle\in T^{\uparrow<\omega}$, $t\notin\bigcup\mathcal X_r$, and $\htt(t)\geq g_{n+1}(\nu)$, we have $\rho_\ell(\nu^\frown\langle t\rangle)=\infty$ for every $\ell\in C_\alpha\setminus E$.
      \end{enumerate}
  \end{enumerate}

  We proceed by recursion on $n$.
Let $g_0=h_r$.  Then (1)--(4) hold, and there is nothing to verify for
(5) and (6).

Assume that $(g_m:m\leq n)$ has been constructed.  We define $g_{n+1}$ as
follows.  Put $g_{n+1}=g_n$ on $L_{<n}$ and put
$g_{n+1}=h_r$ on $T^{\uparrow<\omega}\setminus L_{<n+1}$.

Now fix $\nu\in L_n$.  If $(\eta,g_n,\mathcal X_r)$ is not extensible to
$\nu$, let $g_{n+1}(\nu)=h_r(\nu)$.

Suppose, then, that $(\eta,g_n,\mathcal X_r)$ is extensible to $\nu$.
We first verify (6)(i).  If $\nu=\eta$, this follows by Equation~\eqref{eq:QT-case-two-initial-ranks}.  Otherwise,
write $\nu=\mu^\frown\langle t\rangle$, where $\mu\in L_{n-1}$.  Since
$(\eta,g_n,\mathcal X_r)$ is extensible to $\nu$, we have
\[
  t\notin\bigcup\mathcal X_r
  \qquad\text{and}\qquad
  \htt(t)\geq g_n(\mu).
\]
Moreover, $(\eta,g_{n-1},\mathcal X_r)$ is extensible to $\mu$, because
$g_{n-1}$ and $g_n$ agree on all proper initial segments of $\mu$ relevant
to extensibility.  By the induction hypothesis applied to $\mu$, specifically by (6)(ii), we get $\rho_\ell(\nu)=\infty$ for every $\ell\in C_\alpha\setminus E$.

Since $g_n\geq h_r$ and $r\leq q$, every such $\nu$ is also an extension to
which $q$ is extensible.  Therefore $\alpha\notin\Theta_\nu$, and we may
choose $i(\nu)<\omega$ such that $D_{\nu,i(\nu)}\cap C_\alpha$ is finite.
Let
\[
  F_\nu=(D_{\nu,i(\nu)}\cap C_\alpha)\setminus E.
\]
For each $\ell\in F_\nu$, (6)(i) gives $\rho_\ell(\nu)=\infty$.
We claim that the set
\[
  L_{\nu,\ell}=
  \{t\in T\setminus\bigcup\mathcal R:
    \nu^\frown\langle t\rangle\in T^{\uparrow<\omega}
    \text{ and }
    \rho_\ell(\nu^\frown\langle t\rangle)<\omega_1\}
\]
is finite.
Indeed, if $L_{\nu,\ell}$ is infinite, then the set $\{\rho_\ell(\nu^\frown\langle t\rangle):t\in L_{\nu,\ell}\}$
is a countable set of countable ordinals.
Hence, it is bounded by some $\gamma<\omega_1$.  Thus, for every $t\in L_{\nu,\ell}$, $\nu^\frown\langle t\rangle\in\bigcup_{\xi<\gamma}W^\ell_\xi$.
Since $L_{\nu,\ell}\subseteq T\setminus\bigcup\mathcal R$ is infinite, the definition of $W^\ell_\gamma$ gives $\nu\in W^\ell_\gamma$, contradicting $\rho_\ell(\nu)=\infty$.

Since $F_\nu$ is finite, choose $j(\nu)\geq i(\nu)$ such that no node in
$\bigcup_{\ell\in F_\nu}L_{\nu,\ell}$ has height at least $j(\nu)$.  Set
\[
  g_{n+1}(\nu)=\max\{h_r(\nu),j(\nu)\}.
\]

We now verify (6)(ii).
Let $t\in T$ be such that $\nu^\frown\langle t\rangle\in T^{\uparrow<\omega}$, $t\notin\bigcup\mathcal X_r$, and $\htt(t)\geq g_{n+1}(\nu)$.

Fix $\ell\in C_\alpha\setminus E$.  Since $r\leq q$, we have
$\mathcal R\subseteq\mathcal X_r$, and hence $t\notin\bigcup\mathcal R$.
If $\ell\in F_\nu$, then $t\notin L_{\nu,\ell}$ by the choice of $j(\nu)$,
so $\rho_\ell(\nu^\frown\langle t\rangle)=\infty$.  If
$\ell\notin F_\nu$, then $\ell\notin D_{\nu,i(\nu)}$.  Since
$\htt(t)\geq j(\nu)\geq i(\nu)$ and
$t\notin\bigcup\mathcal R$, the definition of $D_{\nu,i(\nu)}$ again gives $\rho_\ell(\nu^\frown\langle t\rangle)=\infty$.

This completes the definition of $g_{n+1}$.  Conditions (1)--(3) are
immediate from the construction.  Condition (4) holds because the only values
of $g_{n+1}$ on proper initial segments of $\eta$ are the corresponding
values of $h_r$.  Condition (5) was arranged by definition, and condition
(6) was verified above.

This completes the recursive construction.
Define $h^*:T^{\uparrow<\omega}\to\omega$ by $h^*(\nu)=g_{n+1}(\nu)$ whenever $\nu\in L_n$.
Then $h^*\geq h_r$.  Moreover, if $\nu$ is not an extension of $\eta$, then
$(\eta,g_n,\mathcal X_r)$ is not extensible to $\nu$, where $n=|\nu|$, and
therefore $h^*(\nu)=g_{n+1}(\nu)=h_r(\nu)$.  In particular, $h^*=h_r$ on all
proper initial segments of $\eta$.

Let $r^*=(\eta,h^*,\mathcal X_r)$.  Since $h^*=h_r$ on all proper
initial segments of $\eta$ and $h^*\geq h_r$, the triple $r^*$ is a
condition and $r^*\leq r$.

We claim that $r^*\Vdash \dot D\cap \check C_\alpha\subseteq \check E$.
Suppose not.  Then there are $s\leq r^*$ and $\ell\in C_\alpha\setminus E$ such that $s\Vdash \check \ell\in\dot D$.
Let $\nu=\sigma_s$, and let $n=|\nu|$.  Since $s\leq r^*$, the condition
$r^*$ is extensible to $\nu$.

We claim that $(\eta,g_n,\mathcal X_r)$ is extensible to $\nu$.  Indeed, for
every proper initial segment $\tau$ of $\nu$, the definition of $h^*$ and
condition (3) of the construction give $h^*(\tau)=g_n(\tau)$.
The side condition is the same, namely $\mathcal X_r$.  Hence the
extensibility of $r^*$ to $\nu$ is exactly the extensibility of
$(\eta,g_n,\mathcal X_r)$ to $\nu$.

By (6)(i), applied to this $\nu\in L_n$, we have $\rho_\ell(\nu)=\infty$.
On the other hand, $s$ has stem $\nu$ and $s \Vdash \check \ell\in\dot D$.  Hence,
$\nu\in B_\ell$, and therefore $\rho_\ell(\nu)=0$, a contradiction.

Thus, $r^*\Vdash \dot D\cap \check C_\alpha\subseteq \check E$, completing the proof.
\end{proof}

\begin{lemma}\label{lem:tree-preserves-intertwined}
Let $T$ be an $\omega$-tree, and let $\mathcal X$ be an
almost disjoint family of subtrees of $T$.
Assume that $\mathcal I_T(\mathcal X)$ is proper.
Then $Q_T(\mathcal X)$ preserves intertwined families.
That is, if $\langle\mathcal B,\mathcal C\rangle$
is intertwined in the ground model, then
\[
  Q_T(\mathcal X)\Vdash
  \langle\check{\mathcal B},\check{\mathcal C}\rangle\text{ is intertwined}.
\]
\end{lemma}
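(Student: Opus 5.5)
The plan is to assemble the three preceding lemmas into a proof by contradiction. Almost disjointness of $\mathcal B\cup\mathcal C$ is absolute, since finiteness of intersections of ground-model sets does not change. The sizes $|\mathcal B|=|\mathcal C|=\aleph_1$ are also preserved, because $Q_T(\mathcal X)$ is ccc by Lemma~\ref{lem:QT-sigma-centered} and so preserves $\omega_1$. It therefore remains to verify the defining implication of intertwinedness in the extension.

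Suppose, towards a contradiction, that some condition $p_0$ and some name $\dot D$ for a subset of $\omega$ satisfy
\[
  p_0\Vdash |\dot D\cap \check C_\alpha|=\omega \text{ for uncountably many }\alpha<\omega_1,
\]
and yet $p_0$ does not force that $\dot D$ meets all but countably many $B_\beta$ infinitely. Extending $p_0$, we get $p\leq p_0$ which forces both of the following:
\begin{itemize}
  \item $\dot D$ meets uncountably many $C_\alpha$ infinitely;
  \item $\dot D\cap B_\beta$ is finite for uncountably many $\beta$.
\end{itemize}
Here ``uncountably many'' is evaluated in the extension, and it coincides with uncountable in the ground model because $\omega_1$ is preserved.

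Apply Lemma~\ref{lem:QT-preservation-reduction} to $p$ and $\dot D$. This yields $k$, an uncountable $I$, a condition $q=(\sigma,h,\mathcal R)\leq p$ and conditions $q_\beta$ with properties (a)--(d). Compute the ranks and the sets $D_{\eta,i}$ from $\mathcal R$ and $\dot D$, and split into cases according to whether some stem $\tau$ to which $q$ is extensible has
\[
  \{\alpha: \forall i\ |D_{\tau,i}\cap C_\alpha|=\omega\}
\]
uncountable.
\begin{itemize}
  \item If such a $\tau$ exists, Lemma~\ref{lem:QT-case-one-impossible} says this is impossible.
  \item If no such $\tau$ exists, then for every $\eta$ to which $q$ is extensible the set $\Theta_\eta$ is countable. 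Lemma~\ref{lem:QT-case-two-forces-finite} then gives that $q$ forces only countably many $\alpha$ to satisfy $|\dot D\cap C_\alpha|=\omega$.
\end{itemize}
In the second case, $q\leq p$ contradicts the fact that $p$ forces uncountably many such $\alpha$. So both cases are contradictory, and every condition forces the implication. Hence $Q_T(\mathcal X)\Vdash\langle\check{\mathcal B},\check{\mathcal C}\rangle$ is intertwined.

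The only delicate point is bookkeeping: passing correctly from ``not forced'' to a single condition $p$ forcing both hypotheses. This uses the fullness of forcing, and the absoluteness of uncountability that follows from ccc. All of the real combinatorial work is already contained in the three cited lemmas.
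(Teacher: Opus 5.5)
Your proposal is correct and follows the same approach as the paper: reduce via Lemma~\ref{lem:QT-preservation-reduction}, use Lemma~\ref{lem:QT-case-one-impossible} to make every $\Theta_\eta$ countable, then apply Lemma~\ref{lem:QT-case-two-forces-finite} to get $q\leq p$ forcing that only countably many $C_\alpha$ meet $\dot D$ infinitely. The only difference is framing: you argue by contradiction from a condition forcing both the antecedent and the negated consequent, while the paper states the contrapositive directly; your extra remarks about ccc preserving $\omega_1$ are correct.
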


\begin{proof}
The almost disjointness of $\mathcal B\cup\mathcal C$ is absolute.
We verify the second clause of intertwinedness by contraposition.
Enumerate $\mathcal B$ and $\mathcal C$ as $\{B_\alpha:\alpha<\omega_1\}$ and $\{C_\alpha:\alpha<\omega_1\}$, respectively.

Let $\dot D$ be a $Q_T(\mathcal X)$-name for a subset of $\omega$, and let
$p\in Q_T(\mathcal X)$ be such that
\[
  p \Vdash \dot D\cap \check B_\beta\text{ is finite for uncountably many }\beta<\omega_1.
\]
Apply Lemma~\ref{lem:QT-preservation-reduction}, and obtain
$k<\omega$, an uncountable $I\subseteq\omega_1$, a condition
$q=(\sigma,h,\mathcal R)\leq p$, and conditions
\[
  q_\beta=(\sigma,h_\beta,\mathcal R\cup\mathcal S_\beta)
  \qquad(\beta\in I)
\]
with the conclusions of that lemma.

Let the ranks $\rho_\ell$ and the sets $D_{\eta,i}$ be those defined in
Definition~\ref{def:QT-ranks}, with this $\mathcal R$ and this name $\dot D$.
For $\eta\in T^{\uparrow<\omega}$ to which $q$ is extensible, put $\Theta_\eta=
  \{\alpha<\omega_1:
    \forall i<\omega\ |D_{\eta,i}\cap C_\alpha|=\omega\}$.
By Lemma~\ref{lem:QT-case-one-impossible}, each $\Theta_\eta$ is countable.
Therefore Lemma~\ref{lem:QT-case-two-forces-finite} gives
\[
  q\Vdash_{Q_T(\mathcal X)}
  \{\alpha<\omega_1:|\dot D\cap \check C_\alpha|=\omega\}\text{ is countable}.
\]
This proves the contrapositive of the defining implication for intertwinedness.
\end{proof}

\begin{lemma}\label{lem:fs-iteration-preserves-intertwined}
Let $\langle\mathbb P_\xi,\dot{\mathbb Q}_\xi:\xi<\delta\rangle$
be a finite-support iteration of forcing notions with the countable chain condition.
Let $\langle\mathcal B,\mathcal C\rangle$ be an intertwined family in the ground
model.  Assume that, for every $\xi<\delta$,
\[
  \mathbb P_\xi\Vdash
  \dot{\mathbb Q}_\xi\text{ preserves }
  \langle\check{\mathcal B},\check{\mathcal C}\rangle
  \text{ as an intertwined family}.
\]
Then
\[
  \mathbb P_\delta\Vdash
  \langle\check{\mathcal B},\check{\mathcal C}\rangle
  \text{ is intertwined}.
\]
\end{lemma}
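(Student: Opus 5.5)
We argue by induction on $\delta$, proving the stronger statement that $\mathbb P_\xi\Vdash\langle\check{\mathcal B},\check{\mathcal C}\rangle$ is intertwined for every $\xi\leq\delta$. Almost disjointness of $\mathcal B\cup\mathcal C$ is absolute, so only the second clause of Definition~\ref{def:intertwined} needs attention. We first reformulate it: $\langle\mathcal B,\mathcal C\rangle$ is intertwined in an extension iff for every $E\subseteq\omega$ in that extension, if $\{\beta:|E\cap B_\beta|<\omega\}$ is uncountable then $\{\alpha:|E\cap C_\alpha|=\omega\}$ is countable.

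\emph{Successor step.} Suppose $\mathbb P_\xi$ forces intertwinedness. By hypothesis, $\mathbb P_\xi$ forces that $\dot{\mathbb Q}_\xi$ preserves intertwinedness. Since $\mathbb P_{\xi+1}\cong\mathbb P_\xi*\dot{\mathbb Q}_\xi$, the family is intertwined in $V^{\mathbb P_{\xi+1}}$. Here $\omega_1$ is the same in all models by ccc, so ``uncountable'' means the same thing throughout.

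\emph{Limit step, $\operatorname{cf}(\delta)>\omega$.} Every real in $V^{\mathbb P_\delta}$ has a nice name involving countably many antichains. By ccc and finite supports, each such antichain lives in some $\mathbb P_\xi$ with $\xi<\delta$, so the real lies in $V^{\mathbb P_\xi}$. The properties ``$E\cap B_\beta$ finite'' and ``$E\cap C_\alpha$ infinite'' are absolute. Uncountability of subsets of $\omega_1$ is also absolute between the two models, since $\omega_1$ is preserved. Hence the instance of intertwinedness for $E$ in $V^{\mathbb P_\delta}$ follows from the corresponding instance in $V^{\mathbb P_\xi}$.

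\emph{Limit step, $\operatorname{cf}(\delta)=\omega$.} This is the main obstacle, because a new real need not appear at any earlier stage. Fix a $\mathbb P_\delta$-name $\dot D$ and $p\in\mathbb P_\delta$ with
\[
p\Vdash\{\beta:|\dot D\cap B_\beta|<\omega\}\text{ uncountable}.
\]
We argue as in Brendle's original limit argument.

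\begin{enumerate}[label=(\arabic*)]
\item For uncountably many $\beta$, choose $p_\beta\leq p$ and $k_\beta$ with $p_\beta\Vdash\dot D\cap B_\beta\subseteq k_\beta$.
\item Fix $\delta_n\nearrow\delta$. Thin out so that $k_\beta=k$, and so that all supports $\operatorname{supp}(p_\beta)\subseteq\delta_{n}$ for a single fixed $n$ modulo the tail. In fact, finite supports let us fix $n$ with $\operatorname{supp}(p_\beta)\subseteq\delta_n$ for uncountably many $\beta$.
\item In $V^{\mathbb P_{\delta_n}}$, define the quotient-approximation
\[
\dot E=\{\ell:\exists r\in\mathbb P_\delta/\dot G_{\delta_n}\ (r\Vdash\ell\in\dot D)\}.
\]
Then consider the set of $\beta$ such that $p_\beta\restriction\delta_n\in G_{\delta_n}$. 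By ccc, uncountably many $p_\beta\restriction\delta_n$ are forced into the generic by some condition. For those $\beta$, $\dot E\cap B_\beta$ should be controlled, after a further refinement using that $p_\beta$ lives below $\delta_n$ and thus $\dot E\cap B_\beta\subseteq k$.
\item Apply the inductive hypothesis in $V^{\mathbb P_{\delta_n}}$ to conclude that $\dot E$ meets only countably many $C_\alpha$ infinitely.
\item Since $\dot D\subseteq\dot E$ is forced, the same conclusion holds for $\dot D$.
\end{enumerate}

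I expect the delicate point to be the justification of step (3): choosing $p_\beta$ whose support lies below $\delta_n$ generally fails. The plan is instead to use the standard device of working with $p_\beta\restriction\delta_n$ together with an $\omega_1$-sized set of possible tails, and to pass to a single $n$ by the pigeonhole principle over countably many $n$.
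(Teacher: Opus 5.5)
Your steps (1)--(5) are the paper's proof. The successor case follows from the hypothesis. The case $\operatorname{cf}(\delta)>\omega$ is handled by pulling the name back to some $\mathbb P_\xi$. For $\operatorname{cf}(\delta)=\omega$ you have exactly the paper's set $D^*$ (your $\dot E$), a condition $q\leq p$ in $\mathbb P_{\delta_n}$ forcing $J=\{\beta\in I:p_\beta\in\dot G_{\delta_n}\}$ to be uncountable, the induction hypothesis applied in $V[G_{\delta_n}]$, and the inclusion $\dot D_H\subseteq D^*$.

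Your closing paragraph, however, is mistaken and should be dropped. Putting uncountably many $p_\beta$ into a single $\mathbb P_{\delta_n}$ does not ``generally fail''. At a limit stage of a finite-support iteration, $\mathbb P_\delta=\bigcup_{\xi<\delta}\mathbb P_\xi$, so each $p_\beta$ (and $p$) lies in some $\mathbb P_{\delta_{n_\beta}}$. Pigeonholing over the countably many values of $n_\beta$ gives the single $n$ you already claimed in step (2), and the paper uses precisely this. The fallback you propose instead would break step (3). If you only know $p_\beta\restriction\delta_n\in G_{\delta_n}$, a witness $r$ for $\ell\in D^*$ can be incompatible with the part of $p_\beta$ above $\delta_n$. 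Then nothing bounds $D^*\cap B_\beta$.

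Once $p_\beta\in\mathbb P_{\delta_n}$, step (3) needs no further refinement. Suppose $\beta\in J$ and $\ell\in D^*\cap B_\beta\setminus k$ is witnessed by $r$. Take $s_0\in G_{\delta_n}$ below both $p_\beta$ and $r\restriction\delta_n$, and let $s$ be $s_0$ together with the coordinates of $r$ at or above $\delta_n$. Then $s\leq p_\beta,r$, so $s$ forces both $\check\ell\in\dot D$ and $\check\ell\notin\dot D$, a contradiction.

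The condition $q$ comes from the usual ccc argument. Suppose $p$ forced $J$ to be countable. Then some countable ground-model $I'$ would satisfy $p\Vdash J\subseteq\check I'$. But for each $\beta\in I$ we have $p_\beta\leq p$ and $p_\beta\Vdash\check\beta\in J$, so $\beta\in I'$. Hence $I\subseteq I'$, which is impossible.
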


\begin{proof}
We argue by induction on $\delta$.  Successor stages follow from the
hypothesis.  Let $\delta$ be a limit ordinal.  The almost disjointness of
$\mathcal B\cup\mathcal C$ is absolute, so it remains to preserve the second
clause of intertwinedness.
Write $\mathcal B=\{B_\alpha:\alpha<\omega_1\}$ and $\mathcal C=\{C_\alpha:\alpha<\omega_1\}$.

Let $\dot D$ be a nice $\mathbb P_\delta$-name for a subset of $\omega$, and let
$p\in\mathbb P_\delta$ be such that
\[
  p\Vdash_{\mathbb P_\delta}
  \dot D\cap \check B_\beta\text{ is finite for uncountably many }\beta<\omega_1.
\]
For uncountably many such $\beta$, choose
$p_\beta\leq p$ and $k_\beta<\omega$ such that
\[
  p_\beta\Vdash_{\mathbb P_\delta}
  \dot D\cap \check B_\beta\subseteq\check k_\beta.
\]
Thinning the index set, assume $k_\beta=k$ for all $\beta$ under
consideration.

If $\operatorname{cf}(\delta)>\omega$, then, since $\mathbb P_\delta$ is ccc
and $\dot D$ is a nice name for a subset of $\omega$, there is $\xi<\delta$ such
that $\dot D$ is a $\mathbb P_\xi$-name and $p\in\mathbb P_\xi$.
Moreover,
\[
  p\Vdash_{\mathbb P_\xi}
  \dot D\cap B_\beta\text{ is finite for uncountably many }\beta<\omega_1.
\]
Indeed, otherwise some condition in $\mathbb P_\xi$ below $p$ would force the
negation, and the same condition, regarded as a condition in $\mathbb P_\delta$,
would contradict the choice of $p$.

By the induction hypothesis applied to $\mathbb P_\xi$, there is
$q\leq p$ in $\mathbb P_\xi$ such that
\[
  q\Vdash_{\mathbb P_\xi}
  \dot D\cap C_\alpha\text{ is finite for all but countably many }
  \alpha<\omega_1.
\]
Since $\mathbb P_\xi$ is a complete suborder of $\mathbb P_\delta$ and
$\dot D$ is a $\mathbb P_\xi$-name, the same $q$, regarded as a condition in
$\mathbb P_\delta$, forces the same conclusion.

It remains to consider the case $\operatorname{cf}(\delta)=\omega$.  Fix an
increasing sequence $(\delta_n:n<\omega)$ cofinal in $\delta$.  Since each
$p_\beta$ has finite support, there are $n<\omega$ and an uncountable set
$I\subseteq\omega_1$ such that $p\in\mathbb P_{\delta_n}$ and
$p_\beta\in\mathbb P_{\delta_n}$ for every $\beta\in I$.

We claim there is a condition $q\leq p$ in
$\mathbb P_{\delta_n}$ such that
\[
  q\Vdash_{\mathbb P_{\delta_n}}
  \{\beta\in I:p_\beta\in\dot G_{\delta_n}\}\text{ is uncountable}.
\]
Indeed, suppose otherwise.  Then $p$ would force this set to be countable, so
by the countable chain condition, there exists a countable set $I'\subseteq I$ such that
\[
  p\Vdash_{\mathbb P_{\delta_n}}
  \{\beta\in I:p_\beta\in\dot G_{\delta_n}\}\subseteq \check I'.
\]
But then, for each $\beta\in I$, $p_\beta \Vdash \check \beta\in\check I'$, so $I\subseteq I'$ in the ground model, a contradiction.

Now working in a countable transitive model $M$, let $G_{\delta_n}$ be a $\mathbb P_{\delta_n}$-generic filter over $M$ with $q \in G_{\delta_n}$.
Since $q\leq p$, the generic filter also contains $p$, so $J=\{\beta\in I:p_\beta\in G_{\delta_n}\}$ is uncountable.
Define
\[
  D^*=\{\ell<\omega:
    \exists r\in\mathbb P_\delta\,
    (r\restriction\delta_n\in G_{\delta_n}
    \text{ and }
    r\Vdash_{\mathbb P_\delta}\check\ell\in\dot D)\}.
\]
For every $\beta\in J$, we have $D^*\cap B_\beta\subseteq k$.
Otherwise, if $\ell\in D^*\cap B_\beta\setminus k$, let $r \in \mathbb P_\delta$ be such that $r\restriction\delta_n\in G_{\delta_n}$ and $r\Vdash_{\mathbb P_\delta}\check\ell\in\dot D$.
Since $p_\beta\in G_{\delta_n}$ and $r\restriction\delta_n\in G_{\delta_n}$, there is a condition in $G_{\delta_n}$ extending both $p_\beta$ and $r\restriction\delta_n$.
Combining it with the tail of $r$, we get a common extension $s\leq p_\beta,r$ in $\mathbb P_\delta$.
Then $s\Vdash \check \ell\in\dot D$.
On the other hand, $s\leq p_\beta$ and $p_\beta\Vdash \dot D\cap\check B_\beta\subseteq\check k$; since $\ell\in B_\beta\setminus k$, this gives $s\Vdash \check \ell\notin\dot D$, a contradiction.

By the induction hypothesis, the pair
$\langle\mathcal B,\mathcal C\rangle$ is intertwined in the
$\mathbb P_{\delta_n}$-extension.  Therefore, since $J$ is uncountable and
$D^*\cap B_\beta\subseteq k$ for every $\beta\in J$, the set $\Theta=\{\alpha<\omega_1:|D^*\cap C_\alpha|=\omega\}$
is countable in $V[G_{\delta_n}]$.

Let $H$ be a $\mathbb P_\delta$-generic
filter extending $G_{\delta_n}$.
If $\ell\in\dot D_H$, then some $r\in H$ forces
$\check\ell\in\dot D$.  Since $r\restriction\delta_n\in G_{\delta_n}$, the
definition of $D^*$ gives $\ell\in D^*$.  Hence $\dot D_H\subseteq D^*$.
Consequently, for every $\alpha\notin\Theta$,
\[
  |\dot D_H\cap C_\alpha|
  \leq |D^*\cap C_\alpha|
  <\omega.
\]
Thus, in every $\mathbb P_\delta$-generic extension containing $q$,
$\dot D\cap C_\alpha$ is finite for all but countably many $\alpha<\omega_1$.
Equivalently,
\[
  q\Vdash_{\mathbb P_\delta}
  \dot D\cap \check C_\alpha\text{ is finite for all but countably many }
  \alpha<\omega_1.
\]
Since $q\leq p$, this proves the limit step and completes the induction.
\end{proof}

\subsection{The iteration and final model}

In the definition below, it would make sense for us to restrict $S$ to a subset of $\kappa\times\omega$ for some cardinal $\kappa$, but this restriction is not needed.
The reader may keep in mind that this is the intended context for the definition.

\begin{definition}\label{def:Col}
  Let $S$ be a relation.
  The \emph{collection determined by $S$} is $\Col(S)=\{\{n\in\omega: (a,n)\in S\}: a \in \dom S\}$.
\end{definition}

We shall use the following bounded form of the maximal principle.  Let
$\kappa$ be a regular cardinal and let $\mathbb P\in H(\kappa)$ be a forcing
preorder, where $H(\kappa)$ denotes the set of all sets whose transitive closures
have cardinality less than $\kappa$.  If $p\in\mathbb P$ and
\[
  p\Vdash_{\mathbb P}\exists x\bigl(x\in H(\check\kappa)\wedge\varphi(x)\bigr),
\]
where $\varphi$ is a formula with $x$ as its only free variable, possibly with
$\mathbb P$-names as parameters, then there is a $\mathbb P$-name
$\dot x\in H(\kappa)$ such that $p\Vdash_{\mathbb P}\dot x\in H(\check\kappa)\wedge\varphi(\dot x)$.

\begin{thm}\label{thm:consistent-ap-below-at}
Assume GCH and let $\lambda>\omega_1$ be regular.  There is a ccc forcing
extension in which
\[
  \mathfrak{ap}=\omega_1<
  \mathfrak{at}=\mathfrak q_2=\mathfrak q_{2\frac{1}{2}}=\mathfrak c=\lambda.
\]
In particular,
\[
  \operatorname{Con}(\mathrm{ZFC})
  \Rightarrow
  \operatorname{Con}(\mathrm{ZFC}+\mathfrak{ap}<\mathfrak{at}).
\]
\end{thm}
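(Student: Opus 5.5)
Start from a GCH ground model $V$ and fix, in $V$, an intertwined pair $\langle\mathcal B,\mathcal C\rangle$. Brendle \cite{brendle1999dow} shows such a pair exists under CH, which GCH gives. We then perform a finite-support iteration $\langle\mathbb P_\xi,\dot{\mathbb Q}_\xi:\xi<\lambda\rangle$ of length $\lambda$. Each iterand is forced to be of the form $Q_T(\mathcal X)$ for some $\omega$-tree $T$ and some almost disjoint family $\mathcal X$ of infinite subtrees of $T$ with $|\mathcal X|<\lambda$ and $\mathcal I_T(\mathcal X)$ proper.

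\textbf{Bookkeeping.} We fix a bookkeeping function from $\lambda$ onto $\lambda\times\lambda$ in which each pair is hit cofinally often. Each $\mathbb P_\xi$ is ccc (finite support iteration of $\sigma$-centered posets, Lemma~\ref{lem:QT-sigma-centered}) and has size at most $\lambda$. By GCH and regularity of $\lambda$, there are only $\lambda$ nice $\mathbb P_\xi$-names for triples $(T,\mathcal X,\mathcal Y)$ with $|\mathcal X|<\lambda$, where $T$ is coded as a relation on $\omega$ and $\mathcal X,\mathcal Y$ via $\Col$ of a relation on $\kappa\times\omega$ for some $\kappa<\lambda$. The bounded maximal principle stated before the theorem lets us take these names in $H(\lambda)$, so they can be enumerated in type $\lambda$. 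At stage $\xi$ the bookkeeping hands us such a name. If $\mathcal Y\subseteq\mathcal X$ and $\mathcal X\setminus\mathcal Y$ is infinite, we let $\dot{\mathbb Q}_\xi=Q_T(\mathcal X\setminus\mathcal Y)$; otherwise we use a trivial or Cohen forcing. Cohen reals also appear cofinally, which gives $\mathfrak c=\lambda$: $\mathfrak c\ge\lambda$ from the $\lambda$ new reals, and $\mathfrak c\le\lambda$ by counting nice names using GCH.

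\textbf{Lower bound on $\mathfrak{at}$.} Let $(T,\mathcal X,\mathcal Y)$ be a triple in the final model with $|\mathcal X|<\lambda$. Because of the ccc and the regularity of $\lambda>|\mathcal X|$, the triple already lies in some intermediate model $V[G_\xi]$, and it is handled at some later stage $\eta$. If $\mathcal X\setminus\mathcal Y$ is infinite, Lemma~\ref{lem:QT-adds-separator} gives a generic $D$ with $|D\cap Z|<\omega$ for every $Z\in\mathcal X\setminus\mathcal Y$. Each $Y\in\mathcal Y$ is almost disjoint from every member of $\mathcal X\setminus\mathcal Y$ and is infinite, so $Y\notin\mathcal I_T(\mathcal X\setminus\mathcal Y)$; hence $|D\cap Y|=\omega$. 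These properties are absolute upward. If $\mathcal X\setminus\mathcal Y$ is finite, take $D=T\setminus\bigcup(\mathcal X\setminus\mathcal Y)$. Either way $\mathfrak{at}\ge\lambda$, and combined with $\mathfrak{at}=\mathfrak q_2\le\mathfrak q_{2\frac12}\le\mathfrak q\le\mathfrak c$ (Theorem~\ref{thm:at-equals-q2}) we get $\mathfrak{at}=\mathfrak q_2=\mathfrak q_{2\frac12}=\mathfrak c=\lambda$.

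\textbf{Upper bound on $\mathfrak{ap}$.} Lemma~\ref{lem:tree-preserves-intertwined} shows that each nontrivial iterand preserves intertwined families, and Cohen forcing does too: it is a $Q_T$ instance, or Brendle's argument applies. By Lemma~\ref{lem:fs-iteration-preserves-intertwined}, $\langle\mathcal B,\mathcal C\rangle$ remains intertwined in the final model. As noted after Definition~\ref{def:intertwined}, $\mathcal C$ then cannot be weakly separated from $\mathcal B$: a separator $X$ would meet every $C\in\mathcal C$ infinitely, so by intertwinedness it would meet cocountably many $B\in\mathcal B$ infinitely. Therefore $\mathfrak{ap}=\omega_1$.

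The main obstacle is the bookkeeping. One must check that every relevant triple of size $<\lambda$ appears at a bounded stage, which needs ccc together with $\lambda^{<\lambda}=\lambda$ under GCH. One must also verify the preservation hypothesis of Lemma~\ref{lem:fs-iteration-preserves-intertwined} for each iterand in the relevant intermediate model, including that $\mathcal I_T$ is proper there.
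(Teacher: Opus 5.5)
Your proposal is correct and follows essentially the paper's route: Brendle's intertwined pair in the GCH ground model, a finite-support iteration of the forcings $Q_T(\cdot)$ driven by a bookkeeping of nice names for small tree configurations, preservation via Lemmas~\ref{lem:tree-preserves-intertwined} and~\ref{lem:fs-iteration-preserves-intertwined}, and the chain $\lambda\le\mathfrak{at}\le\mathfrak q\le\mathfrak c\le\lambda$. Two small slips do not affect the argument. First, the Cohen stages are superfluous, and Cohen forcing is not an instance of $Q_T$ (with a proper ideal, $Q_T$ adds a dominating real), although as a countable poset it does trivially preserve intertwinedness. Second, the iterand names should be taken in $H(\lambda^+)$ rather than $H(\lambda)$, which your count of $\lambda$ many names already accommodates.
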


\begin{proof}
Start with a model of GCH.  By \cite[Lemma~2.5]{brendle1999dow}, there exists an
intertwined family
\[
  \langle\mathcal B,\mathcal C\rangle
  =\langle\{B_\alpha:\alpha<\omega_1\},
           \{C_\alpha:\alpha<\omega_1\}\rangle.
\]

Fix a surjection $f:\lambda\to\lambda\times\lambda$ such that, for every
$\xi, \beta, \eta<\lambda$, if $f(\xi)=(\beta,\eta)$, then $\eta\leq\xi$.
Also, fix a well-ordering $\sqsubseteq$ of $H(\lambda^+)$.

We shall construct a finite-support iteration
\[
  \bigl\langle((\mathbb P_\xi, \leq_\xi, \mathbbm 1_\xi):\xi\leq\lambda),
  ((\dot{\mathbb Q}_\xi, \dot{\leq}_\xi, \dot{\mathbbm 1}_\xi):\xi<\lambda)\bigr\rangle
\]
along with
\[
  \Bigl(\bigl(\dot{\preceq}^\xi_\beta,\dot S^\xi_\beta,\kappa^\xi_\beta:\beta<\lambda\bigr):\xi<\lambda\Bigr)
\]
such that:

\begin{enumerate}[label=(\arabic*)]
  \item for every $\xi\leq\lambda$, $(\mathbb P_\xi, \leq_\xi, \mathbbm 1_\xi)$ is a forcing preorder with the countable chain condition and top element $\mathbbm 1_\xi=\emptyset$, and $\mathbb P_\xi$ is a set of functions whose domains are finite subsets of $\xi$;\label{item:iteration-ccc}
  \item $\mathbb P_0=\{\emptyset\}$ is trivially ordered;\label{item:iteration-trivial}
  \item for every $\xi<\lambda$, $(\mathbb P_\xi, \leq_\xi, \mathbbm 1_\xi)\in H(\lambda^+)$ and $(\dot{\mathbb Q}_\xi, \dot{\leq}_\xi, \dot{\mathbbm 1}_\xi)\in H(\lambda^+)$;\label{item:hereditarily-small}
  \item for every $\beta, \xi\leq \lambda$, if $\beta\leq \xi$ then $\mathbb P_\beta$ is a complete suborder of $\mathbb P_\xi$;
  \item for every $\xi<\lambda$, $\dot{\mathbb Q}_\xi$, $\dot{\leq}_\xi$,
  and $\dot{\mathbbm 1}_\xi$ are $\mathbb P_\xi$-names, and
  \[
    \mathbb P_\xi\Vdash
    (\dot{\mathbb Q}_\xi,\dot{\leq}_\xi,\dot{\mathbbm 1}_\xi)
    \text{ is a ccc forcing preorder};
  \]
  \item for $\xi<\lambda$, for every $p \in \mathbb P_\xi$ and $\alpha \in \dom p$, $p(\alpha)\in \dom \dot{\mathbb Q}_\alpha$ and $\mathbb P_\xi\Vdash p(\alpha)\in \dot{\mathbb Q}_\alpha$;
  \item \label{item:successor-step} for every $\xi<\lambda$,
  \[
    \mathbb P_{\xi+1}=\mathbb P_\xi\cup
    \{p\cup\{(\xi,\dot q)\}:p\in\mathbb P_\xi,\ 
    \dot q\in\dom\dot{\mathbb Q}_\xi,\text{ and }
    \mathbb P_\xi\Vdash \dot q\in\dot{\mathbb Q}_\xi\};
  \]
  \item \label{item:successor-step-2} for every $\xi<\lambda$, if $p,q\in\mathbb P_{\xi+1}$, then
  $p\leq_{\xi+1}q$ if and only if $\dom q\subseteq\dom p$,
  $p|\xi\leq_\xi q|\xi$, and if $\xi\in\dom q$, then
  \[
    p|\xi\Vdash_\xi p(\xi)\,\dot{\leq}_\xi\, q(\xi);
  \]
  \item if $\xi\leq \lambda$ is a limit ordinal, then
  $\mathbb P_\xi=\bigcup_{\beta<\xi}\mathbb P_\beta$, and
  $\leq_\xi=\bigcup_{\beta<\xi}\leq_\beta$; \label{item:limit-step}
  \item \label{item:listing} for every $\xi<\lambda$, $\bigl((\dot{\preceq}^\xi_\beta,\dot S^\xi_\beta,\kappa^\xi_\beta):
    \beta<\lambda\bigr)$
  is the $\sqsubseteq$-least sequence of length $\lambda$, with repetitions allowed, whose range is the
  set of all triples
  $(\dot{\preceq},\dot S,\kappa)$ such that $\kappa<\lambda$ is a cardinal,
  $\dot{\preceq}$ is a $\mathbb P_\xi$-nice name for a subset of
  $\check{\omega\times\omega}$, $\dot S$ is a $\mathbb P_\xi$-nice name for
  a subset of $\check{\kappa\times\omega}$, and
  \[
  \begin{aligned}
  \mathbb P_\xi\Vdash{}&(\omega, \dot{\preceq}) \text{ is an } \omega\text{-tree and}\\
  &\Col(\dot S)\text{ is an infinite almost disjoint family of subtrees of }(\omega,\dot{\preceq})
  \end{aligned}
  \]
  \item \label{item:forcing-definitions} for every $\xi,\beta,\eta<\lambda$,
  if $f(\xi)=(\beta,\eta)$, then $\mathbb P_\xi\Vdash
    (\dot{\mathbb Q}_\xi,\dot{\leq}_\xi,\dot{\mathbbm 1}_\xi)
    =
    Q_{(\omega,\dot{\preceq}^\eta_\beta)}
    \bigl(\Col(\dot S^\eta_\beta)\bigr)$.
\end{enumerate}

We now carry out the finite-support recursion.
Assume we have defined $\langle((\mathbb P_\beta, \leq_\beta, \mathbbm 1_\beta):\beta<\xi), ((\dot{\mathbb Q}_\beta, \dot{\leq}_\beta, \dot{\mathbbm 1}_\beta):\beta<\xi)\rangle$ and $\bigl((\dot{\preceq}^\beta_\alpha,\dot S^\beta_\alpha,\kappa^\beta_\alpha:\alpha<\lambda):\beta<\xi\bigr)$ for some $\xi\leq\lambda$.
We show how to define $(\mathbb P_\xi, \leq_\xi, \mathbbm 1_\xi)$, and, in case $\xi<\lambda$, how to define $(\dot{\mathbb Q}_\xi, \dot{\leq}_\xi, \dot{\mathbbm 1}_\xi)$ and $\bigl((\dot{\preceq}^\xi_\beta,\dot S^\xi_\beta,\kappa^\xi_\beta:\beta<\lambda)\bigr)$.

If $\xi=0$, then $\mathbb P_0=\{\emptyset\}$ is ordered trivially.
If $\xi\leq\lambda$ is a limit ordinal, we take unions as in item~\ref{item:limit-step}.
For the successor step $\xi=\zeta+1<\lambda$, we define
$\mathbb P_{\zeta+1}$ as in items~\ref{item:successor-step}
and~\ref{item:successor-step-2}.
This defines $(\mathbb P_\xi,\leq_\xi,\mathbbm 1_\xi)$; the corresponding iteration clauses follow from the induction hypothesis and from the usual finite-support iteration construction.

Now, assume $\xi<\lambda$. 
By item~\ref{item:hereditarily-small}, $|\mathbb P_\xi|\leq\lambda$.
Moreover, finite-support iterations of ccc posets are ccc, so
$\mathbb P_\xi$ is ccc.
There are at most $\bigl|[\mathbb P_\xi]^\omega\bigr|^{|\omega\times\omega|}
=|\mathbb P_\xi|^\omega\leq\lambda^\omega=\lambda$ nice names for subsets of $\check{\omega\times\omega}$.
Likewise, given $\kappa<\lambda$, there are at most
$|\mathbb P_\xi|^{\max\{\kappa,\omega\}}\leq \lambda$ nice names for subsets of $\check{\kappa\times \omega}$.
As there are at most $\lambda$ many cardinals below $\lambda$, the set of triples described in item~\ref{item:listing} has cardinality at most $\lambda$, and we may enumerate it as
$\bigl((\dot{\preceq}^\xi_\beta,\dot S^\xi_\beta,\kappa^\xi_\beta):\beta<\lambda\bigr)$, allowing repetitions.
Choose the $\sqsubseteq$-least such enumeration.

Finally, to define $\dot{\mathbb Q}_\xi$, $\dot{\leq}_\xi$, and $\dot{\mathbbm 1}_\xi$, let $\beta, \eta$ be such that $f(\xi)=(\beta,\eta)$.
Notice that $\eta\leq\xi$.
Then $\dot{\preceq}^\eta_\beta$ and $\dot S^\eta_\beta$ are $\mathbb P_\eta$-names, and therefore $\mathbb P_\xi$-names.
By the bounded version of the maximal principle, choose
$\mathbb P_\xi$-names $\dot{\mathbb Q}_\xi$, $\dot{\leq}_\xi$, and
$\dot{\mathbbm 1}_\xi$ in $H(\lambda^+)$ such that
\[
  \mathbb P_\xi\Vdash
  (\dot{\mathbb Q}_\xi,\dot{\leq}_\xi,\dot{\mathbbm 1}_\xi)
  =
  Q_{(\omega,\dot{\preceq}^\eta_\beta)}
  \bigl(\Col(\dot S^\eta_\beta)\bigr).
\]
These names can be selected as the $\sqsubseteq$-least such names.
This completes the construction.

It follows by induction from Lemmas \ref{lem:tree-preserves-intertwined}
and \ref{lem:fs-iteration-preserves-intertwined} that, for every $\xi\leq\lambda$,
\[
  \mathbb P_\xi\Vdash
  \langle\check{\mathcal B},\check{\mathcal C}\rangle
  \text{ is intertwined}.
\]
In particular, $\mathbb P_\lambda\Vdash \langle\check{\mathcal B},\check{\mathcal C}\rangle$ is intertwined.
Thus, $\mathbb P_\lambda\Vdash \mathfrak{ap}=\omega_1$.

By Lemma~\ref{prop:at-le-b} and the inequality
$\mathfrak q\leq\mathfrak c$, it remains to prove that
$\mathbb P_\lambda\Vdash \mathfrak c\leq\check \lambda\leq\mathfrak{at}$.

We start by showing that $\mathbb P_\lambda\Vdash \mathfrak c\leq \check \lambda$.
It suffices to show that there are at most $\lambda$ many nice $\mathbb P_\lambda$-names for subsets of $\omega$.
As $\mathbb P_\lambda$ has the countable chain condition, as $\mathbb P_\lambda=\bigcup_{\xi<\lambda}\mathbb P_\xi$, and as each $\mathbb P_\xi$ has cardinality less than $\lambda$, we have $|\mathbb P_\lambda|\leq \lambda$.
Hence, since $\mathbb P_\lambda$ has the countable chain condition, there are at most
\[
  \bigl|[\mathbb P_\lambda]^\omega\bigr|^\omega
  \leq\lambda^\omega=\lambda
\]
nice $\mathbb P_\lambda$-names for subsets of $\omega$.
Thus, $\mathbb P_\lambda\Vdash \mathfrak c\leq \check \lambda$.

Now we show that $\mathbb P_\lambda\Vdash \check \lambda\leq \mathfrak{at}$.
It suffices to prove this holds relativized to a countable transitive model $M$.

Let $G$ be a $\mathbb P_\lambda$-generic filter over $M$.
Working in $M[G]$, it is enough to show that for every $\omega$-tree $T$ and every pair $\mathcal X,\mathcal Y$ of disjoint families of infinite subtrees of $T$ such that $\mathcal X\cup\mathcal Y$ is almost disjoint and $|\mathcal X\cup\mathcal Y|<\lambda$, there exists $D\subseteq T$ such that $D\cap X$ is finite for every $X\in\mathcal X$ and $D\cap Y$ is infinite for every $Y\in\mathcal Y$.
This is exactly the weak-separation statement for the almost disjoint family
$\mathcal X\cup\mathcal Y$ and its subcollection $\mathcal Y$.
If $\mathcal Y=\emptyset$, take $D=\emptyset$.
If $\mathcal X$ is finite, take $D=T\setminus\bigcup\mathcal X$.
Thus we may assume that $\mathcal X$ is infinite and that $\mathcal Y$ is nonempty.

We may also assume that the underlying set of $T$ is $\omega$, by transporting
the tree structure along a bijection.
Let $\preceq\subseteq \omega\times \omega$ be the order of $T$.

Let $\kappa_X=|\mathcal X|$ and $\kappa_Y=|\mathcal Y|$.
Let $S_X\subseteq \kappa_X\times \omega$ be such that $\Col(S_X)=\mathcal X$ and $S_Y\subseteq \kappa_Y\times \omega$ be such that $\Col(S_Y)=\mathcal Y$.
Fix $\mathbb P_\lambda$-names $\dot\preceq'$, $\dot S_X'$, and $\dot S_Y'$ such that
$\preceq=\dot\preceq'_G$, $S_X=(\dot S_X')_G$, and $S_Y=(\dot S_Y')_G$.
By the maximal principle, we may assume that $\mathbb P_\lambda\Vdash \dot S_X'\subseteq \check{\kappa_X}\times\check\omega$, that $\mathbb P_\lambda\Vdash \dot S_Y'\subseteq \check{\kappa_Y}\times\check\omega$, and that
\[
\begin{aligned}
  \mathbb P_\lambda\Vdash{}&(\omega,\dot\preceq')\text{ is an }\omega\text{-tree,}\\
  &\Col(\dot S_X')\text{ is an infinite almost disjoint family of subtrees of }(\omega,\dot\preceq'),\\
  &\text{and }\Col(\dot S_X')\cup\Col(\dot S_Y')\text{ is an almost disjoint family of infinite subtrees}\\
  &\text{of }(\omega,\dot\preceq').
\end{aligned}
\]
Let $\dot S_X$, $\dot S_Y$, and $\dot\preceq$ be, respectively, nice names for subsets of $\check{\kappa_X}\times\check\omega$, $\check{\kappa_Y}\times\check\omega$, and $\check{\omega\times\omega}$ such that $\mathbb P_\lambda\Vdash \dot S_X=\dot S_X'$, $\mathbb P_\lambda\Vdash \dot S_Y=\dot S_Y'$, and $\mathbb P_\lambda\Vdash \dot\preceq=\dot\preceq'$.

As $\mathbb P_\lambda$ is ccc, as $\lambda$ is regular, and as
$\mathbb P_\lambda=\bigcup_{\xi<\lambda}\mathbb P_\xi$, there exists
$\nu<\lambda$ such that $\dot{\preceq}$, $\dot S_X$, and
$\dot S_Y$ are $\mathbb P_\nu$-names.
Choose $\beta<\lambda$ such that
\[
  (\dot{\preceq}^\nu_\beta,\dot S^\nu_\beta,\kappa^\nu_\beta)
  =
  (\dot{\preceq},\dot S_X,\kappa_X)
\]
and choose $\xi<\lambda$ such that $f(\xi)=(\beta,\nu)$.
Then, by item~\ref{item:forcing-definitions},
\[
  \mathbb P_\xi\Vdash
  (\dot{\mathbb Q}_\xi,\dot{\leq}_\xi,\dot{\mathbbm 1}_\xi)
  =
  Q_{(\omega,\dot{\preceq})}
  \bigl(\Col(\dot S_X)\bigr).
\]

Thus, in $M[G_\xi]$, the forcing at coordinate $\xi$ is
$Q_T(\mathcal X)$.
Moreover, $\mathcal X,\mathcal Y\in M[G_\xi]$, and every
$Y\in\mathcal Y$ lies outside $\mathcal I_T(\mathcal X)$, so this ideal is
proper.  By Lemma~\ref{lem:QT-adds-separator}, the generic set added at stage
$\xi$ gives a set $D \in M[G_{\xi+1}]\subseteq M[G]$ such that $D\cap X$ is
finite for every $X\in\mathcal X$ and $D\cap Y$ is infinite for every
$Y\in\mathcal Y$, as intended.
\end{proof}

%% file: sections/conclusion.tex
\section{Concluding remarks}

In this paper, we settled Problem~12 of \cite{banakh2023q} by proving that
$\mathfrak{adp}=\mathfrak{dp}$.
We also obtained the analogous identity for the dual asymmetric version of this
cardinal, $\mathfrak{adp}_2=\mathfrak{ap}$.

Moreover, we answered Problem~11(1) of
\cite{banakh2023q} by proving $\mathfrak{ap}\leq\mathfrak{at}= \mathfrak q_{2}$.
In Section~\ref{sec:small-diagonals}, we further showed that
$\mathfrak q_{1\Delta}=\mathfrak{at}=\mathfrak q_2$ and that
$\mathfrak q_{2\frac{1}{2}\Delta}=\mathfrak q_{2\frac{1}{2}}$.

Finally, the forcing construction in Section~\ref{sec:ap-below-at-model}
shows that $\mathfrak{at}=\mathfrak q_{2}$ can consistently be strictly larger than
$\mathfrak{ap}$, a phenomenon similar to the one observed by Brendle \cite{brendle1999dow} for $\mathfrak{ap}$ and $\mathfrak q$.

The inequalities and equalities established or recalled among the cardinals considered in
this paper are summarized in Figure~\ref{fig:q-cardinals-diagram}.  An arrow
$\kappa\to\lambda$ means $\kappa\leq\lambda$.
The relevant references are listed in the table below.

\begin{center}
\resizebox{\textwidth}{!}{%
\begin{tikzpicture}[
  node distance=10mm and 9mm,
  cardinal/.style={
    draw,
    rounded corners,
    inner xsep=5pt,
    inner ysep=3pt,
    align=center,
    font=\small
  },
  eqcard/.style={
    cardinal,
    double,
    double distance=0.7pt
  },
  arrow/.style={-{Latex[length=2mm]}, thick}
]

\node[cardinal] (p) {$\mathfrak p$};
\node[eqcard, right=of p] (dp) {$\mathfrak{dp}=\mathfrak{adp}$};
\node[cardinal, right=of dp] (qone) {$\mathfrak q_1$};
\node[eqcard, right=of qone] (qtwo) {$\mathfrak q_{1\Delta}$\\$=\mathfrak{at}=\mathfrak q_2$};
\node[cardinal, right=of qtwo] (qtwodelta) {$\mathfrak q_{2\Delta}$};
\node[eqcard, right=of qtwodelta] (qtwohalf) {$\mathfrak q_{2\frac{1}{2}\Delta}$\\$=\mathfrak q_{2\frac{1}{2}}$};

\node[eqcard, below=of dp] (ap) {$\mathfrak{ap}=\mathfrak{adp}_2$};
\node[cardinal, right=of qtwohalf] (q) {$\mathfrak q$};

\draw[arrow] (p) -- (dp);
\draw[arrow] (dp) -- (qone);
\draw[arrow] (qone) -- (qtwo);
\draw[arrow] (qtwo) -- (qtwodelta);
\draw[arrow] (qtwodelta) -- (qtwohalf);
\draw[arrow] (qtwohalf) -- (q);

\draw[arrow] (dp) -- (ap);
\draw[arrow] (ap) -| (qtwo);

\end{tikzpicture}
}

\captionof{figure}{Nonstrict inequalities among the cardinals considered in this paper.}
\label{fig:q-cardinals-diagram}
\end{center}

\begin{center}
\begin{tabular}{@{}ll@{}}
\toprule
\textbf{Relation} & \textbf{Reference} \\
\midrule

$\mathfrak p\leq\mathfrak{dp}$ 
& Brendle \cite{brendle1999dow}. \\

$\mathfrak{dp}\leq\mathfrak{ap}$ 
& Brendle \cite{brendle1999dow}. \\

$\mathfrak{dp}=\mathfrak{adp}$ 
& Theorem~\ref{thm:adp-equals-dp}. \\

$\mathfrak{adp}_2=\mathfrak{ap}$ 
& Theorem~\ref{thm:adp2}. \\

$\mathfrak{adp}\leq\mathfrak q_1$ 
& Banakh--Bazylevych \cite{banakh2023q}. \\

$\mathfrak q_1\leq\mathfrak q_2$
& Banakh--Bazylevych \cite{banakh2023q}. \\

$\mathfrak q_2\leq\mathfrak q_{2\Delta}$ 
& By definition of $\mathfrak q_{2\Delta}$. \\

$\mathfrak q_{2\Delta}\leq\mathfrak q_{2\frac{1}{2}}$ 
& Lemma~\ref{lem:second-countable-urysohn-gdelta-diagonal}. \\

$\mathfrak q_{2\frac{1}{2}}\leq \mathfrak q$ 
& Banakh--Bazylevych \cite{banakh2023q}. \\

$\mathfrak{ap}\leq\mathfrak{at}$ 
& Definition~\ref{def:at}. \\

$\mathfrak{at}=\mathfrak q_2$
& Theorem~\ref{thm:at-equals-q2}. \\

$\mathfrak q_{1\Delta}=\mathfrak{at}$
& Theorem~\ref{thm:q1delta-equals-at}. \\

$\mathfrak q_{2\frac{1}{2}\Delta}=\mathfrak q_{2\frac{1}{2}}$
& Lemma~\ref{lem:second-countable-urysohn-gdelta-diagonal}. \\

$\mathfrak{at}\leq\mathfrak q\leq\mathfrak b$
& Lemma~\ref{prop:at-le-b} and \cite{miller1984special}. \\

\bottomrule
\end{tabular}
\end{center}

The following table records the known consistency results yielding strict
inequalities among the cardinals appearing in the diagram.

\begin{center}
\begin{tabular}{@{}ll@{}}
\toprule
\textbf{Consistent inequality} & \textbf{Reference} \\
\midrule

$\mathfrak p<\mathfrak{dp}$ 
& Dow, as quoted in Brendle \cite{brendle1999dow}. \\

$\mathfrak{dp}<\mathfrak{ap}$ 
& Brendle \cite[Theorem~A]{brendle1999dow}. \\

$\mathfrak{ap}<\mathfrak{at}=\mathfrak q_2$
& Theorems~\ref{thm:consistent-ap-below-at} and~\ref{thm:at-equals-q2}. \\

\bottomrule
\end{tabular}
\end{center}

We therefore record the following questions.

\begin{prob}\label{prob:remaining-q-cardinals}\hfill
    \begin{enumerate}[label=(\arabic*)]
        \item Is $\mathfrak q_1\leq \mathfrak{ap}$?
        \item Is $\mathfrak{ap}\leq \mathfrak q_1$?
        \item Is $q_{2\Delta}=\mathfrak q_{2\frac{1}{2}}$?
        \item Is $q_{2\Delta}=\mathfrak q_2$?
    \end{enumerate}
\end{prob}
\subsection*{Acknowledgements}
This study was financed by the São Paulo Research Foundation (FAPESP), Brazil.
Grant number:~\mbox{2025/07302-0}.

\subsection*{Declaration of generative AI and AI-assisted technologies}
During the preparation of this work, the author used OpenAI's GPT-6 Astra for grammar revision, proofreading, checking the mathematical arguments, brainstorming, generating first drafts of mathematical proofs and obtaining suggestions and insights. The author reviewed and edited the AI-assisted output as needed and takes full responsibility for the content of the published article.
During the preparation of the first version of this work, the author used OpenAI's GPT-5.5 for the same purposes.